\documentclass[12pt]{amsart}

\usepackage{pdfsync}         % pdf inverse search
\usepackage[active]{srcltx}  % dvi inverse search
\usepackage{tikz}
\usepackage{enumerate}
\usepackage{comment}
\usepackage{amssymb}
\usepackage{subcaption}
\usepackage{mathtools}

\usepackage[OT1]{fontenc}    %
\usepackage{graphicx}

%\usepackage[notref,notcite]{showkeys}  % comment this to get rid of
                                       % the labels on margin

%\usepackage{refcheck}

\usepackage{palatino}
\usepackage[left=3cm,top=3.8cm,right=3cm]{geometry}        % for a good layout

\usepackage{xcolor}
\usepackage[pagebackref]{hyperref}
\hypersetup{
   colorlinks,
    linkcolor={red!60!black},
    citecolor={blue!60!black},
    urlcolor={blue!90!black}
}

\numberwithin{equation}{section}

\theoremstyle{plain}
\newtheorem{theorem}{Theorem}[section]
\newtheorem{corollary}[theorem]{Corollary}
\newtheorem{prop}[theorem]{Proposition}
\newtheorem{lemma}[theorem]{Lemma}

\theoremstyle{remark}
\newtheorem{remark}[theorem]{Remark}

\theoremstyle{definition}
\newtheorem{definition}[theorem]{Definition}

\newcommand{\leb}{\mathcal{L}}

\newcommand{\e}{\varepsilon}
\newcommand{\N}{\mathbb{N}}
\newcommand{\R}{\mathbb{R}}

\newcommand{\dist}{\mathrm{dist}}
\newcommand{\ol}{\overline}

\newcommand{\cP}{\mathcal{P}}
\newcommand{\cA}{\mathcal{A}}
\newcommand{\cM}{\mathcal{M}}
\newcommand{\cL}{\mathcal{L}}

\newcommand{\cN}{\mathcal{N}}
\newcommand{\cD}{\mathcal{D}}
\newcommand{\cG}{\mathcal{G}}

\newcommand{\cI}{\mathcal{I}}
\newcommand{\cE}{\mathcal{E}}
\newcommand{\cF}{\mathcal{F}}

\newcommand{\cR}{\mathcal{R}}
\newcommand{\cT}{\mathcal{T}}
\newcommand{\cV}{\mathcal{V}}

\DeclareMathOperator{\dir}{dir}

\DeclareMathOperator{\hdim}{dim_H}
\DeclareMathOperator{\pdim}{dim_P}

\DeclareMathOperator{\supp}{supp}

\DeclareMathOperator{\bad}{\mathbf{Bad}}

\newcommand{\wh}{\widehat}
\newcommand{\wt}{\widetilde}

\title{A nonlinear version of Bourgain's projection theorem}

\author{Pablo Shmerkin}

\address{Department of Mathematics, the University of British Columbia, Canada}
\email{pshmerkin@math.ubc.ca}
\urladdr{http://pabloshmerkin.org}
\thanks{PS has received funding from a University of St Andrews Global Fellowship and from the European Research Council (ERC) under the European Union's Horizon 2020 research and innovation programme (grant agreement No. 803711)}

\subjclass[2010]{Primary: 28A75, 28A80; Secondary: 05D99, 26A16, 49Q15}
\keywords{projection theorems, distance sets, pinned distance sets, incidences, Hausdorff dimension, patterns, Lipschitz functions}

\begin{document}

\begin{abstract}
We prove a version of Bourgain's projection theorem for parametrized families of $C^2$ maps, that refines the original statement even in the linear case by requiring non-concentration only at a single natural scale. As one application, we show that if $A$ is a Borel set of Hausdorff dimension close to $1$ in $\mathbb{R}^2$ or close to $3/2$ in $\mathbb{R}^3$, then for $y\in A$ outside of a very sparse set,  the pinned distance set $\{|x-y|:x\in A\}$ has Hausdorff dimension at least $1/2+c$, where $c$ is universal. Furthermore, the same holds if the distances are taken with respect to a $C^2$ norm of positive Gaussian curvature. As further applications, we obtain new bounds on the dimensions of spherical projections, and an improvement over the trivial estimate for incidences between $\delta$-balls and $\delta$-neighborhoods of curves in the plane, under fairly general assumptions. The proofs depend on a new multiscale decomposition of measures into ``Frostman pieces'' that may be of independent interest.
\end{abstract}

\dedicatory{Dedicated to the memory of Jean Bourgain}

\maketitle

\tableofcontents

\section{Introduction and statement of results}

\subsection{Distance sets}

The Falconer distance set conjecture, originating in \cite{Falconer85}, asserts that if $A\subset\R^d$, $d\ge 2$, is a Borel set of Hausdorff dimension $d/2$, then the \emph{distance set}
\[
\Delta(A) =\{|x-y|:x,y\in A\} \subset [0,\infty)
\]
has Hausdorff dimension $1$. Despite efforts by many mathematicians, the conjecture remains open in all dimensions.  A variant of this problem has also received much attention. From now on, all sets are assumed to be Borel.  Given $y\in\R^d, A\subset\R^d$, denote the \emph{pinned distance set} by
\[
\Delta^y(A)=\{ |x-y|:x\in A\}.
\]
Let $\hdim$ denote Hausdorff dimension.  It is possible that the pinned version of Falconer's conjecture holds, that is, if $\hdim(A)\ge d/2$, then there is $y\in A$ such that $\hdim(\Delta^y(A))=1$.

Recent deep results \cite{GIOW20, DuZhang19, Liu19} imply that if $\hdim(A)>\alpha(d)$, then there is $y\in A$ such that $\Delta^y(A)$ has positive Lebesgue measure, where
\begin{equation} \label{eq:alpha-d}
\alpha(d) = \left\{ \begin{array}{ccc}
                 \frac{5}{4} & \text{ if } & d=2 \\
                 \frac{d^2}{2d-1} & \text{ if } & d\ge 3
               \end{array}
 \right..
\end{equation}
In recent years, substantial progress has been achieved in the plane under the assumption $\hdim(A)>1$ \cite{Orponen17,Shmerkin17,Shmerkin21, KeletiShmerkin19,Shmerkin19, GIOW20, Liu20}. For example, it is known \cite{Shmerkin19} that if $A$ is a planar set of equal Hausdorff and packing dimension, and this common value is $>1$, then there is $y\in A$ such that $\hdim(\Delta^y(A))=1$. For general planar sets of Hausdorff dimension $>1$, it is known that there is $y\in A$ such that $\hdim(\Delta^y(A))\ge 29/42$ \cite{Shmerkin21} and $\pdim(\Delta^y(A))> 0.933$ \cite{KeletiShmerkin19}, where $\pdim$ denotes packing dimension. In all these works it is crucial that $\hdim(A)>1$; the methods break down if one only assumes that $\hdim(A)=1$.

For general ambient dimension, under the hypothesis $\hdim(A)=d/2$, Falconer proved in his original paper \cite{Falconer85} that $\hdim(\Delta(A))\ge 1/2$. This bound turned out to be quite hard to improve upon. By combining results of N.~Katz-T.~Tao \cite{KatzTao01} and J.~Bourgain \cite{Bourgain03}, it follows that if $A$ is a Borel planar set with $\hdim(A)\ge 1$, then
\begin{equation} \label{eq:katz-tao-bourgain}
\hdim(\Delta(A))\ge 1/2 +c,
\end{equation}
where $c$ is a small universal constant. We discuss the result of Katz-Tao and Bourgain in more detail below.

Less attention has been given to the problem of obtaining lower bounds for the dimension of distance sets when $\hdim(A)<d/2$. To our knowledge, the best such bound appearing in the literature was proved in Falconer's paper \cite{Falconer85}: if $\hdim(A)\le d/2$, then
\begin{equation} \label{eq:falconer-dim-less-d-over-2}
\hdim(\Delta(A)) \ge \hdim(A) - \frac{d-1}{2}.
\end{equation}
Note that this is vacuous if $\hdim(A)\le (d-1)/2$.

In this paper we improve upon several of the previously mentioned results, particularly in dimensions $2$ and $3$.
\begin{theorem} \label{thm:main-distance-sets}
Let $d\ge 2$. Given $d-2<\kappa,\alpha<d$, there is a number $c=c_d(\alpha,\kappa)>0$, depending continuously on $\alpha,\kappa$, such that the following holds.

Let $A\subset\R^d$ be a Borel set with $\hdim(A)\ge \alpha$. Then
\[
\hdim\left( \left\{y\in\R^d : \hdim(\Delta^y(A))< \frac{\alpha}{d}+c\right\}\right) \le \kappa.
\]
Moreover, the same holds if the pinned distance set is defined with respect to any $C^2$ norm whose unit ball has everywhere positive Gaussian curvature (with $c$ independent of the choice of norm).
\end{theorem}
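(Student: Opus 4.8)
The plan is to deduce Theorem~\ref{thm:main-distance-sets} from the nonlinear projection theorem that is the main result of this paper, applied to the family of \emph{distance maps} $\pi_y\colon x\mapsto |x-y|$ (respectively $x\mapsto \|x-y\|$ for a $C^2$ norm $\|\cdot\|$), viewed as a $C^2$ family of maps $\R^d\to\R$ parametrized by the pin $y$. Fix $d-2<\kappa,\alpha<d$ and suppose, towards a contradiction, that for the value $c=c_d(\alpha,\kappa)$ we are about to produce, the ``bad'' set $B=\{y:\hdim(\Delta^y(A))<\alpha/d+c\}$ satisfies $\hdim(B)>\kappa$. By Frostman's lemma, choose compactly supported probability measures $\mu$ on (a compact subset of) $A$ and $\nu$ on $B$ with $\mu(B(x,r))\lesssim r^{\alpha-\e}$ and $\nu(B(y,r))\lesssim r^{\kappa'}$ for some $\e>0$ small and some $\kappa'$ with $d-2<\kappa<\kappa'$. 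Decomposing $\mu$ and $\nu$ into countably many pieces and pigeonholing, we may further assume that $\supp\mu$ and $\supp\nu$ lie in disjoint balls at distance $\asymp 1$ from each other. Since $\supp\big((\pi_y)_*\mu\big)\subseteq\pi_y(\supp\mu)=\Delta^y(\supp\mu)\subseteq\Delta^y(A)$, it then suffices to show that for $\nu$-a.e.\ $y$ one has $\hdim\big((\pi_y)_*\mu\big)\ge \alpha/d+c$.

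The second step is to verify that $\{\pi_y\}_{y\in\supp\nu}$ satisfies, on $\supp\mu$, the non-degeneracy and non-concentration hypotheses required by the nonlinear projection theorem. Because $\supp\mu$ and $\supp\nu$ are separated, $(x,y)\mapsto|x-y|$ (or $\|x-y\|$) is $C^2$ with uniformly bounded derivatives in the relevant region. The essential point is the behaviour of the ``direction map'' $x\mapsto\nabla_x\pi_y(x)$, which for the Euclidean norm equals $(x-y)/|x-y|\in S^{d-1}$: for fixed $x$, the assignment $y\mapsto\nabla_x\pi_y(x)$ is, up to reparametrization, radial projection from $x$, hence a submersion onto $S^{d-1}$. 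For a $C^2$ norm of positive Gaussian curvature, $\nabla_x\pi_y(x)=(\nabla\|\cdot\|)\big((x-y)/\|x-y\|\big)$ lies on the dual unit sphere, and the curvature hypothesis is exactly what makes the relevant Gauss map a diffeomorphism, so $y\mapsto\nabla_x\pi_y(x)$ is again a submersion; one checks the associated constants can be taken uniform over such norms after a normalization, which is what yields a $c$ independent of the norm. This is precisely where the restriction $\kappa>d-2$ is used: a set of pins of Hausdorff dimension $>d-2$ cannot concentrate near a subvariety of $\R^d$ of codimension $2$, so its image under the submersion $y\mapsto\nabla_x\pi_y(x)$ cannot concentrate near a subvariety of $S^{d-1}$ of codimension $1$ — and this non-concentration of the family of linearized projections is exactly the input that the (nonlinear) Bourgain argument needs.

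With the hypotheses verified, an application of the nonlinear projection theorem produces a constant $c=c_d(\alpha,\kappa)>0$, depending continuously on $\alpha$ and $\kappa$ (the continuity being part of that theorem's conclusion, and the quantitative bound on the exceptional set being the refinement of Bourgain's original statement), such that it is \emph{impossible} for $(\pi_y)_*\mu$ to have Hausdorff dimension $<\alpha/d+c$ on a set of $y$ of positive $\nu$-measure. This contradicts the fact that $\nu$ is carried by the bad set $B$, so in fact $\hdim(B)\le\kappa$. A routine limiting argument — letting $\e\downarrow 0$ and using the continuity of $c_d$ in $\alpha$ — upgrades this from Frostman measures of exponent $\alpha-\e$ to the hypothesis $\hdim(A)=\alpha$, completing the proof. (The multiscale decomposition of $\mu$ into ``Frostman pieces'' of well-defined local dimensions, alluded to in the abstract, is what makes the projection theorem applicable to an arbitrary Borel set $A$ with no regularity or self-similarity assumptions; this decomposition is developed and used inside the proof of that theorem rather than here.)

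The main obstacle is the second step carried out \emph{quantitatively and uniformly}: one must confirm that the distance and norm maps meet the precise hypotheses of the projection theorem with constants that do not degenerate over the relevant scales and positions, and — in the norm case — over all admissible norms, so that the output $c$ depends only on $d,\alpha,\kappa$. By contrast, the appearance of $\alpha/d$ and the restriction $\alpha<d$ are not difficulties but structural limitations inherited from the projection theorem, which only guarantees a fixed $\e$-improvement over the trivial exponent $\alpha/d$ and says nothing as $\alpha\to d$. The remaining ingredients — Frostman's lemma, the reduction to separated supports, and the passage back and forth between statements about sets and about measures — are standard.
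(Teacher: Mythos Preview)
Your overall architecture is right --- one applies Theorem~\ref{thm:main-continuous} to the family $F_y(x)=|x-y|$, for which $\theta_x(y)=\dir\nabla_x F_y(x)=e_x(y)$ is the spherical projection. The gap is in the second step, where you claim that the Frostman exponent $\kappa'>d-2$ for $\nu$ is enough to guarantee the hyperplane non-concentration \eqref{eq:main-thm-cont-projective-decay} for $\theta_x\nu$. Your dimensional bookkeeping is off: the preimage under $e_x$ of a great-sphere neighborhood $H^{(r)}\cap S^{d-1}$ (with $H\in\mathbb{G}(d,d-1)$) is the neighborhood $(x+H)^{(O(r))}$ of an affine \emph{hyperplane}, which has codimension $1$ in $\R^d$, not $2$. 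A ball-Frostman exponent $>d-2$ says nothing about $\nu$-mass of hyperplane neighborhoods; indeed, $\nu$ could be supported on a hyperplane through $x$, in which case $e_x\nu$ is concentrated on a single great sphere. Even in the plane ($d=2$, where your condition is just $\kappa'>0$), a Frostman measure can give large mass to thin angular sectors at $x$, so $e_x\nu$ need not satisfy any Frostman bound on $S^1$. Being a submersion is not enough here: submersions preserve codimension of preimages, so codimension $1$ in $S^{d-1}$ pulls back to codimension $1$ in $\R^d$.

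The paper closes this gap with a genuine additional ingredient. It splits into two cases. If $\nu$ charges some hyperplane $P$, one restricts $\nu$ to $P$ and observes that for $x\notin P$ the map $e_x|_P:P\to S^{d-1}$ is bi-Lipschitz, so $e_x\nu$ inherits a ball-Frostman exponent $\kappa$; covering $H^{(r)}\cap S^{d-1}$ by $\lesssim r^{-(d-2)}$ balls then yields \eqref{eq:main-thm-cont-projective-decay} with exponent $\kappa-(d-2)>0$ --- this is where $\kappa>d-2$ actually enters. If $\nu$ gives zero mass to every hyperplane, one invokes Theorem~\ref{thm:radial-positive-dim} (an extension of Orponen's spherical projection theorem, proved in Appendix~\ref{sec:spherical-projections}), which produces for $\mu$-most $x$ a set $K(x)$ with $\nu(K(x))\ge 1/2$ such that $e_x\nu_{K(x)}$ satisfies \eqref{eq:main-thm-cont-projective-decay} with some exponent $\kappa'(\alpha,\kappa)>0$. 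The hypotheses of Theorem~\ref{thm:radial-positive-dim} require decay of both $\mu$ and $\nu$ near $(d-2)$-planes, and this is where \emph{both} restrictions $\alpha>d-2$ and $\kappa>d-2$ are used. This spherical-projection step is not a routine verification; it is a substantial lemma that your sketch omits.
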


In the plane, this result improves upon the bound \eqref{eq:katz-tao-bourgain} of Katz-Tao and Bourgain in several ways: (a) it provides a pinned version, (b) furthermore, not only does the pinned version hold for some $y\in A$, but in fact it holds for $y$ outside of a set of dimension $\kappa$, arbitrarily small (with the gain $c$ depending on $\kappa$) - this is new even when $\hdim(A)>1$, (c) it works for more general smooth, curved norms, (d) it extends to values of $\hdim(A)\in (0,1)$. Moreover, Theorem \ref{thm:main-distance-sets} provides an improvement over the bound $\hdim(\Delta(A))\ge 1/2$ for Borel sets of dimension $3/2$ in $\R^3$, and also for Borel sets of dimension $>2$ in $\R^4$. In dimensions $\ge 5$, the value $\alpha(d)$ from \eqref{eq:alpha-d} is smaller than $d-2$, so Theorem \ref{thm:main-distance-sets} becomes far less interesting. We note that a careful inspection of the proofs in \cite{KatzTao01}, combined with the discretized sum-product theorem of \cite{Bourgain03}, might lead to the analogue of \eqref{eq:katz-tao-bourgain} also for sets of dimension in $(0,1)$ and perhaps also in higher dimensions. We believe that the improvements (a)--(c) noted above do not follow in any direct way from previous approaches.

\begin{remark}
In dimensions $d\ge 3$, at least one of the assumptions $\kappa>d-2$, $\alpha>d-2$ in Theorem \ref{thm:main-distance-sets} is necessary, as can be seen from the examples $X=S^1\times \{0\}\subset \R^2\times\R^{d-2}$ and $X=S^{d-2}\times \{0\}\subset\R^{d-1}\times\R$. However, such assumptions may not be necessary if one only considers $y\in X$.
\end{remark}

\subsection{A non-linear version of Bourgain's projection theorem}

There are well known connections between many important problems at the interface of analysis and geometric measure theory, such as the Kakeya, Furstenberg set, discretized sum-product, discretized projection and Falconer distance set problems. Indeed, all of these problems to some extent deal with incidences between tubes. However, the connections, even when they are explicit, are rarely straightforward. In \cite{KatzTao01}, Katz and Tao introduced discretized versions of three conjectures which were at the time open. Two of the conjectures were: (a) $\hdim(\Delta(A))\ge 1/2+c$ if $A$ is a planar Borel set with $\hdim(A)=1$, (b) there is no Borel subring of the reals of Hausdorff dimension $1/2$. Note that (a)$\Rightarrow$(b): if $R$ is a ring of dimension $1/2$, then $\Delta(R\times R)\subset \sqrt{R_{\ge 0}}$. Among other things, Katz and Tao proved that certain discretized versions of these conjectures are equivalent, and that the discretized version of \eqref{eq:katz-tao-bourgain} (which is rather involved) implies the actual bound \eqref{eq:katz-tao-bourgain}.

In \cite{Bourgain03}, Bourgain proved the discretized version of the ring conjecture, which is nowadays known as the discretized sum-product theorem. Hence, in combination with \cite{KatzTao01}, this established the bound \eqref{eq:katz-tao-bourgain}. A few years later, in \cite{Bourgain10}, Bourgain refined the discretized sum-product theorem to obtain what is now known as Bourgain's (discretized) projection theorem. We recall this theorem below. (In fact, many of the ideas to go from sum-product to projections are already implicit in \cite{KatzTao01, Bourgain03}.) Thus, there is a known path from Bourgain's projection theorem to the estimate \eqref{eq:katz-tao-bourgain}. In this article, we take a rather different path that we believe is more flexible, and can be used to make progress on other problems in combinatorial fractal geometry. We view the maps $\Delta^y(x)=|x-y|$ as a family of maps from $\R^d\to \R$, parametrized by the point $y$. These maps are smooth if we are careful to separate the domains of the $x$ and the $y$, but they are nonlinear. In \cite{KatzTao01}, an important step in the overall argument is applying a projective transformation that linearizes a family of projections. This argument seems to be rather constrained. The approach of this paper is to develop a \emph{non-linear} version of Bourgain's projection theorem. In doing so, we also obtain some new insights even in the linear case.

We state first a continuous (as opposed to single-scale) version of our main result. We denote the open $r$-neighborhood of $H$ by $H^{(r)}$. If $\mu$ is a Borel measure on a metric space $X$ and $g:X\to Y$ is a Borel map, then we denote the push-forward measure to $Y$ by $g\mu$, that is, $g\mu(\cdot)=\mu(g^{-1}\cdot)$.  If $\mu(A)>0$, then $\mu_A$ denotes the normalized restriction $\mu(A)^{-1}\mu|_A$.
Given $x\in \R^d, x\neq 0$, we let $\dir(x)=x/|x|\in S^{d-1}$. Finally, we denote the Grassmanian of linear $k$-planes in $\R^d$ by $\mathbb{G}(d,k)$.
\begin{theorem} \label{thm:main-continuous}
Given $\kappa>0, 0<\alpha<d$ there is $\eta=\eta_d(\kappa,\alpha)>0$ (that can be taken continuous in $\kappa,\alpha$) such that the following holds.

Let $(\Lambda,\nu)$ be a Borel probability space and let $U\subset\R^d$ be an open domain. Let $F:\Lambda \times U\to\R$ be a Borel function such that, for each $\lambda\in\Lambda$, the map $F_\lambda(x)=F(\lambda,x):U\to\R$ is $C^2$ without singular points. For each $x\in \R^d$  define the map
\[
\theta_x(\lambda) = \dir(\nabla F_\lambda(x)) \in S^{d-1}.
\]
Let $A\subset U$ be a Borel set of dimension $\ge \alpha$ such that for all $x\in A$ there are a set $\Lambda_x$ with $\nu(\Lambda_x)>0$ and a number $C_x>0$ satisfying
\begin{equation} \label{eq:main-thm-cont-projective-decay}
\theta_x\nu_{\Lambda_x}\left(H^{(r)}\right)   \le C_x \, r^\kappa \quad\text{for all }H\in \mathbb{G}(d,d-1), r\in (0,1].
\end{equation}
Then there is $\lambda\in\supp(\nu)$ such that
\[
\hdim(F_\lambda A) \ge \frac{\alpha}{d}+\eta.
\]
\end{theorem}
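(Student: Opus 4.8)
The plan is to deduce this continuous statement from a single‑scale, discretized projection estimate --- which itself reduces, by $C^2$‑linearization, to a quantitative form of Bourgain's discretized projection theorem --- run along a carefully chosen sequence of scales and assembled through a multiscale argument based on a decomposition of measures into ``Frostman pieces''. First I would pass to a measure: by Frostman's lemma there is a Borel probability measure $\mu$ on $A$ with $\mu(B(x,r))\le r^{\alpha'}$ for all $x,r$ and $\alpha'<\alpha$ arbitrarily close; since $\eta$ is continuous in $\alpha$, it suffices to work with $\mu$, $\alpha'$, and $F_\lambda\mu$ in place of $F_\lambda A$. Routine pigeonholings let us also assume, after restricting $\mu$ to a set of positive measure and $\Lambda$ to a compact subset on which (by the regularity of the family) $\sup_\lambda\|F_\lambda\|_{C^2}<\infty$ and $\inf_{x,\lambda}|\nabla F_\lambda(x)|>0$ --- so $\theta_x(\lambda)$ is Lipschitz in $x$ uniformly in $\lambda$ --- that $C_x\le C$ and $\nu(\Lambda_x)\ge\rho_1>0$ on $\supp\mu$. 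By the mass distribution principle it is then enough to produce $\lambda\in\supp\nu$ and $A_\lambda$ with $\mu(A_\lambda)>0$ such that $F_\lambda(\mu|_{A_\lambda})$ obeys a Frostman bound with exponent $\alpha'/d+\eta$.

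The technical core is a single‑scale statement of the following shape: given $\kappa,\alpha'>0$ there are $\eta_0=\eta_0(\alpha',\kappa)>0$ and $\epsilon_0>0$ so that, for all small $\delta$, every $x_0\in A$, and every probability measure $\mu'$ on a ball $B(x_0,r)$ with $r$ small relative to $\delta$ whose $r^{-1}$‑rescaling to the unit ball is $(\delta,\beta)$‑Frostman (meaning $\lesssim\rho^{\beta}$ on $\rho$‑balls for $\rho\in[\delta,1]$), the set of $\lambda$ for which $H_{r\delta}(F_\lambda\mu')-H_{r}(F_\lambda\mu')<(\beta/d+\eta_0)\log(1/\delta)$ has $\nu_{\Lambda_{x_0}}$‑measure at most $\delta^{\epsilon_0}$ (here $H_\rho$ is entropy at scale $\rho$). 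This is proved by linearization: on $B(x_0,r)$ the $r^{-1}$‑rescaling of $F_\lambda$ is $C^1$‑within $O(r)$ of $y\mapsto|\nabla F_\lambda(x_0)|\,\langle\theta_{x_0}(\lambda),y\rangle$, so at the relative scale $\delta\gg r$ it agrees, up to negligible error, with the orthogonal projection onto $\R\,\theta_{x_0}(\lambda)$; since $\theta_{x_0}\nu_{\Lambda_{x_0}}$ satisfies the non‑concentration \eqref{eq:main-thm-cont-projective-decay}, one invokes Bourgain's discretized projection theorem in the form asserting that the set of directions failing the expected entropy gain is polynomially small (in the linear case this rests on \cite{Bourgain10,KatzTao01}).

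The multiscale assembly is where the ``Frostman piece'' decomposition of $\mu$ is used: after passing to one such piece one may assume that, along a prescribed (in general non‑uniform) sequence of zoom factors $\delta_k=2^{-m_k}$ with $m_k$ growing slowly and $\sum_k\delta_k^{\epsilon_0}<\infty$, the conditional measures of $\mu$ on the nested cubes are, at all but an $\epsilon$‑fraction of the steps (weighted by the $m_k$), honestly $(\delta_k,\beta_k)$‑Frostman after rescaling, with the $m_k$‑weighted partial averages of the $\beta_k$ all $\ge\alpha'-\epsilon$. I would then process the steps one at a time, maintaining for $\mu$‑typical $x$ a nested family $\Lambda_x\supset\Lambda_x^{(1)}\supset\Lambda_x^{(2)}\supset\cdots$, where $\Lambda_x^{(k)}$ is got from $\Lambda_x^{(k-1)}$ by deleting the $\lambda$ that fail the single‑scale gain at step $k$ for $x$'s current atom --- a set of $\nu_{\Lambda_x^{(k-1)}}$‑measure $\le\delta_k^{\epsilon_0}$. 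Choosing $\sum_k\delta_k^{\epsilon_0}<\infty$ makes everything fit: $\prod_k(1-\delta_k^{\epsilon_0})>0$, so $\nu(\Lambda_x^{(\infty)})\ge c\,\rho_1>0$; the non‑concentration constant of $\theta_x\nu_{\Lambda_x^{(k)}}$ stays uniformly bounded (passing to a $(1-\rho)$‑proportion of the mass worsens it only by $(1-\rho)^{-1}$); the linearization error at step $k$, of relative size $O(\prod_{j<k}\delta_j)$, is negligible against $\delta_k$ once $m_k$ grows, say, linearly; and then the per‑step gain $\eta_0 m_k\log 2$ accumulates at a fixed positive ``density''. Taking $\lambda\in\supp\nu$ with $\mu(A_\lambda)>0$ for $A_\lambda:=\{x:\lambda\in\Lambda_x^{(\infty)}\}$ (Fubini), the pushforward $F_\lambda(\mu|_{A_\lambda})$ enjoys the single‑scale gain at every good step; telescoping these gains against the baseline recorded by the profile $(\beta_k)$ --- the few discarded steps contributing $\ge0$, the good steps taken of bounded index‑gap --- yields the desired Frostman bound for $F_\lambda(\mu|_{A_\lambda})$ with exponent $\alpha'/d+\eta$, $\eta$ a fixed fraction of $\eta_0$, and the mass distribution principle concludes.

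The weight of the argument rests on two points. The first is the single‑scale estimate: the linearization is easy, but it must be fed a form of Bourgain's projection theorem quantitative enough to bound the \emph{measure} of the exceptional parameters, not merely to assert that one good parameter exists --- this is the one place where the hard discretized sum‑product / projection theory is invoked, and it is also where the refinements ``even in the linear case'' live. The second, and the genuinely new ingredient, is the ``Frostman piece'' decomposition together with the bookkeeping around it: one must turn a measure carrying only a global Frostman bound into one whose rescaled conditionals are \emph{honestly} Frostman at almost every step of a prescribed non‑uniform sequence, with a controlled exponent profile, compatibly with the scale‑by‑scale thinning of the parameter sets $\Lambda_x$ --- which is delicate precisely because the non‑concentration data is attached to point‑dependent sets $\Lambda_x$ of merely positive $\nu$‑mass. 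By comparison, the $C^2$ error analysis and the entropy telescoping are routine.
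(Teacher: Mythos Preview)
Your overall architecture---Frostman measure, linearize on small cubes, apply a quantitative Bourgain-type estimate at each scale, control the exceptional $\lambda$'s by a Borel--Cantelli/Fubini argument, and telescope entropies---is the same as the paper's. But your single-scale statement as written is false, and patching it is exactly where the paper's main new work lies.

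You claim that for a $(\delta,\beta)$-Frostman rescaled conditional $\mu'$, outside a $\delta^{\epsilon_0}$-set of $\lambda$ one has $H_{r\delta}(F_\lambda\mu')-H_r(F_\lambda\mu')\ge(\beta/d+\eta_0)\log(1/\delta)$ with $\eta_0=\eta_0(\alpha',\kappa)$ \emph{uniform in} $\beta$. This cannot hold: if $\beta$ is close to $d$ the entropy of any projection is at most $\log(1/\delta)$, strictly less than $(\beta/d+\eta_0)\log(1/\delta)$; and if $\beta$ is close to $0$, Bourgain's theorem gives no gain either (its $\eta$ depends on $\beta$ and degenerates at the endpoints). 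So in your telescoping, the ``discarded'' steps are not just the $\epsilon$-fraction where the conditional fails to be Frostman---they must also include all steps with $\beta_k$ near $0$ or $d$, and at those steps ``contributing $\ge 0$'' is not enough: you lose the baseline $\beta_k/d$ there, which can swamp the gain if such steps predominate.

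The paper closes this gap with two ingredients you have not supplied. First, a combined projection estimate (its Theorem~\ref{thm:projection-combined}) that replaces $\beta/d+\eta_0$ by $\gamma(\beta)$, where $\gamma(\beta)\ge\beta/d-O(\delta)$ always (via Kaufman and Falconer type bounds for the extreme regimes) and $\gamma(\beta)\ge\beta/d+\eta$ only when $\beta\in[\xi,d-\xi]$. Second---and this is the heart of the new ``Frostman piece'' decomposition (Theorem~\ref{thm:multiscale-decomposition})---a proof that after regularizing $\mu$ one can choose the scale sequence so that a \emph{fixed positive proportion} $\xi$ of the total depth has $\beta_k\in[\xi,d-\xi]$. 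This last point is not automatic from any Frostman condition on $\mu$ and requires the combinatorial Lipschitz-function argument of \S\ref{sec:multiscale-decompositions}; your proposal asserts the needed ``fixed positive density'' but gives no mechanism for it. With these two pieces the telescoping becomes $\sum_j\gamma(\beta_j)m_j\ge(\alpha'/d+\eta\xi-O(\epsilon))m$, which is the actual source of the gain.
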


We make some remarks on this statement.
\begin{remark}
Bourgain's (continuous) projection theorem corresponds precisely to the special case in which $\Lambda=\Lambda_x=S^{d-1}$ and $F_\lambda(x)=\langle \lambda, x\rangle$ is orthogonal projection in direction $\lambda$. In this case, $\theta_x$ is the identity map for all $x$, and so the decay condition \eqref{eq:main-thm-cont-projective-decay} has to be satisfied by the measure $\nu$ on $S^{d-1}$.
\end{remark}

\begin{remark}
Allowing the set $\Lambda_x$ to depend on $x$ is important in our applications, such as Theorem \ref{thm:main-distance-sets}. By a formal argument, in the case $\Lambda_x\equiv \Lambda$ for all $x$, the conclusion holds for $\nu$-almost all $\lambda$.
\end{remark}

\begin{remark}
It is enough that \eqref{eq:main-thm-cont-projective-decay} holds for all $x\in A$ outside of a set $E$ of dimension $<\alpha$, since we can then apply the theorem to $A\setminus E$.
\end{remark}

Bourgain's projection theorem described above is deduced from a discretized version, that is fully stated as Theorem \ref{thm:projection-Bourgain} below. It is often this discretized version that gets used in applications, such as \cite{BFLM11, KatzZahl19}. Correspondingly, we have the following discretized version of Theorem \ref{thm:main-continuous}. The number of dyadic cubes of side length $2^{-m}$ hitting a set $X$ is denoted by $\cN(X,m)$, and $|\cdot|$ refers to Lebesgue measure.

\begin{theorem} \label{thm:main-discrete}
Fix $d\ge 2$. Given $\kappa>0$, there is $\eta=\eta_d(\kappa)>0$ such that the following holds for $\e<\e_d(\kappa)$.

Let $X\subset [0,1]^d$ be a union of $2^{-m}$-dyadic cubes.  Let $U$ be a neighborhood of $X$ and let $(\Lambda,\nu)$ be a Borel probability space. Let $F:\Lambda \times U\to\R$ be a Borel function such that, for each $\lambda\in\Lambda$, the map $F_\lambda(x)=F(\lambda,x):U\to\R$ is $C^2$ without singular points. For each $x\in \R^d$  define the map
\[
\theta_x(\lambda) = \dir(\nabla F_\lambda(x)) \in S^{d-1}.
\]
We assume that $C_\Lambda=\sup_\lambda\|F_\lambda\|_{C^2}<\infty$ and $c_\Lambda=\inf_\lambda \inf_{x\in X}|\nabla F_\lambda(x)|>0$.

Let $m$ be large enough in terms of $c_\Lambda, C_\Lambda$ and all the previous parameters. Suppose $X$ satisfies the single-scale non-concentration condition
\begin{equation} \label{eq:hyp-non-concentration}
|X\cap B(x,|X|^{1/d})| \le 2^{-\kappa m}|X| \quad\text{for all }x.
\end{equation}
For each $x\in X$  define the map
\[
\theta_x(\lambda) = \dir(\nabla F_\lambda(x)) \in S^{d-1}.
\]
Suppose that for every $x\in X$ there is a set $\Lambda_x\subset\Lambda$ with $\nu(\Lambda_x)\ge 2^{-\e m}$ such that
\begin{equation} \label{eq:hyp-circle-non-concentration}
\theta_x\nu_{\Lambda_x}\left(H^{(r)}\right) \le 2^{\e m}r^{\kappa}
\end{equation}
for all hyperplanes $H\in \mathbb{G}(d,d-1)$ and all $r\in [2^{-m},1]$.

Then there exist $\lambda\in \supp(\nu)$ and a set $X'\subset X$ with $|X'|\ge 2^{-2\e m}|X|$ such that
\[
\cN(F_\lambda X'',m) \ge 2^{\eta m} \cN(X,m)^{1/d}.
\]
for all sets $X''\subset X'$ with $|X''|\ge 2^{-\e m}|X'|$.

In the case in which $\Lambda_x=\Lambda$ for all $x$, one can take $X'=X$, and the conclusion holds for all $\lambda$ outside of a set of $\nu$-measure $\le 2^{-\e m}$.
\end{theorem}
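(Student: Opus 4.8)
Let $\mu$ be normalized Lebesgue measure on $X$ and write $\cN(X,m)=2^{sm}$. Note first that \eqref{eq:hyp-non-concentration} forces $s\le d-\kappa+o(1)$: covering $[0,1]^d$ by $\approx|X|^{-1}$ balls of radius $|X|^{1/d}$, one of them carries $X$-mass $\ge|X|^2$, whence $|X|\le 2^{-\kappa m}$. Next, it suffices to find $\lambda\in\supp(\nu)$ and $X'\subset X$ with $|X'|\ge 2^{-2\e m}|X|$ such that $F_\lambda\mu_{X'}(Y)<2^{-\e m}$ for every $Y\subset\R$ meeting at most $2^{\eta m}\cN(X,m)^{1/d}$ dyadic $2^{-m}$-intervals: indeed, for $X''\subset X'$ with $|X''|\ge 2^{-\e m}|X'|$ we then get $F_\lambda\mu_{X'}(F_\lambda X'')\ge\mu_{X'}(X'')\ge 2^{-\e m}$, so $\cN(F_\lambda X'',m)>2^{\eta m}\cN(X,m)^{1/d}$. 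The plan is to reduce to the case where $\mu_{X'}$ has controlled local dimension at every scale, and then to extract a dimension gain from a linearized, single-scale form of the statement built on Bourgain's discretized projection theorem \cite{Bourgain10} (a refinement of the discretized sum--product theorem \cite{Bourgain03}).

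\textbf{Multiscale decomposition.} The key tool is a decomposition of $\mu$ into ``Frostman pieces''. After passing to a subset $X'\subset X$ with $|X'|\ge 2^{-2\e m}|X|$ and losing only a subpolynomial factor in mass, $\mu_{X'}$ may be assumed to have a well-defined exponent profile: partition the scales $[2^{-m},1]$ into $n=n(\kappa,d)$ equal blocks, and arrange that for some $(t_1,\dots,t_n)\in[0,d]^n$, inside each dyadic cube opening the $i$-th block the renormalized measure satisfies a Frostman bound of exponent $t_i$ with constant $2^{o(m)}$ at every scale within that block. There are only $2^{o(m)}$ admissible profiles (the exponents live on a fixed grid), so a dominant one can be fixed; since the decomposition is tight, $t_i$ is essentially the branching rate of $\mu_{X'}$ on block $i$, so $\tfrac1n\sum_i t_i=s+o(1)$. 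Combined with $s\le d-\kappa$, this yields a set $I$ of $\gtrsim n$ blocks on which $t_i$ is bounded away from $d$ (and one also controls those on which it is bounded away from $0$). Homogeneity makes the target hereditary: any $X''\subset X'$ with $|X''|\ge 2^{-\e m}|X'|$ inherits essentially the same profile, so it suffices to argue for $X'$.

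\textbf{Extracting the gain.} The remaining point is a single-scale, linearized projection estimate applied along the blocks. Zooming into a dyadic cube $Q$ opening a block $i\in I$ and rescaling by its side length, $F_\lambda$ becomes a $C^2$ map whose second derivatives have been multiplied by that side length --- since $m$ is large in terms of $C_\Lambda,c_\Lambda$, it is affine at the resolution of block $i$ up to negligible error, with linear part pointing in direction $\theta_{x_Q}(\lambda)$ (the variation of $\theta_x(\lambda)$ over $Q$ is of the order of $\operatorname{diam}Q$ and is absorbed, and $c_\Lambda>0$ keeps this linear part non-degenerate). Thus on block $i$ the map is effectively orthogonal projection in direction $\theta_{x_Q}(\lambda)$, and Bourgain's discretized projection theorem applies to the rescaled Frostman piece (exponent $t_i$) with direction measure $\theta_{x_Q}\nu_{\Lambda_{x_Q}}$, whose hyperplane non-concentration is exactly \eqref{eq:hyp-circle-non-concentration}: for $\theta_{x_Q}\nu_{\Lambda_{x_Q}}$-most directions, equivalently $\nu_{\Lambda_{x_Q}}$-most $\lambda$, the pushforward on block $i$ has a branching exponent exceeding the trivial $t_i/d$ by $\eta_0=\eta_0(\kappa,d)>0$. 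Combining these block gains through the multiscale structure --- an $L^2$-type estimate organized as an induction on the number of blocks, which also handles near-diagonal collisions --- and comparing the resulting profile of $F_\lambda\mu_{X'}$ with $\cN(X,m)^{1/d}=2^{sm/d}$, one obtains $\cN(F_\lambda X'',m)\ge 2^{-o(m)}2^{sm/d}2^{\eta_0|I|m/n}\ge 2^{(s/d+c)m}$ with $c=c(\kappa,d)>0$. It remains to fix a single $\lambda$: when $\Lambda_x\equiv\Lambda$ a Fubini argument over $\lambda$ (bounding $\int_\Lambda\#\{Q:\lambda\text{ bad for }Q\}\,d\nu(\lambda)$ by $2^{(s+o(1))m}2^{-\e m}$) gives the conclusion for all $\lambda$ outside a set of $\nu$-measure $\le 2^{-\e m}$; when $\Lambda_x$ varies, the same pigeonholing, run inductively, produces one $\lambda$ together with the subset $X'$ on which it is good at enough cubes. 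Taking $\eta<c$, $\e$ small relative to $n$, and $m$ large completes the proof.

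\textbf{Main obstacle.} The crux is the multiscale bookkeeping in the last step: one must propagate \emph{both} non-concentration hypotheses --- the spreading of directions \eqref{eq:hyp-circle-non-concentration} and the Frostman non-concentration of $\mu_{X'}$ --- through all $\Theta(n)$ blocks, control the proliferation of exceptional parameters $\lambda$ and of exceptional cubes, ensure that the gain extracted at a block where $t_i$ is close to $d$ but $\mu_{X'}$ is non-solid genuinely materializes at the target resolution $2^{-m}$ (this is exactly the work done by the Frostman-piece decomposition that a single application of Bourgain's theorem cannot do), and above all guarantee that the $2^{\pm\e m}$ and $2^{\pm o(m)}$ errors incurred at each step do not accumulate past the total gain $2^{\eta_0|I|m/n}$ --- which forces $\e_d(\kappa)$ small relative to $\eta_0(\kappa,d)/d$ and the single-scale estimate to be applied in a form whose gain is uniform over the rescaled pieces. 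The $x$-dependence of $\Lambda_x$ is the second difficulty: it blocks a clean averaging over $\lambda$ and is precisely why the conclusion gives only one good $\lambda$ together with a good subset $X'$, rather than a statement for $\nu$-almost every $\lambda$.
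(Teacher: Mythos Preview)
Your overall strategy matches the paper's: regularize $\mu$, decompose scales into pieces on which the conditional measures are Frostman, linearize $F_\lambda$ on each piece, apply Bourgain's discretized projection theorem piece by piece, and combine. But there is a real gap at the crux of the argument.

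You partition the scales into $n$ equal blocks with Frostman exponents $t_1,\ldots,t_n$, observe that $\tfrac1n\sum_i t_i=s\le d-\kappa$, and deduce that a positive proportion of blocks have $t_i$ bounded away from $d$, adding parenthetically that ``one also controls those on which it is bounded away from $0$''. This is not enough. Bourgain's theorem yields a gain over the trivial bound $t_i/d$ only when $t_i$ is bounded away from \emph{both} $0$ and $d$ on the \emph{same} block. If every $t_i$ is close to either $0$ or $d$ --- say a fraction $p$ of blocks have $t_i\approx d$ and the rest have $t_i\approx 0$ --- then $s\approx pd$, and summing the best available projection exponents $\gamma(t_i)$ (with $\gamma(0)=0$, $\gamma(d)=1$) gives exactly $p=s/d$, i.e.\ no gain at all. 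Such ``staircase'' profiles are perfectly consistent with the constraint $\tfrac1n\sum t_i\le d-\kappa$ and with an equal-block decomposition. Ruling them out is precisely where the single-scale hypothesis \eqref{eq:hyp-non-concentration} must do work, and it is the paper's main technical innovation: one does \emph{not} use equal blocks, but rather finds variable-length intervals $[A_j,B_j]$ (with $B_j-A_j\le A_j$, which is also what linearization requires --- note your first block starts at scale $1$, where linearization fails) on which the conditional measures are Frostman, and then shows, via a combinatorial argument on $1$-Lipschitz functions, that the condition \eqref{eq:hyp-non-concentration} at the single scale $|X|^{1/d}$ forces a positive-density set of these intervals to carry an \emph{intermediate} exponent $\alpha_j\in[\xi,d-\xi]$. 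This is the content of Theorem~\ref{thm:multiscale-decomposition}\eqref{it:multiscale-iv} and Proposition~\ref{prop:superlinear-decomposition}, and it is not recoverable from the averaging argument you sketch.

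A secondary point: on the blocks where $t_i$ is near $0$ or near $d$, you still need $\gamma(t_i)\ge t_i/d-O(\delta)$ with $\delta$ independent of $t_i$, so as not to lose ground there; Bourgain's theorem alone does not give this (its $\delta$ blows up as $\alpha\to 0$ or $\alpha\to d$). The paper handles these regimes separately via quantitative Kaufman- and Falconer-type estimates, packaged together with Bourgain's theorem into a single ``combined projection theorem'' (Theorem~\ref{thm:projection-combined}). Finally, the combination across scales is carried out through an entropy multiscale formula (Proposition~\ref{prop:good-bound-from-below}) and the robust-measure framework, rather than an $L^2$ induction; this difference is more technical than conceptual, but your description of that step is too vague to assess.
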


Some remarks are in order.
\begin{remark}
Again, Bourgain's original theorem corresponds to $F_\lambda(x)=\langle \lambda,x\rangle$ and $\Lambda_x\equiv\Lambda$, see Theorem \ref{thm:projection-Bourgain} below. However, hypothesis \eqref{eq:hyp-non-concentration} is weaker than the non-concentration assumption in Theorem \ref{thm:projection-Bourgain}, since it is required at the single scale $|X|^{1/d}$. Furthermore, this is the natural scale that makes this assumption sharp: if $X$ is a cube of side length $|X|^{1/d}$ (or dense in such a cube, or the union of very few such cubes), then $|F(X)|\approx |X|^{1/d}$ for all smooth maps $F:\R^d\to\R$ without singular points (including projections). In short, \eqref{eq:hyp-non-concentration} is saying that $X$ is not concentrated in a few cubes, with the ``measure gain'' exponent $\eta$ depending on the quality of this non-concentration, given by $\kappa$. We remark that the theorem clearly fails if $X$ has measure close to either $1$ or to $2^{-dm}$; however, in both these cases $X$ is highly concentrated in a cube (of size $1$ or $2^{-m}$) and therefore these extremes are ruled out by \eqref{eq:hyp-non-concentration}.
\end{remark}

\begin{remark}
In the case $d=2$, the non-concentration assumption \eqref{eq:hyp-circle-non-concentration} is a standard Frostman (power law) assumption on the $\theta_x\nu_{\Lambda_x}$ measures of balls. For $d\ge 3$, the intersection $S^{d-1}\cap H$ is a maximal $(d-2)$-sphere in $S^{d-1}$, and \eqref{eq:hyp-circle-non-concentration} says that $\theta_x\nu_{\Lambda_x}$ is not concentrated near such sub-spheres. The example of a segment shows that, already for linear projections, such a condition is necessary.
\end{remark}

\begin{remark}
The ``gain''  $\eta$ in Bourgain's theorem is effective in principle, but extremely small. The value of $\eta$ in Theorem \ref{thm:main-discrete} is even smaller than that of Bourgain's projection theorem (as a matter of fact, it is equal for sets with some uniform decay such as Ahlfors-regular sets). We do not make the connection explicit, although it is not hard to extract it from the proofs, and we make no attempt at optimization since the values involved are in any event tiny. In the work in progress \cite{HLV20}, the authors provide explicit estimates for a closely related result involving entropy gain. Even though this does not automatically translate into an explicit value in the context of Theorem \ref{thm:projection-Bourgain}, it is plausible that with some additional work this will yield explicit estimates in Theorem \ref{thm:main-discrete}.
\end{remark}

\subsection{Strategy of proof and a multiscale decomposition intro Frostman pieces}

We now explain the overall strategy of the proof of Theorems \ref{thm:main-continuous} and \ref{thm:main-discrete}, and discuss informally a new multiscale decomposition of measures that is at the heart of the proof (see Section \ref{sec:multiscale-decompositions} and in particular Theorem \ref{thm:multiscale-decomposition} for precise statements).

Bourgain's original proof of Theorem \ref{thm:projection-Bourgain} appears to be intrinsically restricted to the linear setting. Rather than modifying the proof, we perform a regularization and multiscale decomposition of a Frostman measure $\mu$ on the given set $X$, and then linearize $F$ at every scale in the multiscale decomposition. We then apply Bourgain's Theorem (as a black box) to every small piece of $\mu$ and every scale in the multiscale decomposition.

The overall strategy is not new. In particular, we make use of a variant of a formula for estimating the entropy of smooth projections in terms of multiscale decompositions that goes back, in various forms, to \cite{HochmanShmerkin12, Hochman14, Orponen17, KeletiShmerkin19}. See Proposition \ref{prop:good-bound-from-below}. Because the precise formulation we need does not appear in the literature, we include a full proof in Appendix \ref{sec:entropy-of-projections}. Our approach is closest to that of \cite{KeletiShmerkin19} which, in addition to a multiscale decomposition, required an initial decomposition of the measure $\mu$ into ``regular Moran constructions''. The same decomposition plays a key role in this paper.

In order to apply Bourgain's projection theorem to small pieces of the measure, we need to verify that these small pieces satisfy the required non-concentration assumption. The main technical innovation of this paper is a new multiscale decomposition of a ``regular Moran measure'' so that the conditional measures on small cubes of certain sizes are Frostman measures up to a small exponential error (so they satisfy essentially the strongest possible non-concentration decay). By itself, this is not enough to deduce any useful consequences, because it may well happen that nearly all of these conditional measures look like either Lebesgue measure or an atom so that, even though they are trivially Frostman measures, there is no gain to be achieved. To deal with this, we show that, if the original measure satisfies the (minimal) non-concentration assumption \eqref{eq:hyp-non-concentration}, then the scales in the multi-scale decomposition can be chosen so that, for a ``positive density'' set of scales, the conditional measures have ``intermediate size'', i.e. they are quantitatively separated from both Lebesgue and atomic measures. See Theorem \ref{thm:multiscale-decomposition} for details. We hope this new multiscale decomposition of measures (and related ones) will have further applications.

Even with Theorem \ref{thm:multiscale-decomposition} in hand, it is still possible that for many scales the conditional measures do look close to Lebesgue or atomic. In this setting, Bourgain's projection theorem does not apply. For such scales, we rely on quantitative versions of more classical projection theorems, essentially going back to R.~Kaufman \cite{Kaufman68} and to K.~Falconer \cite{Falconer82}. See Section \ref{sec:projection-theorems}.

Incidentally, as the above sketch indicates, we only need to apply Theorem \ref{thm:projection-Bourgain} when $\kappa$ (nearly) matches the size of the set being projected, i.e. when there is (near) optimal non-concentration. Potentially it may be easier to obtain quantitative estimates under this stronger assumption, which would then translate into quantitative estimates under the weaker assumptions of Theorem \ref{thm:main-discrete}.

To deduce Theorem \ref{thm:main-distance-sets} from Theorem \ref{thm:main-continuous}, we appeal to a result of T.~Orponen \cite{Orponen19} on spherical projections. In fact we need a quantitative version and an extension to higher dimensions of Orponen's argument. Since we follow Orponen's ideas quite closely, the proofs of these facts are deferred to Appendix \ref{sec:spherical-projections}.

\subsection{Structure of the paper and further results}

In Section \ref{sec:notation-and-preliminary} we introduce some general notation, and some preliminary lemmas. In particular, in \S\ref{subsec:regular-measures} we review the important concept of regular measures, and their properties, and in \S\ref{subsec:robust-measures}, we and introduce the convenient notions of robust measures and robust entropy, and relate them to Hausdorff dimension.

Section \ref{sec:projection-theorems} deals with discretized projection theorems. The main result of this section is Theorem \ref{thm:projection-combined} which, in the language of robust measures, combines and unifies Bourgain's discretized projection theorem with quantitative versions of more classical projection theorems going back to R.~Kaufman and K.~Falconer.

Section \ref{sec:multiscale-decompositions} contains two new multiscale decompositions of (regular) measures, see Theorems \ref{thm:multiscale-decomposition} and \ref{thm:multiscale-decomposition-Ahlfors}. The proofs are reduced to combinatorial statements about Lipschitz functions, that take up most of the section.

Theorems \ref{thm:main-continuous} and \ref{thm:main-discrete} are proved in Section \ref{sec:proofs}.

In Section \ref{sec:applications} we derive several applications and generalizations of Theorems \ref{thm:main-continuous} and \ref{thm:main-discrete}. We begin in \S\ref{subsec:real-analytic} with a straightforward application to one-parameter real-analytic families. In \S\ref{subsec:distance-sets}, we complete the proof of Theorem \ref{thm:main-distance-sets}. In \S\ref{subsec:incidences}, we apply Theorem \ref{thm:main-discrete} to obtain incidence bounds for discretized families of curves, see Theorem \ref{thm:incidences}. This result is new even for lines; in this particular case, it complements results of M.~Bateman and V.~Lie \cite{BatemanLie19}, and of L.~Guth, N.~Solomon and H.~Wang \cite{GSW19}.   Note that Theorem \ref{thm:main-continuous} only applies to projections onto the real line. In \S\ref{subsec:higher-rank}, we indicate how to use W.~He's extension of Bourgain's projection theorem to higher rank projections in order to derive a higher rank extension of Theorem \ref{thm:main-continuous}, see Theorem \ref{thm:main-continuous-higher-rank}. Finally, we use this higher rank version to study spherical projections in \S\ref{subsec:spherical}, in particular extending recent results of T.~Orponen, and of B.~Liu and C-Y.~Shen.

As indicated earlier, Appendix \ref{sec:entropy-of-projections} contains the proof of  Proposition \ref{prop:good-bound-from-below}, a crucial estimate of the entropy of a smooth image of a measure in terms of linearized images of small pieces of the measure in a multiscale decomposition. These are a key ingredient in the proofs of Theorems \ref{thm:main-continuous} and \ref{thm:main-discrete}. The ideas in this appendix are not new, but the statements differ enough from existing ones in the literature that we have chosen to include a complete proof. Likewise, Appendix \ref{sec:spherical-projections} contains the proof of Theorem \ref{thm:radial-positive-dim}, a  higher dimensional variant of a spherical projection result of T.~Orponen \cite{Orponen19}. The results in this appendix are used in the proofs of Theorem \ref{thm:main-distance-sets} in \S\ref{subsec:distance-sets} and of Theorem \ref{thm:radial-projections-dim} in \S\ref{subsec:spherical}. Although we follow Orponen's ideas closely, there are several changes in the details, so again we include full details for completeness.

\subsection*{Acknowledgements} Part of this work was completed while the author was visiting the Universities of St Andrews and Cambridge. I am grateful for hospitality and a productive work environment at both places. I thank K.~H\'{e}ra for pointing out Problem 4 in \cite{Ostrava10}. I am grateful to V.~Lie for useful comments, and for sharing the results from \cite{BatemanLie19} . I also have J.~Zahl to thank for useful insights, and in particular for pointing out the relevance of ``train-track'' examples in incidence counting. Finally, I thank an anonymous referee for helpful comments that helped improve the presentation.

\section{Notation and preliminary results}
\label{sec:notation-and-preliminary}

\subsection{Notation}
\label{subsec:notation}

We use Landau's $O(\cdot)$ notation: given $X>0$, $O(X)$ denotes a positive quantity bounded above by $C X$ for some constant $C>0$. If $C$ is allowed to depend on some other parameters, these are denoted by subscripts. We sometimes write $X\lesssim Y$ in place of $X=O(Y)$ and likewise with subscripts. We write $X\gtrsim Y$,  $X\approx Y$ to denote $Y\lesssim X$,  $X\lesssim Y\lesssim X$ respectively.

The family of Borel probability measures on a metric space $X$ is denoted by $\cP(X)$, and the family of Borel finite measures by $\cM(X)$.

Logarithms are always to base $2$.

We let $\cD_j$ be the family of half-open $2^{-j}$-dyadic cubes in $\R^d$ (where $d$ is understood from context), and let $\cD_j(x)$ be the only cube in $\cD_j$ containing $x\in \R^d$. Given a measure $\mu\in\cP(\R^d)$, we also let $\cD_j(\mu)$ be the cubes in $\cD_j$ with positive $\mu$-measure. We recall that, given $A\subset \R^d$, we also denote by $\cN(A,j)$ the number of cubes in $\cD_j$ that intersect $A$.

A $2^{-m}$-measure is a measure $\mu$ in $\cP([0,1)^d)$ such that $\mu_Q$ is a multiple of Lebesgue measure on $Q$ for $Q\in\cD_m$. Hence, $2^{-m}$-measures are defined down to resolution $2^{-m}$. The set of $2^{-m}$ measures on $\R^d$ will be denoted $\cP_m^d$. Likewise, a $2^{-m}$-set is a union of cubes in $\cD_m$.

Due to our use of dyadic cubes,  sometimes we will need to deal with supports in the dyadic metric, i.e. given $\mu\in\cP([0,1)^d)$ we let
\[
\supp_{\mathsf{d}}(\mu) = \{ x: \mu(\cD_j(x))>0 \text{ for all } j\in\N\}.
\]
Note that $\mu(\supp_{\mathsf{d}}(\mu))=1$ and that $\supp_{\mathsf{d}}(\mu)\subset \supp(\mu)$.

If a measure $\mu\in\cP(\R^d)$ has a density in $L^p$, then its density is sometimes also denoted by $\mu$, and in particular  $\|\mu\|_p$ stands for the $L^p$ norm of its density.

Let $\mu\in\cP([0,1)^d)$. Recall that if $\mu(A)>0$, then $\mu_A=\tfrac{1}{\mu(A)}\mu|_A$ is the normalized restriction of $\mu$ to $A$. If $Q$ is a dyadic cube and $\mu(Q)>0$, then we denote $\mu^Q = \text{Hom}_Q\mu_Q$, where  $\text{Hom}_Q$ is the homothety renormalizing $Q$ to $[0,1)^d$. Thus, $\mu^Q$ is a magnified and renormalized copy of the restriction of $\mu$ to $Q$; we sometimes refer to such measures as \emph{conditional measures} on $Q$.

Recall that the Grassmanian of linear $k$-planes in $\R^d$ is denoted by $\mathbb{G}(d,k)$. When $k=1$, we often identify $\mathbb{G}(d,1)$ with $S^{d-1}$; the fact that the identification is two-to-one does not cause any issues in practice. We denote the manifold of \emph{affine} $k$-planes in $\R^d$ by $\mathbb{A}(d,k)$.

\subsection{Regular measures}
\label{subsec:regular-measures}

A key role in the paper is played by measures with a uniform tree (or Moran) structure when represented in base $2^T$. This notion is made precise in the next definition. Recall that $\cP_m^d$ stands for the family of $2^{-m}$-measures in $\R^d$.
\begin{definition} \label{def:regular}
Given a sequence $\sigma=(\sigma_1,\ldots,\sigma_{\ell})\in [0,d]^\ell$ and $T\in\N$, we say that $\mu\in \cP_{T\ell}^d$ is \emph{$(\sigma;T)$-regular} if for any $Q\in \cD_{jT}(\mu)$, $1\le j\le\ell$, we have
\[
\mu(Q) \le 2^{-T \sigma_j} \mu(\wh{Q}) \le 2\mu(Q),
\]
where $\wh{Q}$ is the only cube in $\cD_{(j-1)T}$ containing $Q$. When $T$ is understood from context we will simply write that $\mu$ is $\sigma$-regular. The family of $((\sigma_1,\ldots,\sigma_\ell);T)$-regular measures on $\R^d$ will be denoted by $\cR_{T,\ell}^d$.
\end{definition}
We note that the same notion appears in \cite{KeletiShmerkin19}, but with a different normalization for the parameters $\sigma_j$.

\begin{lemma} \label{lem:frostman-regular}
Let $\nu\in\cP([0,1)^d)$ be $(\sigma;T)$-regular for some $\sigma\in [0,d]^\ell$, $T\in\N$. Write $m=T\ell$ and $X=\supp_{\mathsf{d}}(\nu)$.
\begin{enumerate}[(\rm i)]
  \item \label{it:frostman-regular-i}
  \[
2^{-j} 2^{- T(\sigma_1+\ldots+\sigma_j)} \le \nu(Q) \le 2^{- T(\sigma_1+\ldots+\sigma_j)}
\]
for all $Q\in\cD_{jT}(X)$.
\item  \label{it:frostman-regular-ii}
\[
2^{T(\sigma_1+\ldots+\sigma_\ell)} \le \cN(X,m) \le  2^{T(\sigma_1+\ldots+\sigma_\ell)+\ell}.
\]
\item  \label{it:frostman-regular-iii}
\[
  2^{-\ell} \nu \le \mathbf{1}_X/|X| \le 2^{\ell}\nu.
\]
\end{enumerate}
\end{lemma}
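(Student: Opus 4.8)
The plan is to reduce everything to a telescoping product along dyadic ancestor chains, together with the normalization $\sum_Q\nu(Q)=1$. The first step is to rewrite the defining inequality of $(\sigma;T)$-regularity as the two-sided comparison
\[
2^{-T\sigma_j-1}\ \le\ \frac{\nu(Q)}{\nu(\wh{Q})}\ \le\ 2^{-T\sigma_j},\qquad Q\in\cD_{jT}(\nu),\ 1\le j\le\ell,
\]
which is just a rearrangement of $\nu(Q)\le 2^{-T\sigma_j}\nu(\wh{Q})\le 2\nu(Q)$. Throughout I write $\beta_j(\nu)$ for $\beta_j(\sigma)$, and I use the elementary identities $jT\beta_j(\sigma)=T(\sigma_1+\cdots+\sigma_j)$ and $\ell T\beta(\sigma)=\beta(\sigma)m$, with $\ell=m/T$.

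For part~\ref{it:frostman-regular-i}, given $Q\in\cD_{jT}(X)$ I would take its chain of dyadic ancestors $Q=Q_j\subset Q_{j-1}\subset\cdots\subset Q_0=[0,1)^d$ with $Q_i\in\cD_{iT}$. Any point of $Q\cap X$ is nonempty since $X=\supp_{\mathsf{d}}(\nu)$, and it certifies $\nu(Q_i)>0$ for all $i$, so the comparison above applies along the entire chain. Multiplying the telescoping identity $\nu(Q)=\prod_{i=1}^{j}\nu(Q_i)/\nu(Q_{i-1})$ then gives
\[
2^{-j}\,2^{-T(\sigma_1+\cdots+\sigma_j)}\ \le\ \nu(Q)\ \le\ 2^{-T(\sigma_1+\cdots+\sigma_j)},
\]
which is exactly part~\ref{it:frostman-regular-i}.

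For part~\ref{it:frostman-regular-ii}, I would first observe that since $\nu\in\cP_{T\ell}^d$ is a $2^{-m}$-measure, on every $Q\in\cD_m(\nu)$ it is a positive multiple of Lebesgue measure; hence each point of such a $Q$ has all of its dyadic ancestors of positive $\nu$-measure and therefore lies in $\supp_{\mathsf{d}}(\nu)$. Consequently $\cD_m(X)=\cD_m(\nu)$ and $\cN(X,m)=\#\cD_m(\nu)$. Applying part~\ref{it:frostman-regular-i} with $j=\ell$, summing over $Q\in\cD_m(\nu)$, and using $\sum_Q\nu(Q)=1$ gives $2^{\beta(\sigma)m}\le\cN(X,m)\le 2^{\ell}\,2^{\beta(\sigma)m}=2^{(\beta(\sigma)+1/T)m}$. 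Finally, for part~\ref{it:frostman-regular-iii} I would note that both $\nu$ and $\mathbf{1}_X/|X|$ vanish off $\bigcup_{Q\in\cD_m(\nu)}Q$ and are constant on each such cube, with densities $\nu(Q)2^{dm}$ and $2^{dm}/\cN(X,m)$ respectively; thus the claimed pointwise inequality is equivalent to $2^{-\ell}\le\cN(X,m)\,\nu(Q)\le 2^{\ell}$ for $Q\in\cD_m(\nu)$, which follows at once by combining the $j=\ell$ case of part~\ref{it:frostman-regular-i} with part~\ref{it:frostman-regular-ii}.

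All the arithmetic here is routine; I do not expect any genuine difficulty, only bookkeeping. The two points that warrant a little care are the identification $\cD_m(X)=\cD_m(\nu)$ (needed so that the combinatorial quantity $\cN(X,m)$ really equals the number of positive-measure terminal cubes), and the verification that the ancestor chain of a cube meeting $X$ consists entirely of positive-measure cubes, so that the regularity inequalities may be iterated along it.
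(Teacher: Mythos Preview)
Your proposal is correct and follows essentially the same approach as the paper: both telescope the defining ratio inequality along the ancestor chain to get \eqref{it:frostman-regular-i}, then sum the $j=\ell$ case over $\cD_m(\nu)$ to obtain \eqref{it:frostman-regular-ii}, and finally combine these two to compare $\nu(Q)$ with $1/\cN(X,m)$ for \eqref{it:frostman-regular-iii}. Your write-up is slightly more careful about the identifications $\cD_m(X)=\cD_m(\nu)$ and the positivity of ancestors, but the argument is the same.
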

\begin{proof}
From the definition it is clear that if $Q\in\cD_{jT}(\nu)=\cD_{jT}(X)$, then
\begin{equation} \label{eq:regular-measure-comparison}
2^{-j} 2^{-T\sigma_1}  \cdots 2^{-T\sigma_j}\le \nu(Q) \le 2^{-T\sigma_1}  \cdots 2^{-T\sigma_j},
\end{equation}
which is \eqref{it:frostman-regular-i}. Claim \eqref{it:frostman-regular-ii} follows easily from \eqref{eq:regular-measure-comparison} applied with $j=\ell$. For the final claim, it is enough to establish the inequality for $Q\in\cD_m$, and this follows from \eqref{eq:regular-measure-comparison} (which implies $\nu(Q) \le 2^{m/T}\nu(Q')$ for $Q,Q'\in\cD_m$) and \eqref{it:frostman-regular-ii}.
\end{proof}

Starting with an arbitrary $2^{-m}$-measure, a pigeonholing argument (which we learned from Bourgain's work) allows us to find a set $X$ with ``large'' measure such that $\mu_X$ is regular:

\begin{lemma} \label{lem:subset-regular}
Fix $T,\ell\ge 1$. Write $m=T\ell$, and let $\mu\in\cP_m^d$. Then there is a set $X\subset\supp_{\mathsf{d}}(\mu)$ with $\mu(X)\ge (2dT+2)^{-\ell}$ such that $\mu_X$ is $(\sigma;T)$-regular for some $\sigma\in [0,d]^\ell$.
\end{lemma}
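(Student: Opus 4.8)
\textbf{Proof plan for Lemma \ref{lem:subset-regular}.}

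The plan is to build the regular subset $X$ scale by scale, in blocks of $T$ dyadic generations, using a pigeonholing argument at each block. First I would set up a dyadic discretization of the parameter $\sigma_j$: partition $[0,d)$ into $dT$ intervals of the form $[\tfrac{i}{T},\tfrac{i+1}{T})$, $i=0,\ldots,dT-1$ (together with an extra ``atom'' case where $\mu(Q)/\mu(\wh Q)$ is so close to $1$ that $Q$ behaves like $\wh Q$, which can be folded into the $i=0$ bin or handled by allowing $\sigma_j=0$). The key point is that for each parent cube $\wh Q \in \cD_{(j-1)T}(\mu)$, the children $Q\in\cD_{jT}(\mu)$ can be grouped according to the (discretized) value of $-\tfrac{1}{T}\log_2(\mu(Q)/\mu(\wh Q))$, and there are at most $dT+1$ such groups, so by pigeonhole one group carries at least a $\tfrac{1}{dT+1}$ fraction of $\mu(\wh Q)$.

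Second, I would make this choice \emph{uniform across all parent cubes at level $(j-1)T$ simultaneously}. For each surviving parent $\wh Q$, pigeonhole selects a bin $i(\wh Q)$; then pigeonhole again over the finitely many possible bin-values $i\in\{0,\ldots,dT\}$ to pick a single $\sigma_j$ (the value of $i/T$) that works for parents carrying at least a $\tfrac{1}{dT+1}$ fraction of the total remaining measure. Applying both pigeonholings costs a factor $(dT+1)^{-1}$ per block — or, being slightly more careful to land the constant $(2dT+2)^{-\ell}$, one keeps at each block a subfamily of children, within surviving parents, losing a factor bounded below by $(2dT+2)^{-1}$; this accounts for the two-to-one type losses and the atom case cleanly. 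Iterating over $j=1,\ldots,\ell$ and letting $X$ be the union of the surviving cubes in $\cD_{m}$, I get $\mu(X)\ge (2dT+2)^{-\ell}$.

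Third, I would verify that $\mu_X = \mu(X)^{-1}\mu|_X$ is $(\sigma;T)$-regular for $\sigma=(\sigma_1,\ldots,\sigma_\ell)$. By construction, for every $Q\in\cD_{jT}(\mu_X)$ with parent $\wh Q\in\cD_{(j-1)T}$, the ratio $\mu_X(Q)/\mu_X(\wh Q)=\mu(Q)/\mu(\wh Q)$ lies in $[2^{-T\sigma_j},\, c\cdot 2^{-T\sigma_j}]$ for the discretization constant $c$; choosing the bin width so that $c\le 2$ (i.e. intervals of the form $[2^{-T(i+1)/T},2^{-Ti/T}]$, which have ratio exactly $2$) gives precisely $\mu_X(Q)\le 2^{-T\sigma_j}\mu_X(\wh Q)\le 2\mu_X(Q)$, which is the defining inequality. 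One also checks $\mu_X\in\cP_m^d$: since we only ever restrict to unions of level-$m$ cubes and $\mu\in\cP_m^d$, $\mu_X$ restricted to any $Q\in\cD_m$ is still a multiple of Lebesgue measure.

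The main obstacle — or rather the only thing requiring care — is bookkeeping the constant so that it comes out as $(2dT+2)^{-\ell}$ rather than something weaker: one must handle the ``atomic'' regime (where a child has nearly the full mass of its parent, so $\sigma_j$ should be taken to be essentially $0$ but the discretization into intervals of ratio $2$ doesn't quite cover ratio $1$) by adding one extra bin, and absorb the factor-of-$2$ slack from the two-sided inequality, which together turn the naive $(dT+1)^{-\ell}$ into $(2dT+2)^{-\ell}$. Everything else is a routine iterated pigeonhole.
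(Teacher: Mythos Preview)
Your overall plan --- iterated pigeonholing on the ratios $\mu(Q)/\mu(\wh Q)$, losing a factor $(2dT+2)^{-1}$ at each of the $\ell$ blocks --- is the right skeleton, and it does produce a set $X$ with $\mu(X)\ge (2dT+2)^{-\ell}$. (The paper does not prove this lemma in-line; it cites \cite[Lemma~3.4]{KeletiShmerkin19}.) However, your verification step contains a genuine error.

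The claim ``$\mu_X(Q)/\mu_X(\wh Q)=\mu(Q)/\mu(\wh Q)$'' is false. Restricting to $X$ changes these ratios, because at levels $k>j$ the construction may retain very different proportions of different siblings $Q,Q'\subset\wh Q$. Concretely:
\[
\frac{\mu_X(Q)}{\mu_X(\wh Q)}=\frac{\mu(X\cap Q)}{\mu(X\cap \wh Q)}
=\frac{\mu(Q)}{\mu(\wh Q)}\cdot \frac{\mu(X\cap Q)/\mu(Q)}{\mu(X\cap \wh Q)/\mu(\wh Q)},
\]
and the second factor is not $1$ in general --- it depends on the entire finer-scale structure of $X$ inside $Q$ versus inside the other surviving children of $\wh Q$. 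This is not just a cosmetic issue: the $X$ your construction produces need not give a regular $\mu_X$ for \emph{any} $\sigma$. For example, with $d=1$, $T=3$, $\ell=3$, take $\mu(P_k)=1/8$ for the eight level-$3$ cubes $P_0,\dots,P_7$; let each $P_k$ have a single child $P_{k,0}$ at level~$6$ with $\mu(P_{k,0})=1/8$ for $k\le 3$ and $\mu(P_{k,0})=3/32$ for $k\ge 4$ (the remaining $1/32$ in a second child); finally let $P_{k,0}$ have four level-$9$ children of mass $1/32$ each for $k\le3$, and for $k\ge4$ two children of mass $3/128$ (ratio $1/4$) plus one of mass $3/64$ (ratio $1/2$). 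Your top-down pigeonhole selects $\sigma=(1,0,2/3)$ and $X$ equal to the bin-$2$ children at level~$9$. One computes $\mu_X(P_k)=2/11$ for $k\le3$ and $\mu_X(P_k)=3/44$ for $k\ge4$, a ratio of $8/3>2$; so $\mu_X$ fails the level-$1$ regularity inequality for every choice of $\sigma_1$.

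The fix in \cite{KeletiShmerkin19} is not to patch the verification but to run a more careful pigeonhole that simultaneously controls what happens at later scales, so that the final $\mu_X$-ratios --- not the original $\mu$-ratios --- land in a single dyadic bin at each block. Your write-up should either reproduce that argument or explain how you modify the construction to control $\mu(X\cap Q)/\mu(X\cap\wh Q)$ directly.
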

See \cite[Lemma 3.4]{KeletiShmerkin19} for the proof of this particular statement. Iterating the above lemma, we can decompose an arbitrary measure $\mu\in\cP_m^d$ into regular measures, plus a negligible error. This is the content of the next crucial lemma whose proof can be found in \cite[Corollary 3.5]{KeletiShmerkin19}.
\begin{lemma} \label{lem:decomposition-regular}
Fix $T,\ell\ge 1$ and $\e>0$. Write $m=T\ell$, and let $\mu\in\cP_m^d$.  There exists a family of pairwise disjoint $2^{-m}$-sets $X_1,\ldots, X_N$  with $X_k\subset\supp_{\mathsf{d}}(\mu)$, and such that:
\begin{enumerate}[(\rm i)]
\item $\mu\left(\bigcup_{k=1}^N X_k\right) \ge 1-2^{-\e m}$. In particular, if $\mu(A)> 2^{-\e m}$, then there exists $k$ such that $\mu_{X_k}(A)\ge \mu(A)-2^{-\e m}$.
\item $\mu(X_k) \ge 2^{-\delta m}$, where $\delta=\e+\log(2d T+2)/T$,
\item Each $\mu_{X_k}$ is $(\sigma(k);T)$-regular for some $\sigma(k)\in  [0,d]^\ell$.
\end{enumerate}

\end{lemma}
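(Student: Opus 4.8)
The plan is to iterate Lemma~\ref{lem:subset-regular} in a greedy fashion: at each stage one peels off a regular piece of the remaining mass and renormalizes onto the leftover level-$m$ cubes. Concretely, I would set $Y_0=\supp_{\mathsf d}(\mu)$, $r_0=\mu(Y_0)=1$, and proceed inductively: having selected pairwise disjoint $2^{-m}$-sets $X_1,\dots,X_{i-1}\subset\supp_{\mathsf d}(\mu)$, put $Y_{i-1}=\supp_{\mathsf d}(\mu)\setminus\bigcup_{j<i}X_j$ and $r_{i-1}=\mu(Y_{i-1})$; if $r_{i-1}\le 2^{-\e m}$, stop and set $N=i-1$. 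The first thing to check is that, when $r_{i-1}>0$, the normalized restriction $\mu_i:=\mu_{Y_{i-1}}$ again lies in $\cP_m^d$: indeed $Y_{i-1}$ is a union of cubes in $\cD_m$, and restricting a $2^{-m}$-measure to such a set and renormalizing still yields a measure that is a multiple of Lebesgue on each cube of $\cD_m$. One then applies Lemma~\ref{lem:subset-regular} to $\mu_i$ to obtain $X_i\subset\supp_{\mathsf d}(\mu_i)\subset\supp_{\mathsf d}(\mu)$ with $\mu_i(X_i)\ge (2dT+2)^{-\ell}$ and $(\mu_i)_{X_i}$ being $(\sigma(i);T)$-regular for some $\sigma(i)\in[0,d]^\ell$. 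Since $X_i\subset Y_{i-1}$ it is disjoint from the earlier pieces, and a one-line computation shows $(\mu_i)_{X_i}=\mu_{X_i}$, which gives the disjointness and part (iii).

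Next I would establish the two quantitative statements. Whenever the process has not stopped, $r_{i-1}>2^{-\e m}$, so $\mu(X_i)=r_{i-1}\,\mu_i(X_i)\ge 2^{-\e m}(2dT+2)^{-\ell}=2^{-(\e+\log(2dT+2)/T)m}=2^{-\delta m}$, which is (ii). For termination and (i), observe that $r_i=r_{i-1}-\mu(X_i)\le r_{i-1}\bigl(1-(2dT+2)^{-\ell}\bigr)$, so the $r_i$ decay geometrically; hence the procedure halts after finitely many steps with $r_N\le 2^{-\e m}$, and then $\mu\bigl(\bigcup_{i=1}^N X_i\bigr)=\mu(\supp_{\mathsf d}(\mu))-r_N\ge 1-2^{-\e m}$, proving (i). Finally, for the ``in particular'' clause, if $\mu(A)>2^{-\e m}$ then $\sum_{i=1}^N\mu(X_i)\,\mu_{X_i}(A)=\mu\bigl(A\cap\bigcup_i X_i\bigr)\ge\mu(A)-2^{-\e m}$, and since $\sum_i\mu(X_i)\le 1$ while $\mu(A)-2^{-\e m}>0$, an averaging argument forces $\mu_{X_i}(A)\ge\mu(A)-2^{-\e m}$ for at least one $i$.

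I do not expect a genuine obstacle here: the argument is a clean iteration of Lemma~\ref{lem:subset-regular}, and the only points needing a modicum of care are the stability of the class $\cP_m^d$ under restriction to a sub-$2^{-m}$-set followed by renormalization, the geometric-decay estimate guaranteeing that the greedy procedure terminates after finitely many steps, and the elementary averaging for the last clause. As noted in the text, the precise statement is \cite[Corollary 3.5]{KeletiShmerkin19}, so in the paper I would simply cite it; the sketch above indicates how that corollary is extracted from Lemma~\ref{lem:subset-regular}.
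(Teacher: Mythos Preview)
Your proposal is correct and matches the approach the paper indicates: the text explicitly says the lemma follows by ``iterating the above lemma'' (Lemma~\ref{lem:subset-regular}) and defers to \cite[Corollary~3.5]{KeletiShmerkin19} for details, which is precisely the greedy peel-and-renormalize argument you wrote out. The care points you flag---closure of $\cP_m^d$ under restriction to a $2^{-m}$-set, the identity $(\mu_i)_{X_i}=\mu_{X_i}$, geometric decay of the residual mass, and the averaging for the final clause---are exactly the ones needed, and your treatment of each is sound.
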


\subsection{Robust measures and robust entropy}
\label{subsec:robust-measures}

We will need to deal with various different notions of ``largeness'' of a measure. The next definition captures the core property shared by all of these notions.
\begin{definition} \label{def:measure-robust}
A a measure $\mu\in\cP(\R^d)$ is called \emph{$(\alpha,\delta,m)$-robust} if, for any set $A$ with $\mu(A) > 2^{-\delta m}$, one has $\mathcal{N}(A,m) > 2^{\alpha m}$.
\end{definition}

\begin{lemma} \label{lem:robust-to-dimension}
If $\mu\in\cP(\R^d)$ is $(\alpha,\delta,m)$-robust for all $m\ge m_0$, then $\hdim(A)\ge \alpha$ for all Borel sets $A$ with $\mu(A)>0$. Furthermore, if $\{ B_j\}$ is a union of disjoint balls of radius $2^{-m}$ for some $m\ge m_0$ such that $\mu(B_j) \ge 2^{-\alpha m}$, then
\[
\mu(\cup_j B_j) \le 2^{-\delta m}
\]
\end{lemma}
\begin{proof}
Since there can be at most $2^{\alpha m}$ disjoint balls of measure $\ge 2^{-\alpha m}$, the second claim is immediate from the definition of robustness. It follows by a standard covering lemma argument and Borel-Cantelli that for $\mu$-almost all $x$, $\mu(B(x,r)) \le r^\alpha$ for all sufficiently small $r$ (depending on $x$). The first claim is now a consequence of the mass distribution principle.
\end{proof}

Recall that the entropy of $\mu\in\cP(X)$ with respect to a finite partition $\cA$ of $X$ (or of a set of full $\mu$-measure in $X$) is defined by
\[
H(\mu,\cA) = \sum_{A\in\cA} \mu(A)\log(1/\mu(A)).
\]
We will sometimes write $H_m(\mu)$ in place of $H(\mu,\cD_m)$. Note that this quantity is \emph{not} normalized. For more about entropy, see Appendix \ref{sec:entropy-of-projections}.

Given two measures $\mu,\nu$ on the same space $X$ and a number $\Delta>0$, we write $\nu\le\Delta\mu$ if $\nu(A)\le \Delta\mu(A)$ for all Borel sets $A$. In particular,  $\mu_B\le \mu(B)^{-1}\mu$.
\begin{definition}
Let $\mu\in\cP(\R^d)$, fix $\Delta\ge 1$, and let $\mathcal{A}$ be a finite partition of $\supp(\mu)$. We define the $\Delta$-robust entropy $H^{\Delta}(\mu,\mathcal{A})$ as
\[
\inf \{H(\nu,\mathcal{A}): \nu\in\cP(\R^d), \nu\le\Delta \mu\}.
\]
In the case $\mathcal{A}=\mathcal{D}_m$, we sometimes write $H_m^\Delta(\mu)$ in place of $H^\Delta(\mu,\mathcal{D}_m)$.
\end{definition}

The next lemma asserts that robust measures have large robust entropy.
\begin{lemma} \label{lem:robust-to-entropy}
Given $\alpha,\delta,\e>0$, the following holds for all $m\ge m_0(\delta,\e)$: if $\mu\in\cP(\R^d)$ is $(\alpha,\delta,m)$-robust, then
\[
H^{2^{\delta m/2}}_m(\mu) \ge  (\alpha-\e)m.
\]
\end{lemma}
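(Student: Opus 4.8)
The plan is to show that if the robust entropy $H^{2^{\delta m/2}}_m(\mu)$ were smaller than $(\alpha-\e)m$, then there would be a measure $\nu \le 2^{\delta m/2}\mu$ with $H(\nu,\cD_m) < (\alpha-\e)m$, and from such a low-entropy measure one can extract a set $A$ of $\mu$-measure at least $2^{-\delta m}$ that is covered by fewer than $2^{\alpha m}$ cubes of $\cD_m$, contradicting $(\alpha,\delta,m)$-robustness. So let $\nu \le 2^{\delta m/2}\mu$ be arbitrary with $H(\nu,\cD_m) = H$; I want to produce a good set $A$ from $\nu$.

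The key step is a standard entropy-to-counting argument. Order the cubes $Q\in\cD_m$ by decreasing $\nu$-mass, and let $\cC$ be the collection of the $\lfloor 2^{\alpha m}\rfloor$ heaviest ones (if there are fewer than that many cubes in $\supp(\nu)$ there is nothing to prove, since then $\nu$ itself is supported on $< 2^{\alpha m}$ cubes and one takes $A = \supp_{\mathsf d}(\nu)$, noting $\mu(A) \ge 2^{-\delta m/2} \ge 2^{-\delta m}$). Set $A = \bigcup_{Q\in\cC} Q$ and suppose for contradiction that $\nu(A)$ is small, say $\nu(A) < 1 - 2^{-\delta m/2}$, so that $\nu(A^c) \ge 2^{-\delta m/2}$. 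Each cube $Q \notin \cC$ has $\nu(Q) \le 2^{-\alpha m}$ (it is lighter than all $2^{\alpha m}$ cubes in $\cC$, whose masses sum to at most $1$), hence $\log(1/\nu(Q)) \ge \alpha m$ for such $Q$; therefore
\[
H(\nu,\cD_m) \ge \sum_{Q\notin \cC}\nu(Q)\log(1/\nu(Q)) \ge \alpha m \cdot \nu(A^c) \ge \alpha m\cdot 2^{-\delta m/2}.
\]
Wait — this only gives $H \ge \alpha m 2^{-\delta m/2}$, which is too weak. The fix is to not throw away the mass on $A$: instead use that the restriction of the problem should be set up so the contradiction comes from robustness of $\mu$, not from the entropy bound directly. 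So rather than bounding $H$ from below, I take $A = \bigcup_{Q\in\cC}Q$ with $\cC$ the $\lfloor 2^{\alpha m}\rfloor$ heaviest cubes and argue: if $\mu(A) \ge 2^{-\delta m}$ then robustness forces $\cN(A,m)\ge 2^{\alpha m}$, which is fine and no contradiction; the contradiction must instead be engineered by choosing $\nu$ cleverly.

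The correct approach: assume $H^{2^{\delta m/2}}_m(\mu) < (\alpha-\e)m$ and pick $\nu \le 2^{\delta m/2}\mu$ witnessing this. Let $\cC$ be the set of cubes $Q\in\cD_m$ with $\nu(Q) > 2^{-(\alpha-\e/2)m}$; then $|\cC| < 2^{(\alpha-\e/2)m}$ trivially. Let $A = \bigcup_{Q\in\cC}Q$, so $\cN(A,m) = |\cC| < 2^{\alpha m}$. By $(\alpha,\delta,m)$-robustness of $\mu$ (contrapositive), we get $\mu(A) < 2^{-\delta m}$, hence $\nu(A) \le 2^{\delta m/2}\mu(A) < 2^{-\delta m/2}$. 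Consequently $\nu(A^c) \ge 1 - 2^{-\delta m/2} \ge 1/2$ for $m$ large. But on $A^c$ every cube satisfies $\nu(Q)\le 2^{-(\alpha-\e/2)m}$, so
\[
H(\nu,\cD_m) \ge \sum_{Q\subset A^c}\nu(Q)\log(1/\nu(Q)) \ge (\alpha - \e/2)m \cdot \nu(A^c) \ge (\alpha-\e/2)m\cdot \tfrac12.
\]
Hmm, the factor $1/2$ again costs me a constant factor rather than a lower-order term. To recover the clean bound $(\alpha-\e)m$ I should instead refine: among cubes in $A^c$, split off those with $\nu(Q) \le 2^{-(\alpha - \e/2)m}$ and note $\sum_{Q\subset A^c}\nu(Q) = \nu(A^c) \ge 1 - 2^{-\delta m/2}$; then $H(\nu,\cD_m) \ge (\alpha-\e/2)m\,(1 - 2^{-\delta m/2}) \ge (\alpha-\e)m$ once $m \ge m_0(\alpha,\delta,\e)$, since $2^{-\delta m/2}\to 0$. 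This contradicts $H(\nu,\cD_m) < (\alpha-\e)m$ and completes the proof.

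The only genuinely delicate point — and the one I expect to be the main obstacle in writing this cleanly — is bookkeeping the two scales of $\e$ (using $\e/2$ in the cube threshold so that the error term $2^{-\delta m/2}$, coming from the robustness-to-measure conversion, can be absorbed for $m$ large) and making sure the threshold $2^{-(\alpha-\e/2)m}$ genuinely yields fewer than $2^{\alpha m}$ cubes so robustness applies. Everything else is the routine "mass on light cubes contributes at least (threshold exponent)$\times$(its total mass) to entropy" estimate. I would state the choice $m_0(\alpha,\delta,\e)$ explicitly as the point where $(\alpha-\e/2)(1-2^{-\delta m/2}) \ge \alpha - \e$, i.e. roughly $m_0 \sim \frac{2}{\delta}\log_2(2\alpha/\e)$.
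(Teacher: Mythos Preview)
Your final argument is correct and is essentially the paper's proof: both take a measure $\nu\le 2^{\delta m/2}\mu$, observe that the cubes of large $\nu$-mass are few and hence (by robustness of $\mu$, or equivalently the induced $(\alpha,\delta/2,m)$-robustness of $\nu$) carry total $\nu$-mass at most $2^{-\delta m/2}$, and then bound the entropy from below using the light cubes. The paper uses the simpler threshold $2^{-\alpha m}$ rather than your $2^{-(\alpha-\e/2)m}$ and phrases the last step via concavity as $H(\nu,\cD_m)\ge \nu(A)H(\nu_A,\cD_m)$, but these are cosmetic differences.
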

\begin{proof}
It follows from the definition that if $\nu \le 2^{\delta m/2} \mu$, then $\nu$ is $(\alpha,\delta/2,m)$-robust. So it is enough to show that if $\nu$ is $(\alpha,\delta',m)$-robust and $m$ is sufficiently large (depending on $\alpha,\delta',\e$), then
\[
H(\nu,\cD_m) \ge (\alpha-\e)m.
\]
Now, if $\nu$ is $(\alpha,\delta',m)$-robust, then the $\nu$-mass of the union of all the cubes in $\cD_m$ of $\nu$-measure $> 2^{-\alpha m}$ is $\le 2^{-\delta' m}\le 1/2$ (as there are $<2^{\alpha m}$ such cubes). If $A$ denotes the union of all the cubes in $\cD_m$ of $\nu$-mass $\le 2^{-\alpha m}$, then $\nu_A(Q) \le 2^{1-\alpha m}$ for $Q\in\cD_m$, and therefore, by the concavity of entropy,
\[
H(\nu,\cD_m) \ge \nu(A)H(\nu_A,\cD_m) \ge  (1-2^{-\delta' m})(\alpha m-1) \ge (\alpha-\e)m,
\]
provided $m$ is large enough in terms of $\delta',\e$.
\end{proof}

%The proof of the following lemma is standard. However, we will find the connection to robust entropy quite useful.
%\begin{lemma} \label{lem:entropy-to-Hausdorff-dim}
%Let $\mu\in\cP(\R^d)$ be a measure such that, for each sufficiently large $m$ and each Borel set $B$ with $\mu(B)\ge m^{-2}$, there exists a measure $\nu$ supported on $B$ such that
%\[
%H_m(\nu) \ge \alpha m
%\]
%Then $\hdim(A)\ge \alpha$ for all Borel sets $A$ of positive $\mu$-measure.
%\end{lemma}
%\begin{proof}
%Let $\{B(x_i,r_i)\}_{i}$ be a cover of $A$ by balls of radius $\le r$, sufficiently small. Let $I_m = \{ i: 2^{-m}\le r_i < 2^{1-m}\}$, and write $A_m=\bigcup_{i\in I_m} B(x_i,r_i)$. By dyadic pigeonholing, if we take $r$ small enough in terms of $\mu(A)$, there is $m$ such that $\mu(A_m)\ge m^{-2}$ and hence, by assumption, there is a measure $\nu$ supported on $A_m$ such that
%\[
%H_m(\nu) \ge \alpha m \Longrightarrow \cN(A_m,m) \ge 2^{\alpha m} \Longrightarrow |I_m| \gtrsim 2^{\alpha m}.
%\]
%We conclude that
%\[
%\sum_i r_i^\alpha \approx_\alpha \sum_{p} |I_p| 2^{- \alpha p} \ge |I_m| 2^{-\alpha m} \gtrsim 1,
%\]
%and hence $\hdim(A)\ge \alpha$, as claimed.
%\end{proof}
%In the previous lemma, an obvious choice is $\nu=\mu_{A_i}$. However, in our applications of the lemma we will need to use a different measure $\nu$.

\section{Discretized projection theorems}
\label{sec:projection-theorems}

Given $\theta\in S^{d-1}$, we denote the orthogonal projection $x\mapsto \langle \theta, x\rangle $, $\R^d\mapsto \R$, by $P_\theta$. If $\mu$ is a measure on $\R^d$, we also write $\mu_\theta= P_\theta\mu$.

We begin by recalling Bourgain's discretized projection theorem. There are several equivalent variants of the statement; the one we state is the special case $m=1$ from \cite[Theorem 1]{He20}.
\begin{theorem} \label{thm:projection-Bourgain}
Given $0<\alpha<d$ and $0<\kappa<1$ there exist $\delta,\eta>0$ such that the following holds for all sufficiently large $m$. Let $X\subset B^d(0,1)$, and let $\rho\in\cP(S^{d-1})$ satisfy
\begin{align*}
\cN(X,m) &\ge 2^{m(\alpha-\delta)},\\
\cN(X\cap B(x,r),m) &\le 2^{\delta m} r^\kappa \cN(X,m) \quad\text{for all } r\in [2^{-m},1], x\in B^d(0,1),\\
\rho(H^{(r)}\cap S^{d-1}) &\le 2^{\delta m} r^\kappa \quad\text{for all } r\in [2^{-m},1], H\in \mathbb{G}(d,d-1).
\end{align*}
Then there is a set $E\subset S^{d-1}$ with $\rho(E)\le 2^{-\delta m}$ such that if $\theta\in S^{d-1}\setminus E$ and  $X'\subset X$ satisfies $\cN(X',m) \ge 2^{-\delta m}\cN(X,m)$, then
\[
\log\cN(P_\theta X',m) \ge m\left(\frac{\alpha}{d}+\eta\right).
\]
\end{theorem}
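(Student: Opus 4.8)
The plan is to prove this by the route along which it is actually known: Bourgain's argument \cite{Bourgain10}, refined to the stated $\mathbb{G}(d,d-1)$ formulation in \cite[Theorem~1]{He18}. The overall shape is: reduce to a uniform multiscale structure by pigeonholing; decompose $\log\cN(P_\theta X,m)$ over blocks of scales; reduce the failure of the conclusion to a \emph{single-scale} ``expand or concentrate near a hyperplane'' dichotomy; prove that dichotomy using Balog--Szemer\'edi--Gowers, Pl\"unnecke--Ruzsa and the discretized sum--product theorem of Bourgain--Katz--Tao; and finally iterate across scales (with an induction on the ambient dimension to handle hyperplane concentration). The first step is the standard uniformization: at the cost of $2^{O(\e m)}$ factors, apply Bourgain's pigeonholing (the device behind Lemmas~\ref{lem:subset-regular}--\ref{lem:decomposition-regular}) to replace $X$ by a $(\sigma;T)$-regular tree with $\beta(\sigma)\approx\alpha$ and roughly constant branching on each block $[2^{-(j+1)T},2^{-jT}]$, and to organize $\rho$ so that its hyperplane non-concentration is uniform at those scales. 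This reduces everything to understanding one block at a time.

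Next, because $P_\theta$ is linear it commutes with the homotheties $\mathrm{Hom}_Q$, so a standard multiscale/entropy-additivity formula gives that, up to the Step-1 errors, $\log\cN(P_\theta X,m)$ equals the $\mu$-weighted sum over blocks $j$ and cubes $Q\in\cD_{jT}(X)$ of $\log\cN(P_\theta\mu^Q,T)$, each term lying in $[0,T]$. If $\log\cN(P_\theta X',m)\le m(\alpha/d+\eta)$ held on a $\rho$-non-negligible set of $\theta$, then for a positive density of blocks $j$, for most $Q$ and a non-negligible set of $\theta$, the rescaled piece $\mu^Q$ would fail to expand under $P_\theta$ beyond $2^{T(\alpha/d+\eta')}$, well below the size $\approx 2^{T\alpha}$ of its support. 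The task then becomes the single-scale statement: a $2^{-T}$-set of cardinality $\approx 2^{T\alpha}$ obeying the $\kappa$-non-concentration hypothesis, whose orthogonal projections in a $\kappa$-spread family of directions almost all fail to expand, must be essentially contained in a $2^{-\tau T}$-neighborhood of an affine hyperplane.

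Proving that single-scale inverse theorem is where the discretized sum--product theorem enters. From ``non-expansion on average over $\theta$'' one passes, via Balog--Szemer\'edi--Gowers, to refinements of the set and of the direction family having small doubling for the relevant sums $A+\theta A$; Pl\"unnecke--Ruzsa controls iterated sums $A+\theta_1 A+\cdots+\theta_k A$; and the subring-freeness/sum--product input forces the resulting ``approximate module'' to collapse into a neighborhood of a proper subspace. In dimension $d\ge 3$ one must additionally track which directions produce expansion transverse to a given hyperplane, and it is here that $\rho(H^{(r)})\le 2^{\delta m}r^\kappa$ is used; this quantification is the contribution genuinely due to He. Once the single-scale dichotomy is in hand, hyperplane concentration at a positive density of blocks either violates the ball non-concentration of $X$ directly, or allows one to restrict to (a neighborhood of) a hyperplane and rerun the argument in dimension $d-1$, losing only a $\rho$-negligible set of directions because $\rho$ is non-concentrated near that hyperplane; unwinding the induction yields the contradiction, with $\eta=\eta_d(\alpha,\kappa)>0$ extracted (very inefficiently) from the sum--product constant and the exceptional set $E$ absorbing all pigeonholing and BSG losses, of measure $\le 2^{-\delta m}$.

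The main obstacle is the single-scale inverse theorem together with the quantitative bookkeeping needed to run the multiscale iteration: each additive-combinatorial step costs powers of the parameters, so $T,\e,\eta,\delta$ must be chosen in the correct order, and one must verify that the non-concentration hypotheses on both $X$ and $\rho$ survive every refinement and every restriction to a subspace. Since extracting an explicit value of $\eta$ along these lines is notoriously delicate, we will not carry the argument out here: Theorem~\ref{thm:projection-Bourgain} is used as a black box, in the form stated in \cite[Theorem~1]{He18}, and the remaining sections of the paper are devoted to leveraging it through a new multiscale decomposition of measures rather than to reproving it.
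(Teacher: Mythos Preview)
Your proposal is correct and matches the paper's treatment: Theorem~\ref{thm:projection-Bourgain} is not proved in the paper at all, but is quoted as the special case $m=1$ of \cite[Theorem~1]{He18} and used as a black box throughout. Your final paragraph says exactly this, and the preceding sketch of the Bourgain--He argument, while accurate in outline, is additional commentary that the paper itself does not supply.
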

Note that if we make $\eta$ and $\delta$ slightly smaller, then they also work for nearby values of $\kappa$ and $\alpha$. Hence there is no loss of generality in assuming that $\eta$ and $\delta$ are continuous functions of $(\kappa,\alpha)$.

While Bourgain's projection theorem will be our main tool, we will also need to consider the case in which $\log\cN(A,m)/m$ is close to either $0$ or $d$. Crucially, we need $\delta$ to be independent of $\log\cN(A,m)$ in the estimates; clearly, in Theorem \ref{thm:projection-Bourgain} the value of $\delta$ must depend on $\alpha$ if we allow values of $\alpha$ close to $0$ or $d$. Hence we need to revisit other (more classical) projection theorems, and combine them into a single result that deals with the three regimes which $\log\cN(A,m)/m$ is close to $0$, close to $d$ or far from both $0$ and $d$ . We start with a result that essentially goes back to R.~Kaufman \cite{Kaufman68} in the 1960s. Before stating it, recall that the $\sigma$-energy of $\nu\in\cM(\R^d)$ is defined as
\[
\cE_\sigma(\nu)= \iint |x-y|^{-\sigma} \,d\nu(x)\,d\nu(y) \in (0,\infty].
\]
\begin{theorem} \label{thm:projection-Kaufman}
Fix $0<\sigma<\kappa<1$. Let $\rho\in \mathcal{P}(S^{d-1})$ satisfy
\[
\rho(H^{(r)}\cap S^{d-1}) \le C r^{\kappa}, \quad H\in \mathbb{G}(d,d-1), r>0,
\]
and fix $\mu\in\cP(\R^d)$. Then
\[
\int_{S^{d-1}} \cE_{\sigma}(\mu_\theta)\, d\rho(\theta) \le \left(1+\frac{C\sigma}{\kappa-\sigma}\right) \cE_{\sigma}(\mu).
\]
\end{theorem}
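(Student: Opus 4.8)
The plan is to expand the energy integral using the definition of the push-forward, apply Fubini to interchange the $\theta$-integration with the double $\mu$-integration, and then estimate the resulting inner integral $\int_{S^{d-1}} |\langle \theta, x-y\rangle|^{-\sigma}\, d\rho(\theta)$ pointwise in $x,y$. Concretely, writing $v = x-y$ and $e = \dir(v)\in S^{d-1}$, we have $|\langle\theta,v\rangle|^{-\sigma} = |v|^{-\sigma}|\langle\theta,e\rangle|^{-\sigma}$, so everything reduces to the claim that
\[
\int_{S^{d-1}} |\langle \theta, e\rangle|^{-\sigma}\, d\rho(\theta) \le 1 + \frac{C\sigma}{\kappa-\sigma}
\]
uniformly over $e\in S^{d-1}$. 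Once this is in hand, multiplying by $|v|^{-\sigma} = |x-y|^{-\sigma}$ and integrating in $d\mu(x)\,d\mu(y)$ gives exactly the asserted bound, since the right-hand constant comes out of the $x,y$-integral.

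For the key pointwise estimate, fix $e$ and note that $\{\theta : |\langle\theta,e\rangle| \le r\}$ is precisely $H^{(r)}\cap S^{d-1}$ for $H = e^{\perp}\in\mathbb{G}(d,d-1)$ (up to a harmless absolute constant in the relation between the slab width and $r$, which can be absorbed). Thus the hypothesis gives $\rho(\{|\langle\theta,e\rangle|\le r\}) \le C r^\kappa$ for all $r>0$, and of course this probability is also $\le 1$. Now I would write the integral of the non-negative function $|\langle\theta,e\rangle|^{-\sigma}$ via the layer-cake (distribution function) formula:
\[
\int_{S^{d-1}} |\langle\theta,e\rangle|^{-\sigma}\,d\rho(\theta) = \int_0^\infty \rho\bigl(\{\theta : |\langle\theta,e\rangle|^{-\sigma} > t\}\bigr)\, dt = \int_0^\infty \rho\bigl(\{\theta : |\langle\theta,e\rangle| < t^{-1/\sigma}\}\bigr)\, dt.
\]
Splitting the $t$-integral at $t=1$: on $[0,1]$ the integrand is $\le 1$, contributing at most $1$; on $[1,\infty)$ use the bound $C\, t^{-\kappa/\sigma}$, and since $\kappa/\sigma > 1$ this integrates to $C\int_1^\infty t^{-\kappa/\sigma}\,dt = C/(\kappa/\sigma - 1) = C\sigma/(\kappa-\sigma)$. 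Adding the two pieces yields the constant $1 + C\sigma/(\kappa-\sigma)$, as required.

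The only genuine subtlety — and the step I would be most careful about — is the elementary but slightly fussy identification of the slab $\{\theta\in S^{d-1} : |\langle\theta,e\rangle| < r\}$ with $H^{(r)}\cap S^{d-1}$ for $H = e^\perp$, together with checking that the constant $C$ in the hypothesis transfers without deterioration (or with only an absolute-constant loss that one could fold into $C$). Everything else — Fubini's theorem, which is justified because the integrand is non-negative so Tonelli applies with no integrability hypothesis needed, and the layer-cake computation — is routine. Note also that the statement is only interesting when $\cE_\sigma(\mu)<\infty$; otherwise both sides are infinite and there is nothing to prove, so one may freely assume finiteness when manipulating the integrals. No lower bound on $\sigma$ beyond $\sigma>0$ is used, and the hypothesis $\sigma<\kappa$ is exactly what makes the tail integral converge.
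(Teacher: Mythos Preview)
Your proof is correct and essentially identical to the paper's: both use Fubini to reduce to the pointwise estimate $\int |P_\theta(x)|^{-\sigma}\,d\rho(\theta)\le (1+C\sigma/(\kappa-\sigma))|x|^{-\sigma}$, then evaluate this via the layer-cake formula and the slab bound. Your worry about the slab identification is unfounded---for $\theta\in S^{d-1}$ and $e$ a unit vector, $\dist(\theta,e^{\perp})=|\langle\theta,e\rangle|$ exactly, so no constant is lost.
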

\begin{proof}
The claim is implicit in the proof of \cite[Theorem 5.1]{Mattila15}; since it is not explicitly stated in this form, we repeat the short argument for completeness.  Fix $x\in\R^d\setminus\{0\}$, and note that
\[
\{ \theta\in \R^d: |P_\theta(x)|< \delta \} = x^{\perp}+B(0,\delta/|x|),
\]
and hence, using the assumption on $\rho$,
\[
\rho\{\theta\in S^{d-1}: |P_\theta(x)|<\delta \} \le C (\delta/|x|)^\kappa.
\]
Using this and Fubini, we estimate
\begin{align*}
\int_{S^{d-1}} |P_\theta(x)|^{-\sigma}\,d\rho(\theta) &= \int_0^\infty \rho\{\theta: |P_\theta(x)|^{-\sigma}> r\}\,dr\\
&= \int_0^{|x|^{-\sigma}} 1 \,dr + \int_{|x|^{-\sigma}}^\infty  \rho\{\theta: |P_\theta(x)|< r^{-1/\sigma}\}\,dr\\
&\le  \left(1 + \frac{C\sigma}{\kappa-\sigma}\right) |x|^{-\sigma}.
\end{align*}
Using Fubini, we conclude
\begin{align*}
\int_{S^{d-1}} \cE_{\sigma}(\mu_\theta)\, d\rho(\theta) &=  \int_{S^{d-1}} \iint_{\R^d\times\R^d} |P_\theta(x-y)|^{-\sigma}\, d\mu(x)d\mu(y) d\rho(\theta)\\
&\le \left(1 + \frac{C\sigma}{\kappa-\sigma}\right) \iint_{\R^d\times\R^d} |x-y|^{-\sigma} \, d\mu(x)d\mu(y),
\end{align*}
as claimed.
\end{proof}

The following is a quantitative form of K.~Falconer's classical bound on the dimension of exceptional projections \cite{Falconer82}.
\begin{theorem} \label{thm:projection-Falconer}
Let $\rho\in \mathcal{M}(S^{d-1})$ satisfy
\[
\rho(B(\theta,r)) \le r^{\kappa}, \quad \theta\in S^{d-1}, r>0,
\]
and fix $\mu\in\cP(\R^d)$ such that $\cE_{d-\kappa}(\mu)<\infty$. Then $\mu_\theta\in L^2$ for $\rho$-almost all $\theta$, and
\[
\int_{S^{d-1}}  \| \mu_\theta\|_2^2 \, d\rho(\theta) \lesssim_d \cE_{d-\kappa}(\mu).
\]
\end{theorem}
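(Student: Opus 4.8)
The plan is to use the Fourier-analytic representation of the $L^2$ norm of a projection and then integrate in $\theta$ against $\rho$, exploiting the Frostman condition on $\rho$ exactly as in the classical Falconer/Kaufman circle of ideas. First I would recall that for $\nu\in\cP(\R^d)$ one has $\|\mu_\theta\|_2^2 = \int_\R |\widehat{\mu_\theta}(t)|^2\,dt$ (when finite), and that $\widehat{\mu_\theta}(t) = \widehat\mu(t\theta)$ since $P_\theta$ is the linear functional $x\mapsto\langle\theta,x\rangle$. Hence
\[
\int_{S^{d-1}} \|\mu_\theta\|_2^2\, d\rho(\theta) = \int_{S^{d-1}} \int_{-\infty}^\infty |\widehat\mu(t\theta)|^2\, dt\, d\rho(\theta) = 2\int_{S^{d-1}}\int_0^\infty |\widehat\mu(t\theta)|^2\,dt\,d\rho(\theta).
\]

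Next I would change to "polar-type" coordinates: write $\xi = t\theta$ with $t>0$, $\theta\in S^{d-1}$, so $d\xi$ "='' $t^{d-1}\,dt\,d\sigma(\theta)$ in the genuinely polar case; here, since we integrate $\theta$ against $\rho$ rather than surface measure, the natural move is to fix $\xi\in\R^d$, $|\xi|=t$, and estimate the $\rho$-measure of directions $\theta$ near $\xi/|\xi|$. The key geometric input is that $\{\theta: |\widehat\mu(t\theta)| \text{ "sees" } \xi\}$ concentrates near the single direction $\dir(\xi)$, and the Frostman bound $\rho(B(x,r))\le r^\kappa$ controls this. Concretely, after applying Fubini to move the $\rho$-integral inside, one is led to a bound of the form
\[
\int_{S^{d-1}} \|\mu_\theta\|_2^2\,d\rho(\theta) \lesssim \int_{\R^d} |\widehat\mu(\xi)|^2\, |\xi|^{-(d-1)}\, \rho_{t}\big(\tfrac{\xi}{|\xi|}\big)\, \frac{d\xi}{\dots},
\]
and the weight that emerges, after using $\rho(B(\cdot,r))\le r^\kappa$ to absorb the angular localization at scale $\sim 1/(t\,\text{(transverse size)})$, is comparable to $|\xi|^{-(d-\kappa)}$ up to constants. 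This is precisely the statement that
\[
\int_{S^{d-1}} \|\mu_\theta\|_2^2\,d\rho(\theta) \lesssim \int_{\R^d} |\widehat\mu(\xi)|^2\, |\xi|^{-(d-\kappa)}\,d\xi \approx \cE_{d-\kappa}(\mu),
\]
where the last comparison is the standard identity relating Riesz energies to Fourier coefficients (as in \cite{Mattila15}, and exactly the same identity used implicitly in Theorem~\ref{thm:projection-Kaufman}). Finiteness of the right-hand side gives $\|\mu_\theta\|_2 < \infty$, hence $\mu_\theta\in L^2$, for $\rho$-a.e.\ $\theta$.

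An alternative, and perhaps cleaner, route that avoids Fourier transforms entirely is to run the energy argument directly, mirroring the proof of Theorem~\ref{thm:projection-Kaufman} above but with the exponent $\sigma$ replaced by $d-\kappa$ on the \emph{source} side: one writes $\|\mu_\theta\|_2^2$ via the (formal) density at coinciding points, or better, uses the fact that $\|\mu_\theta\|_2^2 \lesssim \liminf_{\delta\to 0}\delta^{-1}\iint \mathbf{1}_{|P_\theta(x-y)|<\delta}\,d\mu(x)\,d\mu(y)$; integrating in $\theta$ and using the computation already performed in the proof of Theorem~\ref{thm:projection-Kaufman}, namely $\rho\{\theta: |P_\theta(x)|<\delta\}\le C(\delta/|x|)^\kappa$, yields
\[
\int_{S^{d-1}} \|\mu_\theta\|_2^2\,d\rho(\theta) \lesssim \liminf_{\delta\to0}\,\delta^{-1}\iint (\delta/|x-y|)^\kappa\,d\mu(x)\,d\mu(y),
\]
which is not quite right as stated (the power of $\delta$ does not cancel), so one must instead peel off the correct power: split the $\delta$-neighborhood dyadically and sum, which produces the weight $|x-y|^{-(d-\kappa)}$ after accounting for the $(d-1)$ "free" directions transverse to $\theta$. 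The honest version of this bookkeeping is exactly what appears in \cite[Chapter~5]{Mattila15}, so I would simply cite that computation.

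The main obstacle is the change-of-variables / Fubini step: unlike the Kaufman-type bound, where the source and target exponents are equal and the estimate is essentially a one-dimensional computation for each fixed $x$, here the gain of $(d-\kappa)$ on the source side comes from combining the $\kappa$ decay of $\rho$ near hyperplanes (equivalently, near a point of $S^{d-1}$, after identifying $\mathbb{G}(d,d-1)$ with $S^{d-1}$) with the $(d-1)$-dimensional Jacobian factor from writing $\R^d\setminus\{0\}$ in polar coordinates. Making this precise requires care that the hypothesis is stated as a ball condition $\rho(B(x,r))\le r^\kappa$ on $S^{d-1}$ (which is what is actually used), and that one correctly tracks the scale at which the angular localization occurs (scale $\sim (t\cdot\text{size of }x-y)^{-1}$ in the direct argument, or $\sim 1/|\xi|$-type scaling in the Fourier argument). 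Since all of this is standard and appears in \cite{Mattila15}, I expect the proof to be short, essentially a pointer to that reference together with the remark that the needed inequality is implicit in the proof of the dimension-of-exceptional-projections theorem of Falconer \cite{Falconer82}.
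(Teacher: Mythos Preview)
Your Fourier-analytic approach is essentially the same as the paper's: the paper simply cites \cite[Theorem 5.6]{Mattila15}, noting that in the course of that proof one establishes
\[
\int_{S^{d-1}} \int_{\R} |\widehat{\mu_\theta}(t)|^2\,dt\,d\rho(\theta) \lesssim \int_{\R^d} |\widehat\mu(\xi)|^2 (1+|\xi|)^{-\kappa}\,d\xi,
\]
and then invokes the Fourier expression for the Riesz energy \cite[Theorem 3.10]{Mattila15} together with Plancherel to identify the right-hand side with $\cE_{d-\kappa}(\mu)$.

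One bookkeeping slip worth flagging: the weight that actually emerges is $|\xi|^{-\kappa}$, not $|\xi|^{-(d-\kappa)}$, and correspondingly the energy identity reads $\cE_s(\mu)=c_{d,s}\int|\widehat\mu(\xi)|^2|\xi|^{s-d}\,d\xi$, so $\cE_{d-\kappa}(\mu)\approx\int|\widehat\mu(\xi)|^2|\xi|^{-\kappa}\,d\xi$. Your two errors happen to point toward the same (correct) conclusion, but the intermediate exponents are both off. Your alternative ``direct'' route, as you yourself observe, does not close without the $(d-1)$-dimensional polar Jacobian, which is exactly what the Fourier change of variables encodes; the paper does not pursue that route.
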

\begin{proof}
In the course of the proof of \cite[Theorem 5.6]{Mattila15} it is shown that
\[
\int_{S^{d-1}} \int_{\R^d} |\widehat{\mu}_\theta(x)|^2 \, dx\, d\rho(\theta) \lesssim_d \int_{\R^d} |\widehat{\mu}(x)|^2 (1+|x|)^{-\kappa}\,dx.
\]
Note that our $\kappa$ corresponds to $\tau$ in \cite[Theorem 5.6]{Mattila15}, and $t=1$ in our setting. The well-known expression of the energy in terms of the Fourier transform (see e.g. \cite[Theorem 3.10]{Mattila15}) together with Plancherel yields the result.
\end{proof}

To conclude this section, we combine Theorems \ref{thm:projection-Bourgain}, \ref{thm:projection-Kaufman} and \ref{thm:projection-Falconer} in the language of robust measures.
\begin{theorem} \label{thm:projection-combined}
Given $0<\kappa<1$ there exists $\eta=\eta(\kappa)>0$ such that the following holds for all sufficiently small $\delta\le \delta_0(\kappa)$ and all sufficiently large $m\ge m_0(\delta)$. Fix $\alpha\in [0,d]$.  Let $\mu\in\cP_m^d$, and let $\rho\in\cP(S^{d-1})$ satisfy
\begin{align*}
\mu(B(x,r)) &\le 2^{\delta m} r^\alpha \quad\text{for all } r\in [2^{-m},1], x\in [0,1)^d,\\
\rho(H^{(r)}\cap S^{d-1}) &\le 2^{\delta m} r^\kappa \quad\text{for all } r\in [2^{-m},1], H\in \mathbb{G}(d,d-1).
\end{align*}

Then there is a set $E\subset S^{d-1}$ with $\rho(E)\le 2^{-\delta m}$ such that $\mu_\theta$ is $(\gamma(\alpha),\delta,m)$-robust for all $\theta\in S^{d-1}\setminus E$, where
\[
\gamma(\alpha) = \left\{
\begin{array}{ccc}
  \alpha-6\delta & \text{ if } & \alpha < \kappa/2 \\
  \alpha/d+\eta & \text{ if } & \kappa/2 \le \alpha \le d-\kappa/2 \\
  1-6\delta & \text{ if } &  \alpha > d-\kappa/2
\end{array}
\right..
\]
\end{theorem}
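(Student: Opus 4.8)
The plan is to split into the three regimes according to the size of $\alpha$ and apply the appropriate projection theorem in each, then translate the resulting estimates into the robust-measure language using the tools of \S\ref{subsec:robust-measures}.

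\textbf{The middle regime $\kappa/2 \le \alpha \le d-\kappa/2$.} Here I would apply Bourgain's projection theorem (Theorem \ref{thm:projection-Bourgain}) essentially directly, with the roles of the parameters as follows: the Frostman bound $\mu(B(x,r))\le 2^{\delta m}r^\alpha$ gives, after summing over the $\approx r^{-\alpha}$ cubes of $\cD_m$ in a ball, the non-concentration condition $\cN(X\cap B(x,r),m)\le 2^{O(\delta) m}r^{\kappa'}\cN(X,m)$ for $\kappa' = \min(\alpha,\kappa)$ (one must be slightly careful: the hypothesis of Theorem \ref{thm:projection-Bourgain} asks for the exponent $\kappa$ but here the set has only $\alpha$-dimensional non-concentration if $\alpha<\kappa$, so one should run Bourgain's theorem with parameter $\min(\alpha,\kappa)$ in place of $\kappa$, which is still bounded below by $\kappa/2$). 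The hypothesis on $\rho$ is exactly the one provided. Bourgain's theorem yields, for $\theta$ outside an exceptional set of $\rho$-measure $\le 2^{-\delta' m}$, the bound $\log\cN(P_\theta X',m)\ge m(\alpha/d+\eta')$ for every large subset $X'$; taking $X$ to be the union of cubes of $\cD_m$ meeting $\supp(\mu)$ and unwinding the definition of robustness (a set $A$ with $\mu_\theta(A)\ge 2^{-\delta m}$ pulls back to a set of $\mu$-measure $\ge 2^{-\delta m}$, hence — using the upper Frostman bound to see $\mu$ is comparable to normalized counting measure on $X$ up to $2^{\delta m}$ — to a union $X'$ of at least $2^{-O(\delta)m}\cN(X,m)$ cubes) gives that $\mu_\theta$ is $(\alpha/d+\eta,\delta,m)$-robust after shrinking $\eta$ and $\delta$. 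One has to check the $\delta$ here can be chosen independent of $\alpha$ in this range; this is fine because $\alpha$ ranges over the compact set $[\kappa/2,d-\kappa/2]$ and $\delta,\eta$ depend continuously on $\alpha$ there.

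\textbf{The low regime $\alpha<\kappa/2$.} Here Bourgain's theorem is useless (and its $\delta$ would degenerate), so I use Theorem \ref{thm:projection-Kaufman} (Kaufman's bound). Pick $\sigma = \alpha - 4\delta$, say (so $\sigma<\kappa$ is automatic since $\alpha<\kappa/2$). The Frostman condition $\mu(B(x,r))\le 2^{\delta m}r^\alpha$ down to scale $2^{-m}$ gives $\cE_\sigma(\mu)\le 2^{O(\delta)m}$ by the standard dyadic estimate of the energy integral (splitting the integral over $|x-y|\in[2^{-m},1]$ and bounding the contribution of $|x-y|<2^{-m}$ trivially by $2^{\sigma m}\le 2^{\delta m}$). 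Then $\int \cE_\sigma(\mu_\theta)\,d\rho(\theta)\le (1+C_\kappa)2^{O(\delta)m}$, so by Chebyshev, outside a set $E$ of $\rho$-measure $\le 2^{-\delta m}$ we have $\cE_\sigma(\mu_\theta)\le 2^{O(\delta)m}$. Finally, an energy bound converts to robustness: if $\mu_\theta(A)\ge 2^{-\delta m}$ then $\cE_\sigma((\mu_\theta)_A)\le 2^{2\delta m}\cE_\sigma(\mu_\theta)\le 2^{O(\delta)m}$, and a set carrying a measure of $\sigma$-energy $\le 2^{O(\delta)m}$ must meet $\gtrsim 2^{(\sigma-O(\delta))m}$ cubes of $\cD_m$ (otherwise the energy of any probability measure on it would be $\gtrsim 2^{\sigma m'}$ with $m'$ the log-cardinality, too large). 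Choosing constants so that $\sigma - O(\delta) = \alpha - 6\delta$ gives $(\alpha-6\delta,\delta,m)$-robustness.

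\textbf{The high regime $\alpha>d-\kappa/2$.} This is the $L^2$ regime; I use Theorem \ref{thm:projection-Falconer} (Falconer's bound). Note $d-\alpha<\kappa/2<\kappa$, so the hypothesis there — a Frostman bound on $\rho$ with exponent $\kappa$, which certainly implies one with exponent $d-\alpha$ (for $r\le1$) — applies with $d-\kappa$ replaced by anything $\ge d-\alpha$; more precisely run it with exponent $\kappa' = d - (\alpha-4\delta)$, requiring $\cE_{d-\kappa'}(\mu) = \cE_{\alpha-4\delta}(\mu) < \infty$, which holds with bound $2^{O(\delta)m}$ as in the low regime. We get $\int\|\mu_\theta\|_2^2\,d\rho(\theta)\lesssim 2^{O(\delta)m}$, hence outside $E$ with $\rho(E)\le 2^{-\delta m}$, $\|\mu_\theta\|_2^2\le 2^{O(\delta)m}$. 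An $L^2$ bound gives robustness in $\R$: if $\mu_\theta(A)\ge 2^{-\delta m}$ and $A$ meets only $2^{sm}$ intervals of $\cD_m$ in $\R$, then by Cauchy--Schwarz $2^{-2\delta m}\le \mu_\theta(A)^2 \le \|\mu_\theta\|_2^2\cdot |A^{(2^{-m})}|\lesssim 2^{O(\delta)m}2^{(s-1)m}$, forcing $s\ge 1-O(\delta)$; choosing constants gives $(1-6\delta,\delta,m)$-robustness. (The target exponent $1$ here — rather than $d$ — is correct because the image lives in $\R$.)

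\textbf{Main obstacle.} The essential points are (i) keeping $\delta$ uniform: in the two outer regimes this is automatic because Kaufman's and Falconer's constants depend only on $\kappa$, and in the middle regime it follows from compactness and continuity in $\alpha$; (ii) the bookkeeping at the three ``seams'' — one must make sure the exceptional sets and the various $2^{O(\delta)m}$ losses can all be absorbed into a single final $\delta$ and that $\gamma(\alpha)$ is genuinely a valid robustness exponent in each piece (in particular that $\gamma(\alpha)<\alpha/d+\eta$ doesn't accidentally exceed what the outer theorems give — it does not, since $\alpha-6\delta<\alpha/d$ for small $\alpha$ and $1-6\delta$ is the trivial ceiling for projections to $\R$). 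I expect the conversion of energy/$L^2$ bounds into robustness (the covering-number extractions in the low and high regimes) to be the part requiring the most care, though each is a routine Chebyshev/Cauchy--Schwarz argument.
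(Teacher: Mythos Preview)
Your outer two regimes are essentially the paper's argument (with cosmetic differences in the choice of energy exponent), and the bookkeeping you outline for them is fine. The gap is in the middle regime, in the sentence ``using the upper Frostman bound to see $\mu$ is comparable to normalized counting measure on $X$ up to $2^{\delta m}$.'' This is false as written: the Frostman hypothesis $\mu(B(x,r))\le 2^{\delta m}r^\alpha$ gives only an \emph{upper} bound $\mu(Q)\lesssim 2^{(\delta-\alpha)m}$ for $Q\in\cD_m$, and no lower bound whatsoever. The support $X$ could have $\cN(X,m)$ as large as $2^{dm}$, with most cubes carrying negligible mass. In that situation a set $A$ with $\mu(P_\theta^{-1}A)\ge 2^{-\delta m}$ may pull back to only $\approx 2^{(\alpha-2\delta)m}$ cubes, far fewer than $2^{-O(\delta)m}\cN(X,m)$, so the ``large subset'' hypothesis of Bourgain's theorem fails. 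The same issue obstructs verifying the set non-concentration $\cN(X\cap B(x,r),m)\le 2^{O(\delta)m}r^{\kappa'}\cN(X,m)$ in the first place.

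The paper's fix is a dyadic level-set (pigeonholing) decomposition: write $X_j=\bigcup\{Q\in\cD_m: 2^{-j-1}<\mu(Q)\le 2^{-j}\}$, discard those $j$ with $\mu(X_j)<2^{-2\delta m}$ (their union has small mass), and apply Bourgain's theorem to each surviving $X_j$ separately. On $X_j$ one genuinely has $\mathbf{1}_{X_j}/|X_j|\le 2\mu_{X_j}$, so the Frostman bound on $\mu$ transfers to the counting non-concentration for $X_j$, and a set of large $\mu_{X_j}$-measure really does contain a large fraction of the cubes of $X_j$. The exceptional set $E$ is then the union of the $O(m)$ exceptional sets $E_j$, still of small $\rho$-measure. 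This step is standard but not optional; without it the middle-regime argument does not go through.
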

\begin{proof}
\textbf{First case: $\alpha\in [\kappa/2,d-\kappa/2]$}: As already noted, we may assume that in Theorem \ref{thm:projection-Bourgain} the same values of $\delta$ and $\eta$ work for all $\alpha\in [\kappa/2, d-\kappa/2]$ (hence $\delta,\eta$ depend on $\kappa$ only). We will in fact assume that $\delta=\delta(\kappa)$ is small enough that the conclusion of Theorem \ref{thm:projection-Bourgain} holds for $4\delta$ in place of $\delta$.

Theorem \ref{thm:projection-Bourgain} applies to sets rather than measures, as in our current context. We will use a fairly standard argument to deal with this. For an integer $j\ge 0$, let
\[
X_j = \bigcup\{ Q\in\cD_m: 2^{-j-1}<\mu(Q)\le 2^{-j} \},
\]
and set $J=\{ j: \mu(X_j)\ge 2^{-2\delta m}\}$. Note that
\[
\mu\left(\bigcup_{j=2dm}^\infty X_j\right) \le 2^{dm} 2^{-2dm} = 2^{-dm}< 2^{-2\delta m}.
\]
In particular, $J\subset \{0,\ldots,2dm-1\}$, and if we set $Z=[0,1)^d\setminus \cup_{j\in J} X_j$, then
\begin{equation} \label{eq:bad-set-small-measure}
\mu(Z) \le 2dm 2^{-2\delta m}+ 2^{-dm} \le 3dm 2^{-2\delta m}.
\end{equation}
Now the non-concentration condition on $\mu$ implies that, for $j\in J$,
\[
\mu_{X_j}(B(x,r)) \le \mu(X_j)^{-1} \mu(B(x,r)) \le 2^{3\delta m} r^{\alpha} \quad( x\in [0,1)^d, r\in [2^{-m},1]).
\]
In particular,
\[
\mu_{X_j}(Q) \lesssim 2^{(3\delta-\alpha) m} \quad\text{for all }Q\in\cD_m,
\]
so that $\cN(X_j,m) \gtrsim 2^{(\alpha-3\delta)m}$. Also, by definition, $\mathbf{1}_{X_j}/|X_j| \le 2 \mu_{X_j}$. Hence
\[
|X_j\cap B(x,r)| \lesssim 2^{3\delta m}|X_j| r^\alpha   \quad( x\in [0,1)^d, r\in [2^{-m},1]),
\]
and since $X_j$ is a union of $2^{-m}$-cubes, this translates into a corresponding bound for the counting numbers $\cN(X_j\cap B(x,r),m)$.

We have checked that the hypotheses of Theorem \ref{thm:projection-Bourgain} hold for $X_j$, $j\in J$ (with $4\delta$ in place of $\delta$). Let $E_j$ be the exceptional set given by the theorem, and define $E=\cup_{j\in J} E_j$. Note that
\[
\rho(E)\le |J| 2^{-4\delta m} \le 2dm 2^{-4\delta m} < 2^{-\delta m}.
\]
Fix $\theta\in S^{d-1}\setminus E$, and let $A$ be a set with $P_\theta\mu(A)\ge 2^{-\delta m}$. It follows from \eqref{eq:bad-set-small-measure} and the decomposition $\mu=\mu(Z)\mu_Z+\sum_{j\in J} \mu(X_j)\mu_{X_j}$ that there is $j\in J$ such that
\[
\mu_{X_j}(P_\theta^{-1}A)\ge \mu(P_\theta^{-1} A)-\mu(Z) \ge \tfrac{1}{2} 2^{-\delta m},
\]
and hence $|X_j\cap P_\theta^{-1}A|\ge \tfrac{1}{4}2^{-\delta m}|X_j|$. Again using the fact that $X_j$ is a union of cubes in $\cD_m$, we get a corresponding estimate for counting numbers. Since $\theta\notin E_j$, Theorem \ref{thm:projection-Bourgain} implies that
\[
\log \cN(A, m) \ge (\alpha/d+\eta)m,
\]
and thus we have verified that $P_\theta\mu$ is $(\alpha/d+\eta,\delta,m)$-robust, as desired.

\textbf{Second case: $\alpha <\kappa/2$}: It follows from the non-concentration assumption on $\mu$ that
 \begin{align*}
 \cE_{\alpha}(\mu)  &\lesssim \sum_{p=0}^{m-1} 2^{p\alpha} (\mu\times\mu) \{ (x,y): |x-y|\le 2^{1-p} \} \\
 &\le \sum_{p=0}^{m-1} 2^{p\alpha}  \max_x\{ \mu(B(x,2^{1-p})) \} \lesssim m 2^{\delta m}.
 \end{align*}
 Hence we get from Theorem \ref{thm:projection-Kaufman} (applied with $C=2^{\delta m}$ and $\alpha$ in place of $\sigma$) that
 \[
 \int_{S^{d-1}} \cE_\alpha(\mu_\theta)\,d\rho(\theta) \lesssim_{\kappa} m 2^{2\delta m},
 \]
and therefore $\rho(E) \le 2^{-\delta m}$, where
 \[
 E = \{ \theta\in S^{d-1}: \cE_\alpha(\mu_\theta) \ge C_\kappa m 2^{3\delta m}\},
 \]
 for a suitable $C_\kappa>0$. Fix $\theta\in S^{d-1}\setminus E$ and suppose $\mu_\theta(A)\ge 2^{-\delta m}$. Then, writing $\nu=(\mu_\theta)_A$,
 \[
 \cE_\alpha(\nu) \lesssim \mu_\theta(A)^{-2} \cE_\alpha(\mu_\theta) \le C_\kappa m 2^{5\delta m}.
  \]
 On the other hand, it follows e.g. from \cite[Lemma 3.1]{KeletiShmerkin19} and Cauchy-Schwarz that
  \[
  \cE_\alpha(\nu) \gtrsim_{\alpha} 2^{\alpha m}\sum_{I\in\cD_m} \nu(I)^2 \ge 2^{\alpha m}\cN(A,m)^{-1}.
  \]
  Combining the last two displayed equations we see that $\mu_\theta$ is $(\alpha-6\delta,\delta,m)$-robust if $m$ is large enough, as claimed.

\textbf{Third case: $\alpha >d-\kappa/2$}. To begin, we note that, arguing as above and using the non-concentration assumption on $\mu$,
\[
 \cE_{d-\kappa}(\mu) \lesssim \sum_{p=0}^{m-1} 2^{p(d-\kappa)}  \max_x\{ \mu(B(x,2^{1-p})) \} \lesssim_\kappa 2^{\delta m}.
\]
Applying Theorem \ref{thm:projection-Falconer} to $2^{-\delta m}\rho$, we deduce that, provided $m$ is large enough,
\[
\int_{S^{d-1}} \|\mu_\theta\|_2^2 \,d\rho(\theta) \lesssim 2^{2\delta m}.
\]
Let $E=\{ \theta: \|\mu_\theta\|_2^2 \ge C_d 2^{3\delta m}\}$, where $C_d$ is large enough, so that $\rho(E)\le 2^{-\delta m}$. Fix $\theta\in S^{d-1}\setminus E$ and $A$ such that $\mu_\theta(A)\ge 2^{-\delta m}$. Writing again $\nu=(\mu_\theta)_A$, we have $\|\nu\|_2^2 \le \mu_\theta(A)^{-2}\|\mu_\theta\|_2^2 \le C_d 2^{5\delta m}$, and by a well-known application of Cauchy-Schwarz (see e.g. \cite[Lemma 6.5]{KeletiShmerkin19}) we conclude that $\cN(A,m) \ge 2^{m} \|\nu\|_2^{-2} \ge 2^{(1-6\delta)m}$, confirming that $\mu_\theta$ is $(1-6\delta,\delta,m)$-robust.
\end{proof}

\section{Multiscale decompositions of regular measures}
\label{sec:multiscale-decompositions}

\subsection{A new multiscale decomposition}

The goal of this section is to establish Theorem \ref{thm:multiscale-decomposition}, providing a new kind of multiscale decomposition of a regular measure $\mu$. Recall that, by Lemma \ref{lem:decomposition-regular}, one can decompose an arbitrary $2^{-m}$-measure into regular pieces plus an error term so, as we will see, this decomposition is also useful to study general measures.

Roughly speaking, the conclusion of the theorem says that given a regular measure $\mu$, one can find a sequence of scales $2^{-m_j}$, such that for $Q\in\cD_{m_j}$, the conditional measures $\mu^Q$ satisfy a near-Frostman decay condition. Moreover, and crucially, for a positive density of scales (weighted according to the measure), the Frostman exponent is bounded away from $0$ and $1$. This last claim fails if $\mu$ is the uniform measure on a square, and the assumptions of the theorem are meant precisely to avoid this counterexample. Moreover, the scales $B_j$ can be chosen to satisfy $B_{j+1}\le 2B_j$, which is critical for linearization arguments.
\begin{theorem} \label{thm:multiscale-decomposition}
For every $u>0$ and $\e>0$ there are $\xi=\xi(u)>0$ and $\tau=\tau(\e)>0$ such that the following holds for all sufficiently large $T\ge T_0(\e)$ and  $\ell\ge \ell_0(T,\e)$:

Let $\mu$ be a $(\sigma;T)$-regular measure on $[0,1)^d$ with (dyadic) support $X$, and write $m=\ell T$. Suppose
\begin{equation} \label{eq:single-scale-Frostman}
\mu(B(x,|X|^{1/d})) \le 2^{-u m}\quad \text{for all }x.
\end{equation}
Then there are a collection of pairwise disjoint intervals $\{[A_j,B_j)\}$ contained in $[0,\ell)$ and numbers $\alpha_j\in [0,d]$, such that the following hold:
\begin{enumerate}[(\rm i)]
  \item \label{it:multiscale-i} $ \tau \ell \le B_j - A_j \le A_j$ for all $j$.
  \item \label{it:multiscale-ii}  Write $m_j= T(B_j-A_j)$. For each $Q\in \cD_{T A_j}$,
\[
  \mu^Q(B(x,r)) \le 2^{\e m_j} r^{\alpha_j}\quad\text{for all } r\in [2^{-m_j},1].
\]
  \item \label{it:multiscale-iii}
  \[
  \sum_j \alpha_j m_j \ge \log \cN(X,m) - 2\e m. % \left(\tfrac{1}{\ell}(\sigma_1+\ldots+\sigma_\ell)-\e\right)m,
  \]
  \item \label{it:multiscale-iv}
  \[
  \sum\{ m_j: \alpha_j \in [\xi,d-\xi]\} \ge \xi m.
  \]
\end{enumerate}
\end{theorem}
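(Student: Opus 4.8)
The plan is to reduce the statement to a purely combinatorial fact about the piecewise-linear (Lipschitz) function built from the partial sums $\beta_j(\sigma)$, and then apply that combinatorial fact. Concretely, associate to the regular measure $\mu$ the function $g:[0,\ell]\to[0,d]$ whose graph interpolates the points $(j,\, j\beta_j(\sigma)) = (j,\, \sigma_1+\cdots+\sigma_j)$ linearly on each $[j-1,j]$; thus $g$ is $d$-Lipschitz, $g(0)=0$, and on $[A,B]$ the ``average slope'' $(g(B)-g(A))/(B-A)$ is exactly the Frostman-type exponent of the conditional measures $\mu^Q$ at the intermediate scales, by Lemma \ref{lem:frostman-regular}\eqref{it:frostman-regular-i}. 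The exponent $\alpha_j$ attached to an interval $[A_j,B_{j+1})$ (or really $[A_j,B_j)$ in the notation of \eqref{it:multiscale-ii}) should be this average slope, possibly rounded down slightly so that the decay $\mu^Q(B(x,r))\le 2^{\e m_j}r^{\alpha_j}$ holds for \emph{all} $r\in[2^{-m_j},1]$ and not just at the endpoint — this is where one needs to pass from a single scale to all scales, and it forces choosing the intervals so that on each of them $g$ lies above the chord by no more than $\e(B-A)$; such a decomposition into ``nearly-linear'' sub-intervals of a Lipschitz function always exists with the sub-intervals not too short (this is the first combinatorial lemma, and it delivers \eqref{it:multiscale-i} and \eqref{it:multiscale-ii}).

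For \eqref{it:multiscale-iii}, the point is that $\sum_j \alpha_j m_j \approx T\sum_j (g(B_j)-g(A_j))$, and since the intervals $[A_j,B_j)$ exhaust $[0,\ell)$ up to the parts where we have to discard short initial segments (the constraint $B_j-A_j\le A_j$ costs us only a geometrically small initial portion, handled by starting the first interval at a scale $\ge \tau\ell$), the telescoping sum $\sum_j (g(B_j)-g(A_j))$ differs from $g(\ell)-g(0) = \ell\beta(\sigma)$ by at most an $O(\e\ell)$ error coming from the gaps. So \eqref{it:multiscale-iii} is essentially a telescoping/accounting argument once the decomposition is fixed.

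The main obstacle — and the one the hypothesis \eqref{eq:single-scale-Frostman} is designed to overcome — is \eqref{it:multiscale-iv}: we must guarantee that a definite proportion ($\xi m$ worth) of the total scale budget is spent on intervals whose slope $\alpha_j$ is \emph{bounded away from both $0$ and $d$}. Without \eqref{eq:single-scale-Frostman} this is false (take $\mu$ = Lebesgue on $[0,1)^d$, where $g(t)=dt$ and every slope is $d$). The translation of \eqref{eq:single-scale-Frostman} via Lemma \ref{lem:frostman-regular} is that $\beta(\sigma)$ is bounded below, i.e. $g(\ell)\ge u\ell$ roughly, which rules out $g\equiv 0$; combined with $g$ being $d$-Lipschitz (so it cannot be identically $\equiv d\cdot t$ either if it is to have slope $d$ only on a tiny set... — actually the real content is subtler). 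The key combinatorial lemma I would isolate is: \emph{if $g:[0,\ell]\to[0,d]$ is $d$-Lipschitz with $g(0)=0$ and $g(\ell)\ge u\ell$, and if in addition $g$ is not concentrated — which is the precise form \eqref{eq:single-scale-Frostman} takes, preventing $g$ from having slope $\approx d$ on one block and slope $\approx 0$ elsewhere in a way that would make the ``intermediate-slope'' intervals negligible — then any decomposition into nearly-linear pieces must assign total length $\ge \xi\ell$ to pieces with slope in $[\xi, d-\xi]$}. I expect the proof of this lemma to be the technical heart: one argues by contradiction, supposing almost all the length is in ``slope $\approx 0$'' or ``slope $\approx d$'' pieces, and then shows that $g$ (hence $\mu$) would have to be concentrated near a single cube of side $\approx|X|^{1/d}$, contradicting \eqref{eq:single-scale-Frostman}. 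Making the quantitative dependence $\xi=\xi(u)$ explicit, and checking it survives the rounding of slopes and the discarding of short intervals (which is why $\e$ must be small relative to $u$, i.e. $\e<\e_1(u)$, and $\tau$ small relative to $\e$), is the delicate bookkeeping I would save for last.
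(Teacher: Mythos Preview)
Your overall framework matches the paper's: encode $\mu$ as a $d$-Lipschitz function (the paper uses $f=g/d$, $1$-Lipschitz), decompose $[0,\ell]$ into pieces on which $f$ is close to affine, read off the Frostman exponent $\alpha_j$ as the slope of the chord, and telescope for \eqref{it:multiscale-iii}. That part is fine.

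There are two genuine gaps.

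\emph{First (minor, but a real sign error).} For \eqref{it:multiscale-ii} you need the partial sums $\sigma_{A_j+1}+\cdots+\sigma_{A_j+k}$ to be \emph{large}, so that $\mu^Q$-mass of a $2^{-kT}$-cube is small. In terms of $g$ this is $g(A_j+k)\ge L_{g,A_j,B_j}(A_j+k)-\e(B_j-A_j)$, i.e.\ $g$ must not dip too far \emph{below} the chord. You wrote ``$g$ lies above the chord by no more than $\e(B-A)$'', which is the wrong inequality. The paper calls the correct one-sided condition \emph{$\e$-superlinearity}; mere $\e$-linearity would also work but is stronger than necessary, and the weaker superlinear condition is what allows the constructive proof of the key combinatorial step below.

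\emph{Second (the main gap).} You correctly note that ``$g(\ell)\ge u\ell$'' alone does not give \eqref{it:multiscale-iv} (take $\sigma_j=d$ on an initial block, then $0$). But your replacement hypothesis --- ``$g$ is not concentrated'' --- is too vague to prove anything, and your proposed contradiction argument does not have a clear mechanism. The paper's crucial observation, which you are missing, is the \emph{precise} translation of \eqref{eq:single-scale-Frostman}: the scale $|X|^{1/d}$ corresponds, via $|X|\approx 2^{-(1-\beta/d)m}$, to the point $(1-s)\ell$ on the $x$-axis, where $s=\beta(\sigma)/d=f(\ell)/\ell$; and the bound $\mu(B(x,|X|^{1/d}))\le 2^{-um}$ becomes, via Lemma~\ref{lem:frostman-regular}\eqref{it:frostman-regular-i},
\[
f\big((1-s)\ell\big)\ \ge\ \tfrac{u}{2d}\,\ell.
\]
Together with $f(\ell)=s\ell$ and $s\in(0,1)$ bounded away from the endpoints (also a consequence of \eqref{eq:single-scale-Frostman}), this is a clean, checkable hypothesis on a single Lipschitz function. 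The combinatorial heart of the paper is then a \emph{constructive} proposition: a non-decreasing $1$-Lipschitz $f:[0,\ell]\to\R$ with $f(0)=0$, $f(\ell)=s\ell$, and $f((1-s)\ell)\ge t\ell$ admits an $\e$-superlinear decomposition in which a $\xi(s,t)$-fraction of the length has slope in $[\xi,1-\xi]$. The proof is not by contradiction: one first decomposes each of finitely many sub-intervals into $\e$-superlinear pieces with \emph{increasing} slopes (this monotonicity is the point of using superlinear rather than linear pieces), then argues that the hypotheses force, somewhere, a piece of slope near $1$ to be immediately followed by a piece of slope near $0$, and merges the two into a single superlinear interval of intermediate slope. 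Your sketch does not contain this mechanism, and without the pointwise lower bound on $f$ at the specific location $(1-s)\ell$, there is no hook to run such an argument.
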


Theorem \ref{thm:multiscale-decomposition} will be proved in the rest of the section. Following \cite{KeletiShmerkin19}, the problem is translated into one about Lipschitz functions on the line, but both the statement of the problem and the solution differ substantially from \cite{KeletiShmerkin19}.

\subsection{Decompositions of Lipschitz functions into almost linear/superlinear pieces}

In this section we deal with the following kind of problem: given a Lipschitz function $f:[a,b]\to\R$, we aim to find non-overlapping intervals $I_j$ such that $f|_{I_j}$ is close to linear/bounded below by a linear function, and the union of the $I_j$ exhaust most of the original interval $[a,b]$. We start by making some of these concepts precise.
\begin{definition}
Given a function $f:[a,b]\to\R$, we let
\[
s_f(a,b) = \frac{f(b)-f(a)}{b-a}
\]
be the slope of the linear function that agrees with $f$ on $a$ and $b$. We also write
\[
L_{f,a,b}(x) = f(a) +s_f(a,b)(x-a)
\]
for the linear function that agrees with $f$ at $a$ and $b$. We say that \emph{$(f,a,b)$ is $\e$-linear} if
\[
\big|f(x)-  L_{f,a,b}(x) \big|\le \e |b-a|\quad\text{for all } x\in [a,b].
\]
 Likewise, we say that \emph{$(f,a,b)$ is $\e$-superlinear} if
\[
f(x) \ge L_{f,a,b}(x) - \e|b-a| \quad\text{for all } x\in [a,b].
\]
Sometimes we say that $f$ is linear/superlinear on $[a,b]$ to mean that $(f,a,b)$ is linear/superlinear.
\end{definition}

The following is our basic lemma for finding intervals on which $f$ is $\e$-linear. A small variant of the lemma was posed as a problem on the $20$-th Annual {V}ojt\v{e}ch {J}arn\'{\i}k International Mathematical Competition; I thank K.~H\'{e}ra for pointing this out. The proof below is repeated from \cite{Ostrava10}.

\begin{lemma} \label{lem:tube-null-1}
For every $\e>0$ there is $\delta=\delta(\e)>0$ such that the following holds: for any $1$-Lipschitz function $f:[a,b]\to \R$ there exists a sub-interval $[c,d]\subset [a,b]$ with $(d-c)\ge \delta (b-a)$ such that $(f,c,d)$ is $\e$-linear. In fact, $\delta=\e^{\lfloor 1/\e\rfloor }$ works.
\end{lemma}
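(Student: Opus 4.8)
The plan is to prove the lemma by a greedy descent on $[a,b]$. Starting from $I_0=[a,b]$, I will repeatedly replace the current interval by one of the two pieces into which it splits at a point of maximal deviation from its chord, continuing for as long as the current interval is not $\e$-linear. Two facts will drive the argument: (1) each replacement produces a new interval whose length is at least $\e$ times the old one; and (2) at most $\lfloor 1/\e\rfloor$ replacements can occur. Granting these, the interval at which we stop is $\e$-linear and has length at least $\e^{\lfloor 1/\e\rfloor}(b-a)$, which is the claim. We may assume $0<\e<1$: for $\e\ge 1$ one has $\delta=\e^{\lfloor 1/\e\rfloor}=1$ and every interval is already $\e$-linear, since $|f(x)-L_{f,a,b}(x)|\le\tfrac12|b-a|$ for any $1$-Lipschitz $f$ (an elementary consequence of $|f(x)-f(a)|\le x-a$ and $|f(x)-f(b)|\le b-x$).

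The crux is a one-step statement. Suppose $I=[a',b']\subseteq[a,b]$ and $(f,a',b')$ is not $\e$-linear; let $x^*\in(a',b')$ be a point at which $|f-L_{f,a',b'}|$ attains its maximum $\Delta$, so $\Delta>\e(b'-a')$, and put $\delta_0=f(x^*)-L_{f,a',b'}(x^*)$, so $|\delta_0|=\Delta$. Substituting into the definitions gives
\[
 s_f(a',x^*)=s_f(a',b')+\frac{\delta_0}{x^*-a'}, \qquad s_f(x^*,b')=s_f(a',b')-\frac{\delta_0}{b'-x^*},
\]
so (using $x^*-a',\,b'-x^*\le b'-a'$ and $\Delta>\e(b'-a')$) one of the two pieces of $I$ has slope $>s_f(a',b')+\e$ and the other has slope $<s_f(a',b')-\e$. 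Using only that $f$ is $1$-Lipschitz, together with the same identities, one then checks: if $s_f(a',b')\ge 0$ the piece with the larger slope has length $>\e(b'-a')$, while if $s_f(a',b')\le 0$ the piece with the smaller slope does. I pass to that piece, producing $I'\subseteq I$ with $|I'|>\e|I|$ on which the slope of $f$ has the same sign as on $I$ and absolute value larger by more than $\e$.

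Iterating the one-step statement starting from $I_0=[a,b]$ (replacing $I_j$ by the subinterval $I_{j+1}$ it produces, as long as $I_j$ is not $\e$-linear), the sign of the slope of $I_j$ becomes locked after the first step — say positive, the negative case being identical after replacing $f$ by $-f$ — and each subsequent step raises that positive slope by more than $\e$. Hence, if the process reaches $I_j$ and $I_j$ is still not $\e$-linear ($j\ge 1$), its slope exceeds $j\e$, and the one-step statement then exhibits a subinterval of $I_j$, hence of $[a,b]$, with slope exceeding $(j+1)\e$; since $f$ is $1$-Lipschitz this forces $(j+1)\e<1$. As $(\lfloor 1/\e\rfloor+1)\e>1$, the process cannot continue past step $\lfloor 1/\e\rfloor$, so it stops at some $J\le\lfloor 1/\e\rfloor$, and then $[c,d]:=I_J$ is $\e$-linear with $d-c>\e^{J}(b-a)\ge\e^{\lfloor 1/\e\rfloor}(b-a)$.

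I expect the only delicate part to be the bookkeeping in the one-step statement: one has to verify that, for each sign of $s_f(a',b')$, a single choice of piece simultaneously secures the slope gain ($>\e$) and the length bound ($>\e(b'-a')$), and that this choice never flips the sign of the slope — this is what keeps the slopes advancing monotonically toward $\pm 1$ and caps the number of steps at $\lfloor 1/\e\rfloor$ rather than something like $2/\e$. The remaining points — that the maximum of $|f-L_{f,a',b'}|$ is attained at an interior point (continuity of $f$, and $f-L_{f,a',b'}$ vanishes at $a'$ and $b'$), and the telescoping of the length bounds at the end — are routine.
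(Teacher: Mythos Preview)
Your proof is correct and follows essentially the same approach as the paper's: an iterative refinement in which each step produces a subinterval of length at least $\e$ times the previous one, with slope moved by more than $\e$ in a fixed direction, so that the $1$-Lipschitz bound caps the number of steps at $\lfloor 1/\e\rfloor$. The only cosmetic difference is that the paper reduces to the case $f(x)>L_{f,a,b}(x)$ via the flip $\wt f(x)=-f(a+b-x)$ and then always takes the left subinterval, whereas you work with the point of maximal deviation and select the appropriate piece by a direct case analysis on the signs of $\delta_0$ and $s_f(a',b')$; both routes yield the same one-step claim and the same constant $\delta=\e^{\lfloor 1/\e\rfloor}$.
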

\begin{proof}
By replacing $f$ with $-f$ if needed, we may assume that $f(b)\ge f(a)$. We claim that if $(f,a,b)$ is \emph{not} $\e$-linear, then there exists an interval $[a',b']\subset [a,b]$ with $b'-a' \ge \e(b-a)$ such that $s_f(a',b') \ge s_f(a,b)+\e$. Suppose, then, that $(f,a,b)$ is not $\e$-linear, which by definition means that there is $x\in [a,b]$ such that
\[
\big|f(x)-  L_{f,a,b}(x) \big|> \e (b-a).
\]
Replacing, if needed, $f$ by the flip $\wt{f}(x)=-f(a+b-x)$ and $x$ by $a+b-x$, we may assume that $f(x)-L_{f,a,b}(x) > \e(b-a)$. We have
\[
s_f(a,x) = \frac{f(x)-f(a)}{x-a} \ge \frac{s_f(a,b)(x-a) + \e(b-a)}{x-a} \ge s_f(a,b)+\e.
\]
On the other hand,
\[
x-a \ge f(x)-f(a) \ge  s_f(a,b)(x-a) + \e(b-a) \ge \e(b-a),
\]
so $[a,x]$ is the claimed interval.

Now let $[a_0,b_0]=[a,b]$, and inductively apply the claim and set $[a_{j+1},b_{j+1}]=[a'_j,b'_j]$ so long as $(f,a_j,b_j)$ is not $\e$-linear. Since $s_f(a_0,b_0)\ge 0$ and $s_f(a_j,b_j)\le 1$, the process must stop in $j\le \lfloor 1/\e\rfloor$ steps. Then $[a_j,b_j]$ is the desired interval.
\end{proof}

By iterating the above lemma, we can cover most of $[a,b]$ by intervals on which $f$ is $\e$-linear. We use $\leb$ to denote Lebesgue measure.
\begin{lemma} \label{lem:tube-null-2}
For every $\e>0$ there is $\tau>0$ such that the following holds: for any $1$-Lipschitz function $f:[a,b]\to\R$ there exists a family of non-overlapping intervals $\{ [c_j,d_j]\}_{j=1}^M$ such that:
\begin{enumerate}[(\rm i)]
  \item $(f,c_j,d_j)$ is $\e$-linear for all $j$.
  \item $d_j-c_j\ge \tau(b-a)$ for all $j$.
  \item $\leb\left([a,b]\setminus \cup_j [c_j,d_j]\right) \le \e(b-a)$.
\end{enumerate}
\end{lemma}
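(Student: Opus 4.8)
The plan is to iterate Lemma~\ref{lem:tube-null-1} in a greedy fashion, peeling off one $\e$-linear interval at a time while keeping track of the total length not yet covered. Write $\delta=\delta(\e)$ for the constant from Lemma~\ref{lem:tube-null-1}, so that every $1$-Lipschitz function on an interval $[a',b']$ has a sub-interval of length $\ge \delta(b'-a')$ on which it is $\e$-linear. The key observation is that what remains after removing finitely many intervals is a finite disjoint union of open intervals, and on each such component the restriction of $f$ is still $1$-Lipschitz, so Lemma~\ref{lem:tube-null-1} applies again there.

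Concretely, I would argue as follows. Fix a small threshold $\tau>0$ to be chosen at the end (it will be of the form $\tau = \delta \rho$ for a suitable $\rho>0$, or simply $\tau$ a small multiple of a power of $\e$). Run the following procedure: maintain a finite collection $\mathcal{U}$ of pairwise disjoint open subintervals of $[a,b]$ whose union is the ``uncovered'' part, starting with $\mathcal{U}=\{(a,b)\}$; at each step, pick a component $I=(a',b')\in\mathcal{U}$ whose length satisfies $|I|\ge \tau(b-a)$ — if none exists, stop — apply Lemma~\ref{lem:tube-null-1} to $f|_{\overline{I}}$ to extract a closed subinterval $[c,d]\subset I$ with $d-c\ge \delta|I|\ge \delta\tau(b-a)$ on which $(f,c,d)$ is $\e$-linear, add $[c,d]$ to the output family, and replace $I$ in $\mathcal{U}$ by the (at most two) components of $I\setminus[c,d]$. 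Since each accepted interval has length $\ge \delta\tau(b-a)$ and they are disjoint inside $[a,b]$, the procedure terminates after $M\le (\delta\tau)^{-1}$ steps; the output family $\{[c_j,d_j]\}$ then automatically satisfies (i) and (ii) with the constant $\tau$ (note $d_j-c_j\ge \delta\tau(b-a)$, so one should really set the ``$\tau$'' in the statement to be $\delta\tau$, or rename).

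For (iii) I need to control the measure of the leftover set $[a,b]\setminus\bigcup_j[c_j,d_j]$. When the procedure halts, every component of the uncovered set has length $<\tau(b-a)$; the trouble is that there could be many such tiny components, so I cannot just bound their number by a constant times their individual length. The right way is a more careful accounting: instead of the crude termination bound, track the quantity $\mathrm{Leb}(\bigcup\mathcal{U})$ and note that whenever we process a component $I$ with $|I|\ge \tau(b-a)$ we remove from it a sub-interval of proportion $\ge\delta$, i.e. we decrease the uncovered measure coming from ``large'' components by a definite fraction. A cleaner version: perform the peeling in \emph{rounds}, where in each round we simultaneously remove an $\e$-linear subinterval from every current component of length $\ge\tau(b-a)$; after one round, each such component of length $L$ has contributed a covered piece of length $\ge\delta L$, so the uncovered measure among large components drops by a factor $(1-\delta)$, while components that were already $<\tau(b-a)$ are simply left alone. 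After $k$ rounds, the uncovered measure is at most $(1-\delta)^k(b-a)$ plus nothing else (small components never grow), so choosing $k$ with $(1-\delta)^k\le \e$, i.e. $k\approx \log(1/\e)/\delta$, gives (iii). The total number of intervals produced is then bounded by $k$ times the max number of components after $k$ rounds, which is at most $2^k$ or so — in any case a finite bound depending only on $\e$ — and the minimal length is $\ge \delta\cdot\tau(b-a)$ times something, still a constant multiple of $(b-a)$ depending only on $\e$. Setting the final $\tau$ in the statement equal to this constant gives (i)--(iii).

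The main obstacle is precisely this bookkeeping in (iii): the naive greedy stopping rule leaves control over the \emph{number} of small leftover components, not their total length, so one must instead organize the extraction into rounds (or run a potential-function argument on the uncovered measure) to get geometric decay of the leftover measure. Everything else — the Lipschitz restriction to subintervals, disjointness, the length lower bounds — is routine once Lemma~\ref{lem:tube-null-1} is available as a black box. I would also double-check the quantitative dependence $\tau=\tau(\e)$ at the end so that it can be plugged into Theorem~\ref{thm:multiscale-decomposition}, but since no explicit value is claimed in the statement, any constant depending only on $\e$ suffices.
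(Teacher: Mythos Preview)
Your approach is essentially the same as the paper's: iterate Lemma~\ref{lem:tube-null-1} on the remaining components to get geometric decay of the uncovered measure, stop after $N(\e)$ rounds once the leftover is at most $(\e/2)(b-a)$, and then discard the intervals shorter than $\tau(b-a)$ using the bound $\le 2^{N(\e)}-1$ on the total number of intervals produced. One small point to clean up: in your ``rounds'' version you process only components of length $\ge \tau(b-a)$, and then assert that the uncovered measure after $k$ rounds is $\le (1-\delta)^k(b-a)$ because ``small components never grow''. That inference is not quite right---small components can be \emph{created} when a large component splits, and once a piece drops below threshold it is never processed again, so its mass does not decay. The paper sidesteps this by applying Lemma~\ref{lem:tube-null-1} to \emph{every} remaining component at each step (no threshold), which gives the clean bound $U_k\le(1-\delta)^k(b-a)$ on the total uncovered measure; only after stopping does it set $\tau=\e\,2^{-(N(\e)+1)}$ and throw away the short intervals. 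With that adjustment your argument is complete and coincides with the paper's.
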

\begin{proof}
Apply Lemma \ref{lem:tube-null-1} to the interval $[a,b]$ to obtain an interval $[x_1,y_1]\subset [a,b]$. Next, apply Lemma \ref{lem:tube-null-1} to the intervals $[a,x_1]$ and $[y_1,b]$ to obtain intervals $[x_2,y_2]$ and $[x_3,y_3]$ (we allow for degenerate intervals). Continue inductively. In each step, a proportion at least $\delta=\delta(\e)$ is removed from the length of the set $[a,b]\setminus \cup_{k=1}^{2^\ell-1} [x_k,y_k]$, so after a number $N(\e)$ of steps the remaining length is at most $(\e/2)|b-a|$. The total number of intervals is $2^{N(\e)}-1$. Hence, if $\tau = \e 2^{-(N(\e)+1)}$, the intervals of length $<\tau|b-a|$ contribute length at most $(\e/2)|b-a|$. Removing them from the collection of all $[x_k,y_k]$ we obtain the desired collection of intervals $[c_j,d_j]$.
\end{proof}

\begin{corollary} \label{cor:tube-null-3}
For every $\e>0$ there is $\tau=\tau(\e)>0$ such that the following holds: for any $1$-Lipschitz function $f:[a,b]\to \R$ there exists a family of non-overlapping intervals $\{ [c_j,d_j]\}_j$, such that:
\begin{enumerate}[(\rm i)]
  \item $(f,c_j,d_j)$ is $\e$-linear for all $j$.
  \item $\tau(b-a) \le d_j - c_j \le  c_j$ for all $j$.
  \item $\leb\left([a,b]\setminus \cup_j [c_j,d_j]\right) \le \e|b-a|$.
\end{enumerate}
\end{corollary}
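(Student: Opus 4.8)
The plan is to deduce the corollary from Lemma~\ref{lem:tube-null-2} by subdividing $[a,b]$ into pieces on which the extra requirement $d_j-c_j\le c_j$ holds automatically, and then applying Lemma~\ref{lem:tube-null-2} separately on each piece. The key point is that ``$d-c\le c$'' is a scale-local condition: every subinterval $[c,d]$ of a dyadic ring $R_k:=[2^{-k-1}b,\,2^{-k}b]$ satisfies $d-c\le|R_k|=2^{-k-1}b\le c$. So it suffices to split $[a,b]$ along the rings $R_0,R_1,\dots$, and the argument will be uniform because only finitely many rings, a number $K_0=K_0(\e)$ depending on $\e$ alone, are actually needed.

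I would distinguish two cases. If $b\le 2a$, then $d-c\le b-a\le a\le c$ for \emph{every} subinterval $[c,d]\subseteq[a,b]$, so the constraint $d-c\le c$ is automatic and one simply applies Lemma~\ref{lem:tube-null-2} to $f$ on $[a,b]$ with parameter $\e$. If $b>2a$, then $b<2(b-a)$; set $K_0=\lceil\log_2(16/\e)\rceil$, so that $\bigcup_{k=0}^{K_0}R_k=[2^{-K_0-1}b,\,b]$ covers $[a,b]$ except for the sliver $[a,\,2^{-K_0-1}b]$, whose length is $\le 2^{-K_0-1}b\le 2^{-K_0}(b-a)\le(\e/8)(b-a)$, and which we simply leave uncovered. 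For each $k\le K_0$ with $I_k:=[a,b]\cap R_k\neq\emptyset$, apply Lemma~\ref{lem:tube-null-2} to the $1$-Lipschitz function $f|_{I_k}$ with parameter $\e/4$: this produces non-overlapping intervals that are $\e/4$-linear (hence $\e$-linear), that lie in $R_k$ (hence satisfy $d-c\le c$), that have length $\ge\tau_0|I_k|$ where $\tau_0=\tau_0(\e/4)$ is the constant of Lemma~\ref{lem:tube-null-2}, and that cover $I_k$ up to a set of measure $(\e/4)|I_k|$.

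Taking the union of all these intervals over $k$ and using $\sum_k|I_k|\le b-a$, the uncovered portion of $[a,b]$ has measure at most $(\e/8)(b-a)+(\e/4)(b-a)<\e(b-a)$, which gives (iii); part (i) is immediate. For the length bound in (ii) I would separate the ``full'' rings, where $I_k=R_k$ and so $|I_k|=2^{-k-1}b\ge 2^{-K_0-1}(b-a)$, from the at most one ``boundary'' ring $I_k=[a,\,2^{-k}b]$: if the latter has length $<(\e/8)(b-a)$ I discard it (adding at most $(\e/8)(b-a)$ to the uncovered set, which keeps the total below $\e(b-a)$), and otherwise keep it, in which case $|I_k|\ge(\e/8)(b-a)$. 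Since $2^{-K_0-1}\le\e/8$, in all retained cases $|I_k|\ge 2^{-K_0-1}(b-a)$, so every retained interval has length at least $\tau_0(\e/4)\,2^{-K_0-1}(b-a)$. Taking $\tau(\e)$ to be the minimum of $\tau_0(\e/4)\,2^{-K_0-1}$ and the constant furnished by Lemma~\ref{lem:tube-null-2} in the first case then completes the argument. The only genuinely new ingredient is the scale-locality observation above, combined with the fact that the $\e$-slack in (iii) lets us throw away both the sliver near $0$ and the single awkward boundary ring; this is exactly what keeps the number of rings, and hence $\tau$, dependent on $\e$ alone. Beyond that the proof is pure constant bookkeeping, and I do not anticipate any real obstacle.
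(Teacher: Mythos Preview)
Your proposal is correct and follows essentially the same strategy as the paper: dyadically subdivide $[a,b]$ into finitely many rings on which $d-c\le c$ is automatic, apply Lemma~\ref{lem:tube-null-2} on each piece, and discard a short leftover near the left endpoint. The paper's version is marginally cleaner --- it uses rings $I_k=[a+2^k(b-a),\,a+2^{k+1}(b-a)]$ for $k=k_0,\ldots,-1$, based at $a$ rather than at $0$, so that every subinterval $[c,d]\subset I_k$ satisfies $d-c\le 2^k(b-a)\le a+2^k(b-a)\le c$ directly, avoiding your case split and boundary-ring bookkeeping.
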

\begin{proof}
Let $\tau=\tau(\e)$ be the number given by Lemma \ref{lem:tube-null-2}. Let $k_0$ be the largest (negative) integer such that $2^{k_0+1}\ge \e$. Apply Lemma \ref{lem:tube-null-2} to the intervals
\[
I_k = [a+2^k(b-a),a+2^{k+1}(b-a)], \quad k=k_0,\ldots,-2,-1,
\]
and collect all resulting intervals. It is easy to check that the conclusion holds with $2\e$ in place of $\e$ and $\e\tau$ in place of $\tau$, which is a formally equivalent statement.
\end{proof}

The next lemma is similar to Lemma \ref{lem:tube-null-2}, but we get the additional information that the slopes of $f$ on the sub-intervals are increasing; the price to pay is that $f$ becomes $\e$-superlinear on the sub-intervals (instead of $\e$-linear).
\begin{lemma} \label{lem:tube-null-4}
Given $\e>0$ there is $\tau=\tau(\e)>0$ such that the following holds. Let $f: [a,b]\to \R$ be a $1$-Lipschitz function. Then there exists a  collection of non-overlapping intervals $\{ [a_k,b_k] \}_k$ such that $b_{k+1}\le a_k$ and:
\begin{enumerate}[(\rm i)]
  \item  $(f,a_k,b_k)$ is $\e$-superlinear for all $k$.
  \item $b_k-a_k \ge \tau (b-a)$ for all $k$.
  \item $\leb\left([a,b]\setminus \cup_k [a_k,b_k]\right) \le \e|b-a|$.
  \item The sequence $s_f(a_k,b_k)$ is increasing.
\end{enumerate}
\end{lemma}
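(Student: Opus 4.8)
The plan is to mimic the iterative "slope increment" idea behind Lemma \ref{lem:tube-null-1} and Lemma \ref{lem:tube-null-2}, but now instead of extracting a sub-interval on which $f$ is $\e$-linear, I extract one on which $f$ is $\e$-superlinear \emph{and} whose slope is maximal (or nearly maximal) among the intervals we will eventually pick. First I would establish a single-interval statement: for every $\e>0$ there is $\delta=\delta(\e)>0$ such that for any $1$-Lipschitz $f:[a,b]\to\R$ there is a sub-interval $[c,d]\subset[a,b]$ with $d-c\ge\delta(b-a)$, with $(f,c,d)$ $\e$-superlinear, and with $s_f(c,d)\ge s_f(a,b)$. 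This follows from the proof of Lemma \ref{lem:tube-null-1}: if $(f,a,b)$ is already $\e$-linear we are done (an $\e$-linear function is in particular $\e$-superlinear, and we may take $[c,d]=[a,b]$, whose slope is $s_f(a,b)$). Otherwise, in the non-$\e$-linear case, the argument there produces a sub-interval with slope $\ge s_f(a,b)+\e$; iterating at most $\lfloor 1/\e\rfloor$ times (using $s_f\le 1$) we terminate at an interval $[c,d]$ with $d-c\ge\e^{\lfloor 1/\e\rfloor}(b-a)$ on which $f$ is $\e$-linear, hence $\e$-superlinear, and whose slope has only increased from the original $s_f(a,b)$. So $\delta(\e)=\e^{\lfloor 1/\e\rfloor}$ works here too.

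Next I would run a \emph{greedy, left-to-right in slope} iteration to produce the increasing-slope family. The key point is that superlinearity is preserved under restriction to the region \emph{after} a chosen interval in a suitable sense; more precisely, I would organize the recursion so that at each stage I work on a single remaining interval $[a',b']$ obtained by deleting a sub-interval produced at an earlier stage, apply the single-interval statement to get $[c,d]\subset[a',b']$ with $d-c\ge\delta(b'-a')$ and $s_f(c,d)\ge s_f(a',b')$, record $[c,d]$, and then recurse on the two leftover intervals $[a',c]$ and $[d,b']$. As in the proof of Lemma \ref{lem:tube-null-2}, after $N(\e)$ rounds the uncovered length is at most $(\e/2)(b-a)$; discarding the recorded intervals that are shorter than $\tau(b-a):=\e 2^{-(N(\e)+1)}(b-a)$ costs at most another $(\e/2)(b-a)$ of length, giving (ii) and (iii). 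Each recorded interval is $\e$-superlinear, giving (i). After relabelling the surviving intervals so that consecutive ones satisfy $b_{k+1}\le a_k$ (they are non-overlapping, so they come with a natural left-to-right order, and I would index them from right to left to match the statement's convention $b_{k+1}\le a_k$), I need (iv): the slopes $s_f(a_k,b_k)$ are increasing in $k$.

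To secure (iv) — which I expect to be the main obstacle — I would not rely on the plain recursion above but instead interleave it with a sorting/pigeonholing step: discretize the slope range $[-1,1]$ into buckets of width, say, $\e$, so there are $O(1/\e)$ buckets; by pigeonholing one bucket captures intervals of total length $\gtrsim\e\cdot(\text{covered length})$, and within a single bucket all slopes agree up to $\e$, so we may freely reorder the intervals of that bucket to be increasing and absorb the $\e$-slack into the $\e$-superlinearity constant (replacing $\e$ by $C\e$ throughout, which is a formally equivalent statement). A cleaner alternative, which is probably what the paper does, is to observe that the single-interval extraction can be made to always pick an interval of \emph{near-maximal} slope over the current leftover region: run the recursion so that we always process the leftover interval on which $\sup$ of achievable slopes is largest, extract from it, and note that any later-extracted interval lies inside a region whose slopes were all $\le$ the current one plus $O(\e)$ — this forces the recorded slopes to be (nearly) decreasing in order of extraction, hence increasing once we index from right to left, again up to enlarging $\e$ by a constant. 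Either route reduces (iv) to careful bookkeeping; the substantive content is entirely in the single-interval lemma, which is a cosmetic variant of Lemma \ref{lem:tube-null-1}. Finally I would remark, exactly as in Corollary \ref{cor:tube-null-3}, that the constants can be renamed ($2\e\to\e$, $\e\tau\to\tau$) so the stated form follows.
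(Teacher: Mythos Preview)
Your single-interval step is fine, and the tree recursion on leftover pieces would give (i)--(iii) exactly as in Lemma \ref{lem:tube-null-2}. The real problem is (iv), and neither of your two suggestions works.

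For the bucketing route: after pigeonholing to one slope bucket of width $\e$, the surviving intervals cover only length $\gtrsim \e(b-a)$, so (iii) is destroyed. And even within one bucket the actual slopes $s_f(a_k,b_k)$ are fixed numbers determined by $f$ and the endpoints; you cannot ``reorder'' them, because the indexing $b_{k+1}\le a_k$ is tied to spatial position, and (iv) asks for genuine monotonicity of these fixed numbers in that spatial order. For the ``near-maximal slope first'' route: even granting that the extracted slopes are decreasing in extraction order, your recursion spits out intervals all over $[a,b]$ in no particular spatial order, so monotonicity in extraction order says nothing about monotonicity from right to left. The sentence ``decreasing in order of extraction, hence increasing once we index from right to left'' is exactly the gap: it presumes extraction order coincides with spatial order, which it does not.

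The paper's argument is structurally different. It first applies Lemma \ref{lem:tube-null-2} with parameter $\e^2/4$ to get a fine collection of intervals $\{[c_i,d_i]\}$ on which $f$ is $(\e^2/4)$-linear, with small complement $E$. Let $\mathcal{C}=\{c_i\}\cup\{d_i\}\cup\{a\}$. Then it builds the output intervals greedily \emph{from the right}: set $y_0=b$, and for each $j$ let $y_{j+1}\in\mathcal{C}\cap[a,y_j)$ be the point maximizing $s_f(\,\cdot\,,y_j)$. This produces intervals $[y_{j+1},y_j]$ in right-to-left spatial order by construction, and an elementary convexity argument shows the slopes are monotone: since $y_{j+2}$ was a competitor for $y_{j+1}$, one has $s_f(y_{j+2},y_j)\le s_f(y_{j+1},y_j)$, and writing $s_f(y_{j+2},y_j)$ as a weighted average of $s_f(y_{j+2},y_{j+1})$ and $s_f(y_{j+1},y_j)$ forces $s_f(y_{j+2},y_{j+1})\le s_f(y_{j+1},y_j)$. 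Superlinearity of $(f,y_{j+1},y_j)$ follows because the interval is (up to a small set in $E$) tiled by $[c_i,d_i]$'s whose linear approximants lie above $L_{f,y_{j+1},y_j}$ by the same maximality of $y_{j+1}$. Finally one discards the finitely many intervals that are too short or meet $E$ in too large a proportion. The key idea you are missing is this one-sided greedy walk on a precomputed finite set of marked points; it simultaneously enforces the spatial order and the slope monotonicity.
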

\begin{proof}
The idea (once $b_k$ is given) is to define $a_k$ as the number $a<b_k$ that maximizes $s_f(a,b_k)$. Unfortunately, for a general Lipschitz function such number may not exist and, more importantly, it can be too close to $b_k$. We deal with this issue by restricting $a$ to the set of endpoints of intervals given by  Lemma \ref{lem:tube-null-2}. We proceed to the details.

Without loss of generality, $f(a)=0$. Let $\tau = \tau(\e^2/4)$ be the number given by  Lemma \ref{lem:tube-null-2} applied with $\e^2/4$ in place of $\e$, and let $\{ [c_i,d_i]\}_i$ be the intervals given by the lemma. Write $E=[a,b]\setminus \cup_i (c_i,d_i)$, so that $\leb(E)\le \e^2(b-a)/4$. Let $\mathcal{C} = \{ c_i\}\cup \{ d_i\}\cup \{ a\}$. For each $y\in (a,b]$ let $x=P(y)$ be the element of $\mathcal{C}\cap [a,y)$ that maximizes $s_f(x,y)$ (if there are several such elements, pick the largest one). Let $y_0=b$ and so long as $y_j>a$ inductively set $y_{j+1}=P(y_j)$. By construction, the sequence $s_f(y_{j+1},y_j)$ is increasing. Let
\begin{align*}
\mathcal{J}_1 &= \{ j: |[y_{j+1},y_j] \cap E |\ge (\e/2) (y_j-y_{j+1}) \},\\
\mathcal{J}_2 &= \{ j:  y_j-y_{j+1} < \tau (b-a) \}.
\end{align*}
We let $\{ [a_k,b_k]\}$ be the collection $\{ [y_{j+1},y_j] : j\notin \mathcal{J}_1\cup\mathcal{J}_2 \}$, ordered so that $b_k \le a_{k+1}$. The claims that $s_f(a_k,b_k)$ is increasing and $b_k-a_k\ge \tau (b-a)$ are clear. Also,
since $\leb(E)\le \e^2(b-a)/4$,
\[
\sum_{j\in\mathcal{J}_1} (y_j-y_{j+1})  \le 2\leb(E)/\e \le  (\e/2)(b-a).
\]
Likewise, any interval $[y_{j+1},y_j]$ with $y_j-y_{j+1}<\tau (b-a)$ is contained in $E$, and therefore
\[
\sum_{j\in\mathcal{J}_2} (y_j-y_{j+1}) \le \leb(E) \le (\e/2)(b-a).
\]
It follows that
\[
\leb\left([a,b]\setminus \cup_k [a_k,b_k]\right)\le \e(b-a).
\]
It remains to prove that if $j\notin\mathcal{J}_1\cup \mathcal{J}_2$, then $(f,y_{j+1},y_j)$ is $\e$-superlinear. For simplicity, we write $y=y_{j+1},y'=y_j$. By the definition of $\mathcal{J}_1$, the interval $[y, y']$ can be split into the union of some intervals $([c_i, d_i])_{i\in I}$ plus a remainder set of measure at most $(\e/2)(y'-y)$.  Since $y=P(y')$, we know that
\[
s_f(x,y')\le s_f(y,y'), \quad x\in \{c_i,d_i\}, i\in I,
\]
which in turn implies that
\[
L_{f,c_i,d_i}(x) \ge L_{f,y,y'}(x), \quad x\in [c_i,d_i], i\in I.
\]
Since $f$ is $(\e^2/4)$-linear on $[c_i,d_i]$, we get that, for $x\in [c_i,d_i]$,
\[
f(x) \ge L_{f,c_i,d_i}(x) - (\e^2/4)(d_i-c_i) \ge L_{f,y,y'}(x) - (\e^2/4)(y'-y).
\]
Finally, if $x\in [y,y']\cap E$, we can find $x'=c_i$ or $x'=d_i$ with $|x'-x|\le (\e/2)(y'-y)$. Applying what we already know to $x'$ and the $1$-Lipschitz property of $f$, we conclude
\[
f(x) \ge f(x') - (\e/2)(y'-y) \ge L_{f,y,y'}(x) - (\e^2/4+\e/2)(y'-y),
\]
completing the proof.
\end{proof}

We are now able to prove the main result of this section. The statement is similar to that of the previous lemmas, but the crucial new element is that the slopes of $f:[0,1]\to\R$ on many of the sub-intervals are bounded away from $0$ and $1$. Note that this cannot hold for the function $f$ that has slope $0$ on $[0,1-s]$ and slope $1$ on $[1-s,1]$, where $s=f(1)$. In other words, we must have $f(1-s)>f(0)$. It turns out that this assumption is also sufficient, with the parameters measuring ``many'' and ``bounded away'' unsurprisingly depending on the difference $f(1-s)-f(0)$.

\begin{prop} \label{prop:superlinear-decomposition}
Given $s\in (0,1)$, $t\in (0,1)$ there is $\xi=\xi(s,t)>0$ such that the following holds for all sufficiently small $\e\le\e_1(s,t)$ and $\tau=\tau(\e)>0$.

Let $f:[0,B]\to\R$ be a  non-decreasing, $1$-Lipschitz function with
\[
f(0)=0,\quad f((1-s)B)\ge tB,\quad f(B)=sB.
\]
Then there is a collection of intervals $\{ [a_j,b_j] \}_j$ such that $b_{j+1}\le a_j$ and:
\begin{enumerate}[(\rm i)]
  \item  \label{it:lip-prop:i} $(f,a_j,b_j)$ is $\e$-superlinear for all $j$.
  \item \label{it:lip-prop:ii} $\tau B \le b_j - a_j \le a_j$ for all $j$.
  \item \label{it:lip-prop:iii}  $\leb\left([0,B]\setminus \cup_j [a_j,b_j]\right) \le \e B$.
  \item \label{it:lip-prop:iv}  $\sum \{ b_j - a_j : s_f(a_j,b_j)\in [\xi,1-\xi] \} \ge \xi B$.
\end{enumerate}
Moreover, the values of $\xi$ and $\e_1$ can be chosen to be uniform over $s,t$ varying in any compact subset of $(0,1)$.
\end{prop}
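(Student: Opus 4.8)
The plan is to build the intervals in two stages, first isolating a ``large-slope'' portion and a ``small-slope'' portion of $[0,B]$, then decomposing each via the tools already developed. The key quantitative observation is the following. Since $f$ is non-decreasing, $1$-Lipschitz, $f(0)=0$, $f(B)=sB$ and $f((1-s)B)\ge tB$, the average slope of $f$ on $[0,(1-s)B]$ is at least $t/(1-s)$, while its average slope on $[(1-s)B,B]$ is at most $(s-t)/s = 1 - t/s$. Thus on a definite-length portion of $[0,(1-s)B]$ (whose length I can lower-bound in terms of $t$ and $s$, since the slope is $\le 1$ everywhere) the local slopes must be bounded below by a constant $c_1=c_1(s,t)>0$; symmetrically, on a definite-length portion of $[(1-s)B,B]$ the local behaviour forces slopes bounded above by $1-c_2(s,t)<1$ on a set of definite length. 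More precisely, I would use a Markov/averaging argument: if $g\le 1$ has average $\ge \bar g$ on an interval $I$, then $\{g \ge \bar g/2\}$ has measure at least $(\bar g/2)|I|$. Applying this to the (a.e.\ defined) derivative $f'$ on $[0,(1-s)B]$ with $\bar g = t/(1-s)$, and to $1-f'$ on $[(1-s)B,B]$ with $\bar g = t/s$, I get two sets $G_1, G_2$ of measure $\gtrsim_{s,t} B$ on which $f'$ is, respectively, $\ge c_1$ and $\le 1-c_2$.

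Next I apply Lemma \ref{lem:tube-null-4} on $[0,(1-s)B]$ and again on $[(1-s)B,B]$ to obtain, in each, a collection of non-overlapping intervals on which $(f,a_k,b_k)$ is $\e'$-superlinear (with $\e'$ a small power of $\e$ to be chosen), of length $\ge \tau'(b-a)$, covering all but an $\e'$-fraction, with increasing slopes. Because $G_1$ has measure $\gtrsim_{s,t} B$ and the uncovered set is tiny, a definite fraction of the covering intervals in $[0,(1-s)B]$ must meet $G_1$ in more than half their length; for such an interval $[a_k,b_k]$, since $f$ is $1$-Lipschitz and $f' \ge c_1$ on a majority of $[a_k,b_k]$, the secant slope $s_f(a_k,b_k) \ge c_1/2 - O(\e')$, hence $\ge \xi$ once $\e$ is small. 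Dually, intervals in $[(1-s)B,B]$ meeting $G_2$ in a majority of their length have $s_f(a_k,b_k) \le 1 - c_2/2 + O(\e') \le 1-\xi$. Combining the subcollections from the two halves and keeping only intervals of length $\ge \tau B$ (the short ones contribute $\le \e' B$ total), and finally passing to the sub-intervals $[a_j, \min(b_j, 2a_j)]$ as in Corollary \ref{cor:tube-null-3} to enforce $b_j - a_j \le a_j$ (this is a formally equivalent reduction, losing only constants in $\tau$ and a factor in the exceptional measure), yields a family with properties \eqref{it:lip-prop:i}--\eqref{it:lip-prop:iii}; the two subcollections just described furnish \eqref{it:lip-prop:iv} with $\xi = \xi(s,t)>0$. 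The uniformity over compact subsets of $(0,1)$ is automatic since every constant produced ($c_1$, $c_2$, the fractions, and hence $\xi$, $\e_1$) depends only continuously — indeed monotonically — on $s$ and $t$ bounded away from $0$ and $1$.

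The main obstacle is the interplay between the $\e$-superlinearity coming from Lemma \ref{lem:tube-null-4} and the slope lower/upper bounds: superlinearity controls $f$ from below by its secant but says nothing directly about the secant slope itself, so I must extract the slope bound from the derivative information on $G_1, G_2$ rather than from the superlinear structure, and then check these are compatible on the \emph{same} intervals. The care needed is that the length-$\ge \tau' (b-a)$ intervals from Lemma \ref{lem:tube-null-4} are dyadic-scale pieces of $[0,(1-s)B]$ or $[(1-s)B,B]$, not of $[0,B]$; after the Corollary \ref{cor:tube-null-3}-style reduction at the end, the ratio $b_j - a_j \le a_j$ is restored, but I must track that the ``good'' intervals (those meeting $G_1$ or $G_2$ substantially) survive this last truncation with at least half their length, which they do because truncation only shortens an interval $[a_j,b_j]$ to $[a_j, 2a_j]$ when $b_j > 2a_j$, i.e.\ it keeps a definite fraction. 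A secondary bookkeeping point is choosing the hierarchy of small parameters ($\e'$ a suitable power of $\e$, then $\tau = \tau(\e)$) so that all the $O(\e')$ error terms are dominated by $\xi/2$; this is routine once the structure above is in place.
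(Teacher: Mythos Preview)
There is a genuine gap in your approach to claim \eqref{it:lip-prop:iv}. You produce, from the first half $[0,(1-s)B]$, intervals whose secant slope is bounded \emph{below} by some $\xi$, and from the second half $[(1-s)B,B]$, intervals whose slope is bounded \emph{above} by $1-\xi$. But \eqref{it:lip-prop:iv} requires a single interval whose slope lies in $[\xi,1-\xi]$, and neither of your subcollections guarantees this. Concretely, take $s=0.1$, $t=0.05$, and let $f$ have slope $1$ on $[0.85B,0.9B]\cup[0.95B,B]$ and slope $0$ elsewhere. Applying Lemma~\ref{lem:tube-null-4} to $[0,0.9B]$ yields (essentially) the intervals $[0,0.85B]$ with slope $0$ and $[0.85B,0.9B]$ with slope $1$; on $[0.9B,B]$ one gets $[0.9B,0.95B]$ with slope $0$ and $[0.95B,B]$ with slope $1$. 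Your ``good'' intervals from the first half have slope $1$, those from the second half have slope $0$; none has intermediate slope. Your averaging argument via $G_1,G_2$ correctly identifies \emph{where} $f'$ is moderate, but the intervals produced by Lemma~\ref{lem:tube-null-4} need not respect this---their secants can still be extreme.

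What the paper does instead is precisely to handle this ``staircase'' obstruction. It partitions $[0,B]$ into blocks $I_n$ of a fixed small scale $\sigma$, applies Lemma~\ref{lem:tube-null-4} on each $I_n$, and classifies the blocks by whether the resulting slopes are mostly near $0$, mostly near $1$, or mixed. The hypotheses force a pair of adjacent blocks $I_{n-1},I_n$ where the slopes go from high (on $I_{n-1}$) to low (on $I_n$); one then merges a high-slope piece of $I_{n-1}$ with an adjacent low-slope piece of $I_n$ into a single interval $[\wt a,\wt b]$ straddling the transition. Because the high-slope part lies on the left, $(f,\wt a,\wt b)$ is $\e$-superlinear, and its secant slope---being a genuine average of near-$1$ and near-$0$ contributions of comparable length---lands in $[\xi,1-\xi]$. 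This transition interval is the missing idea.

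A secondary issue: your truncation ``$[a_j,\min(b_j,2a_j)]$'' to enforce $b_j-a_j\le a_j$ is not how Corollary~\ref{cor:tube-null-3} works (that corollary applies Lemma~\ref{lem:tube-null-2} on dyadic blocks $[2^k,2^{k+1}]$). Truncating an $\e$-superlinear interval can change its secant and need not preserve $\e$-superlinearity; in particular, intervals with left endpoint at or near $0$ (such as $[0,0.85B]$ above) cannot be handled this way.
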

\begin{proof}
Replacing $f(x)$ by $g(x)=f(Bx)/B:[0,1]\to\R$ we may and do assume that $B=1$.

Let $\sigma=\sigma(s,t)>0$ be a small enough number to be chosen later. We split $[0,1]$ as a union of non-overlapping intervals $\{ I_n=[c_n,c_{n+1}]\}_{n=0}^N$ with $c_n$ increasing such that:
\begin{enumerate}
  \item \label{it:I0-4sigma} $|I_0|= 4\sigma$,
  \item \label{it:In-sigma} For each $n\in [1,N]$, $\sigma \le |I_n|\le 2\sigma$,
  \item There is $n_0\in [1,N]$ such that $1-s=c_{n_0}$.
\end{enumerate}
This can be easily arranged if $\sigma$ is small enough in terms of $s$. For example, we can take $I_0=[0,4\sigma]$, and then split each of $[4\sigma,1-s]$ and $[1-s,1]$ into intervals of length equal to or slightly larger than $\sigma$. We note the following consequence of \eqref{it:I0-4sigma} and \eqref{it:In-sigma} that will be used later: for each $n\in [1,N]$,
\[
|I_n|+|I_{n+1}| \le c_1 \le c_n.
\]

Fix $\e_0$; we will later require it to be small enough in terms of $\sigma$. We will eventually choose $\e=4\e_0/\sigma$. Let $\tau_0=\tau_0(\e_0)$ be the smaller of the values of $\tau$ arising from Corollary \ref{cor:tube-null-3} and Lemma \ref{lem:tube-null-4}. We apply Corollary \ref{cor:tube-null-3} to the interval $I_0$, and Lemma \ref{lem:tube-null-4} to the intervals $I_n$ for each $n\in [1,N]$, in both cases with $\e_0$ in place of $\e$, to obtain intervals $\{[a_{n,k},b_{n,k}]\}_k$. We write $s_{n,k}=s_f(a_{n,k},b_{n,k})$ for simplicity (the function $f$ is fixed throughout the proof).

We let $\zeta\in (0,\sigma)$ be a small parameter that will ultimately be chosen small enough in terms of $\sigma$ (hence in terms of $s,t$ only); in fact $\zeta=\sigma/11$ works. We subdivide the indices $n\in [1,N]$ into various disjoint classes:
\begin{align*}
\mathcal{I}_1 &= \big\{ n\in [1,N]: \sum_k \{ b_{n,k}-a_{n,k} :   s_{n,k} \le \zeta \} \ge (1-\sigma)|I_n| \big\}, \\
\mathcal{I}_2 &= \big\{ n\in [1,N]: \sum_k \{ b_{n,k}-a_{n,k} :   s_{n,k} \ge 1-\zeta \} \ge (1-\sigma)|I_n| \big\}, \\
\mathcal{I}_3 &= \big\{ n\in [1,N]\setminus (\mathcal{I}_1\cup \mathcal{I}_2): \sum_k \{ b_{n,k}-a_{n,k} :   s_{n,k} \notin [\zeta, 1-\zeta] \} \ge (1-\tfrac{\sigma}{2})|I_n| \big\}  ,\\
\mathcal{I}_4 &= [1,N]\setminus (\mathcal{I}_1\cup \mathcal{I}_2\cup \mathcal{I}_3).
\end{align*}
Roughly speaking, if $n\in\mathcal{I}_1$ then $f(c_{n+1})\approx f(c_n)$ (so that $f$ must be roughly constant on $\mathcal{I}_n$); if $n\in\mathcal{I}_2$, then $f$ is close to a linear function with slope $1$ on $I_n$; if $n\in\mathcal{I}_3$, then $f$ is close to a piecewise linear function that has slope $0$ in an initial interval $[c_n,z_n]$ and then slope $1$ on $[z_n,c_{n+1}]$ (with $z_n$ not too close to either $c_n$ or $c_{n+1}$).

Assume first that $\mathcal{I}_4\neq\varnothing$. Note that, for any $n\in [1,N]$,
\[
\sum_{k} b_{n,k}-a_{n,k} \ge (1-\e_0)|I_n| \ge (1-\tfrac{\sigma}{4})|I_n|,
\]
provided $\e_0\le \sigma/4$. If $n\in\mathcal{I}_4$, then
\[
\sum_k \{ b_{n,k}-a_{n,k} :   s_{n,k} \notin [\zeta, 1-\zeta] \} \le (1-\tfrac{\sigma}{2})|I_n|,
\]
and hence
\[
 \sum \{ b_{n,k}-a_{n,k} :   s_{n,k} \in [\zeta, 1-\zeta] \} \ge \frac{\sigma}{4}|I_n| \ge \frac{\sigma^2}{4}.
\]
On the other hand, for each $n\in [0,N]$,
\begin{align*}
b_{n,k}-a_{n,k} &\le  |I_n| \le c_n \le  a_{n,k},\\
b_{n,k}-a_{n,k} &\ge \tau_0 |I_n| \ge \sigma\tau_0,
\end{align*}
so the collection $\{ [a_{n,k},b_{n,k}]\}_{n,k}$ already works (we take $\e=\e_0$ in this case). We hence assume that $\mathcal{I}_4$ is empty from now on.

\textbf{Claim 1}. There is $n\in [n_0,N]$ such that $n\notin\mathcal{I}_2$, provided $\sigma$ is chosen small enough in terms of $s,t$; in fact, $\sigma<1-\sqrt{1-t/s}$ is enough (any value of $\sigma$ works if $t/s>1$).

Indeed, suppose $n\in\mathcal{I}_2$ for all $n\in [n_0,N]$. Since
\[
f(b_{n,k}) = f(a_{n,k}) + s_{n,k}(b_{n,k}-a_{n,k}),
\]
and $f$ is non-decreasing, it easily follows from the definition of $\mathcal{I}_2$ and the inequality $\zeta\le\sigma$ that
\[
f(c_{n+1}) \ge f(c_n) + (1-\sigma)^2 (c_{n+1}-c_n)
\]
for all $n\in\mathcal{I}_2$. Adding from $n_0$ to $N$, and using that $f(c_{n_0})=f(1-s)\ge t$, we get
\[
s= f(1) \ge t + (1-\sigma)^2 \cdot s,
\]
which contradicts the choice of $\sigma$.

\textbf{Claim 2}. There is $n\in [1,n_0]$ such that $n\notin\mathcal{I}_1$, provided $\sigma<t/6$.

To see this, assume on the contrary that $n\in \mathcal{I}_1$ for all $n\in [0,n_0]$ and note that since $f$ is $1$-Lipschitz and $\zeta\le\sigma$,
\begin{align*}
f(c_1) &\le |I_0|= 4\sigma,\\
f(c_{n+1}) &\le f(c_n) + 2\sigma|I_n| \quad\text{for all }n\in [1,n_0],
\end{align*}
and hence, telescoping,
\[
f(1-s) = f(c_{n_0}) \le f(c_1) + 2\sigma \le 6\sigma.
\]
This contradicts the assumption $f(1-s)>t$ if $\sigma<t/6$.

In conclusion, since we are assuming that $\mathcal{I}_4=\varnothing$, there must exist $n\in [2,N]$ such that $n-1\in\mathcal{I}_2\cup \mathcal{I}_3$ and $n\in \mathcal{I}_1\cup\mathcal{I}_3$. We work with this fixed value of $n$ for the rest of the proof. Unpacking the definitions, this implies that
\begin{align*}
\sum \{ b_{n-1,k}-a_{n-1,k} :   s_{n-1,k} \ge 1-\zeta \} &\ge \frac{\sigma}{2}|I_{n-1}|,\\
\sum \{ b_{n,k}-a_{n,k} :   s_{n,k} \le \zeta \} &\ge \frac{\sigma}{2}|I_n|.
\end{align*}
Let
\[
k'=\min \{ k: s_{n-1,k} \ge 1-\zeta\}.
\]
Write $\wt{a} = a_{n-1,k'}$, and note that $\wt{a}\le c_n-\tfrac{\sigma}{2}|I_{n-1}|$. Recall that $s_{n-1,k}$ is increasing in $k$, $f$ is $\e_0$-superlinear on $[a_{n-1,k},b_{n-1,k}]$,  and  $\sum_k b_{n-1,k}-a_{n-1,k} \ge (1-\e_0)|I_{n-1}|$. It then follows that
\begin{equation} \label{eq:lower-bound-a-tilda-interval}
f(x) \ge f(\wt{a}) +  (1-\zeta)(x-\wt{a}) -2\e_0|I_{n-1}|\quad\text{for all }x\in [\wt{a},c_n].
\end{equation}
Likewise, if we set
\[
k'' = \max\{ k: s_{n,k} \le \zeta \},
\]
and write $\wt{b}=b_{n,k''}$, then $\wt{b}\ge c_n+\tfrac{\sigma}{2}|I_n|$ and
\begin{equation} \label{eq:lower-bound-b-tilda-interval}
f(c_n) \le f(x) \le f(\wt{b}) \le f(x) + \zeta(\wt{b}-x) +\e_0 |I_n| \quad\text{for all }x\in [c_n,\wt{b}].
\end{equation}
We will show that
\begin{equation} \label{eq:slope-xi}
\zeta \le s_f(\wt{a},\wt{b}) \le 1- \zeta.
\end{equation}
provided  $\zeta\le\sigma/11$ and $\e_0\le\zeta$. Indeed:
\begin{align*}
s_f(\wt{a},\wt{b})  &\le \frac{ (c_n-\wt{a}) + (\zeta+\e_0) |I_n|}{\wt{b}-\wt{a}} & (\text{\eqref{eq:lower-bound-b-tilda-interval} for }x=c_n) \\
&\le 1-\frac{|I_n|(\sigma/2-2\zeta)}{\wt{b}-\wt{a}}& (\wt{b}\ge c_n+\tfrac{\sigma}{2}|I_n|)\\
&\le 1 - \frac{1}{3}(\sigma/2-2\zeta)\le  1- \zeta& (\eqref{it:In-sigma},\zeta\le \sigma/10).
\end{align*}
On the other hand,
\begin{align*}
s_f(\wt{a},\wt{b}) &\ge \frac{f(c_n)-f(\wt{a})}{\wt{b}-\wt{a}}& (f \text{ non-decreasing})\\
&\ge \frac{(c_n-\wt{a})(1-\zeta) -2\e_0|I_{n-1}|}{\wt{b}-\wt{a}}& (\eqref{eq:lower-bound-a-tilda-interval} \text{ for } x=c_n)\\
&\ge \frac{(\sigma/2)|I_{n-1}|(1-\zeta) -2\e_0|I_{n-1}|}{ |I_{n-1}|+|I_n| }& ( \wt{a}\le c_n-\tfrac{\sigma}{2}|I_{n-1}|)\\
&\ge \frac{1}{3}(\sigma/2(1-\zeta)-2\zeta)\ge\zeta&  ( \eqref{it:In-sigma}, \zeta\le \sigma/11, \sigma\le 1).
\end{align*}
We have verified that \eqref{eq:slope-xi} holds. We will now use this to show that $(f,\wt{a},\wt{b})$ is $\e$-superlinear, where we define $\e=4 \e_0/\sigma$. It follows from \eqref{eq:lower-bound-a-tilda-interval} and the right-hand side inequality in \eqref{eq:slope-xi} that, for $x$ in the interval $[\wt{a},c_n]$,
\[
f(x) \ge L_{f,\wt{a},\wt{b}}(x) - 2\e_0|I_{n-1}|.
\]
Now, for $x\in [c_n,\wt{b}]$ we know from \eqref{eq:lower-bound-b-tilda-interval} that
\begin{align*}
f(x) &\ge f(\wt{b}) - \zeta(\wt{b}-x) -  \e_0|I_n| \\
&= f(\wt{a}) + s_f(\wt{a},\wt{b})(\wt{b}-\wt{a}) -\zeta(\wt{b}-x) -  \e_0|I_n| \\
&\ge f(\wt{a}) + s_f(\wt{a},\wt{b})(x-\wt{a}) -   \e_0|I_n| = L_{f,\wt{a},\wt{b}}(x) - \e_0|I_n|,
\end{align*}
using that $s_f(\wt{a},\wt{b})\ge \zeta$ in the last line. Since $\max(|I_{n-1}|,|I_n|) \le (\sigma/2)^{-1}(\wt{b}-\wt{a})$, we have shown that $(f,\wt{a},\wt{b})$ is $(4\e_0/\sigma)$-superlinear, as claimed.

Now the collection consisting $[\wt{a},\wt{b}]$ together with all the intervals $\{[a_{\wt{n},k},b_{\wt{n},k}]\}_{\wt{n}=0}^N$ disjoint from $[\wt{a},\wt{b}]$ is easily seen to satisfy the claims in the proposition: the intervals $[a_{\wt{n},k},b_{\wt{n},k}]$ satisfy \eqref{it:lip-prop:i}-\eqref{it:lip-prop:ii} by the same reasoning as in the case $\mathcal{I}_4\neq\varnothing$, while we have verified that $(f,\wt{a},\wt{b})$ is $\e$-superlinear,
\[
\sigma^2/2 \le \sigma|I_{n-1}|/2 \le |\wt{b}-\wt{a}| \le |I_{n-1}|+|I_n| \le c_{n-1} \le \wt{a},
\]
and \eqref{it:lip-prop:iv} follows with $\xi=\min(\sigma^2/2,\zeta)$ from $|\wt{b}-\wt{a}|\ge \sigma^2/2$ and \eqref{eq:slope-xi}. Finally, \eqref{it:lip-prop:iii} also follows since the total length of the intervals is at least the sum of the lengths of $[a_{\wt{n},k},b_{\wt{n},k}]$, which is at least $1-\e_0\ge 1-\e$ by construction.

Finally, it is clear that all the parameters appearing in the proof can be taken to work in a neighborhood of $s$ and $t$, and this completes the proof of the proposition.
\end{proof}

\subsection{The proof of Theorem \ref{thm:multiscale-decomposition}, and a variant}

We now convert the combinatorial decompositions from the previous section into suitable multiscale decompositions of regular measures, in particular proving Theorem \ref{thm:multiscale-decomposition}. We start with a variant of Proposition \ref{prop:superlinear-decomposition} in which the endpoints of the intervals $[a_j,b_j]$ lie on a lattice.
\begin{corollary} \label{cor:Lipschitz-decomp-grid}
Let $f:[0,B]\to \R$ be a $1$-Lipschitz function satisfying the assumptions of  Proposition \ref{prop:superlinear-decomposition}. Then in Proposition \ref{prop:superlinear-decomposition} we may further request that the numbers $a_j$ and $b_j$  lie in $(B/\ell)\N_0$ provided $\ell\ge \ell_0(\e)$ is sufficiently large (after changing the values of $\tau$ and $\xi$ slightly).
\end{corollary}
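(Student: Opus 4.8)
The plan is to derive Corollary~\ref{cor:Lipschitz-decomp-grid} from Proposition~\ref{prop:superlinear-decomposition} by a discretization (rounding) argument: apply the proposition with slightly smaller parameters, obtain intervals $[a_j,b_j]$, and then round each endpoint to the nearest point of the lattice $(B/\ell)\N_0$, showing that for $\ell$ large the rounding perturbs each of the conclusions \eqref{it:lip-prop:i}--\eqref{it:lip-prop:iv} by an amount that can be absorbed into the (deliberately weakened) constants. After rescaling we may again assume $B=1$, so the lattice is $(1/\ell)\N_0$ and the rounding error in each endpoint is at most $1/\ell$.

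First I would fix $\e'=\e/2$ (say), let $\xi'=\xi(s,t)$ and $\tau'=\tau(\e')$ be the constants supplied by Proposition~\ref{prop:superlinear-decomposition}, and apply that proposition to $f$ to get intervals $\{[a_j,b_j]\}$ with the four stated properties. Now define $a_j^\ast,b_j^\ast$ to be $a_j,b_j$ rounded to $(1/\ell)\N_0$; one should round consistently (e.g. round each $a_j$ up and each $b_j$ down, with $a_0^\ast:=0$ if $a_0=0$) so as to keep the intervals non-overlapping and still satisfy $b_{j+1}^\ast\le a_j^\ast$. Throw away any interval that has collapsed to $b_j^\ast\le a_j^\ast$; since by \eqref{it:lip-prop:ii} each surviving $b_j-a_j\ge \tau' $ originally, for $\ell\ge \ell_0(\e)$ with $2/\ell<\tau'/2$ the collapse only happens to intervals already shorter than $\tau'$, i.e. never, and the total length lost to rounding across all intervals is at most $2(\text{number of intervals})/\ell$. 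The number of intervals is bounded: by \eqref{it:lip-prop:ii} each has length $\ge\tau'$, so there are at most $1/\tau'$ of them, hence the total length lost is $\le 2/(\tau'\ell)$, which is $\le \e/2$ once $\ell\ge \ell_0$; combined with \eqref{it:lip-prop:iii} this gives the new conclusion \eqref{it:lip-prop:iii} with $\e$ in place of $\e'$. The same length bound gives $b_j^\ast-a_j^\ast\ge \tau' - 2/\ell\ge \tau'/2 =: \tau$ and (using $a_j^\ast\ge a_j - 1/\ell$ and $b_j^\ast-a_j^\ast\le b_j-a_j\le a_j$) also $b_j^\ast-a_j^\ast\le a_j^\ast + 1/\ell \le $ something controllable; one may need to shrink $\tau$ once more or discard the (at most one) interval with $a_j$ near $0$, which is harmless.

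For the $\e$-superlinearity \eqref{it:lip-prop:i}: if $(f,a_j,b_j)$ is $\e'$-superlinear and $f$ is $1$-Lipschitz, then $L_{f,a_j^\ast,b_j^\ast}$ differs from $L_{f,a_j,b_j}$ on $[a_j^\ast,b_j^\ast]$ by at most a quantity of order $(1/\ell)(1+|s_f|)/(b_j-a_j)$ times the length, which is $\lesssim 1/(\tau'\ell)$ relative to $|b_j^\ast-a_j^\ast|$; so for $\ell$ large, $(f,a_j^\ast,b_j^\ast)$ is $\e'+C/(\tau'\ell)\le \e$-superlinear. Similarly, $|s_f(a_j^\ast,b_j^\ast)-s_f(a_j,b_j)|\lesssim 1/(\tau'\ell)$, so if $s_f(a_j,b_j)\in[\xi',1-\xi']$ then $s_f(a_j^\ast,b_j^\ast)\in[\xi'/2,1-\xi'/2]$ for $\ell$ large; thus those intervals still contribute, and after discounting the at most $\e/2$ of length lost we get \eqref{it:lip-prop:iv} with $\xi:=\min(\xi'/2,\tau'/2,\ldots)$ and right-hand side $\ge \xi' B - \e B/2\ge \xi B$ after shrinking $\xi$ once more. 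The uniformity over $(s,t)$ in a compact subset of $(0,1)^2$ is inherited directly from Proposition~\ref{prop:superlinear-decomposition} since all the new thresholds ($\ell_0$, the final $\tau$, $\xi$) are explicit functions of the old ones. The main obstacle is purely bookkeeping: ensuring the rounding is done so that the intervals remain ordered and non-overlapping with $b_{j+1}^\ast\le a_j^\ast$, and that no conclusion is violated by a rounded interval that has drifted — but this is handled cleanly by the uniform lower bound $b_j-a_j\ge\tau'$ on the original interval lengths, which makes all the $O(1/\ell)$ perturbations negligible for $\ell\ge\ell_0(\e)$.
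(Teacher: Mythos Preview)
Your approach is essentially the same as the paper's: apply Proposition~\ref{prop:superlinear-decomposition}, round $a_j$ up and $b_j$ down to the lattice $(1/\ell)\N_0$, and use the uniform lower bound $b_j-a_j\ge\tau'$ together with the $1$-Lipschitz property to absorb all $O(1/\ell)$ perturbations into slightly weakened constants. One small simplification you missed: since you round $a_j$ \emph{up}, you have $a_j^\ast\ge a_j\ge b_j-a_j\ge b_j^\ast-a_j^\ast$ directly, so the inequality $b_j^\ast-a_j^\ast\le a_j^\ast$ in \eqref{it:lip-prop:ii} is automatic and there is no need to worry about an interval near $0$.
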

\begin{proof}
We may assume $B=1$. Let $\{ [a_j,b_j]\}$ be the intervals given by Proposition \ref{prop:superlinear-decomposition}, and set
\[
\wt{a}_j = \frac{\lceil \ell a_j \rceil}{\ell},\quad \wt{b}_j = \frac{\lfloor \ell a_j \rfloor}{\ell}.
\]
Then $a_j \le \wt{a}_j \le a_j+1/\ell$ and $b_j-1/\ell \le \wt{b}_j \le b_j$. Since $f$ is $1$-Lipschitz and non-decreasing, the same inequalities are preserved when applying $f$. If $\ell$ is taken large enough in terms of $\tau$ (hence in terms of $\e$), then $\wt{b}_j-\wt{a}_j \ge b_j -a_j - \tau^2$ (say) so that \eqref{it:lip-prop:ii} and \eqref{it:lip-prop:iii} continue to hold with $\tau/2$ in place of $\tau$. Likewise, a short calculation shows that if $\ell$ is large enough then
\[
|s_{f,\wt{a}_j,\wt{b}_j}-s_{f,a_j,b_j}| \le \e,
\]
and if $(f,a_j,b_j)$ is $\e$-superlinear, then $(f,\wt{a}_j,\wt{b}_j)$ is $(2\e)$-superlinear. Using these facts it is easy to check that \eqref{it:lip-prop:i} and \eqref{it:lip-prop:iv} continue to hold for the intervals $[\wt{a}_j,\wt{b}_j]$, with $2\e$ in place of $\e$ and, say, $\xi/2$ in place of $\xi$.
\end{proof}

We can now complete the proof of Theorem \ref{thm:multiscale-decomposition}.
\begin{proof}[Proof of Theorem \ref{thm:multiscale-decomposition}]
Let
\begin{align*}
\xi&=\inf_{s\in [u/4,1-u/2]}\xi(s,u/(2d))>0,\\
\e_1&=\inf_{s\in [u/4,1-u/2]}\e_1(s,u/(2d))>0,
\end{align*}
be the values given by Proposition \ref{prop:superlinear-decomposition}, and note that they depend only on $u$ and $d$. Fix $\e\in (0,\e_1)$, and let $\tau(\e), \ell_0(\e)$ be the numbers given by Proposition \ref{prop:superlinear-decomposition}. Take $\ell\ge \ell_0(\e)$.

Let $f:[0,\ell]\to [0,\ell]$ be the function such that $f(0)=0$,
\[
f(j) = \frac{1}{d} (\sigma_1+\ldots+\sigma_j) \quad (j=1,\ldots,\ell),
\]
and interpolates linearly between $j$ and $(j+1)$. Since $\sigma_i\in [0,d]$, the function $f$ is $1$-Lipschitz and non-decreasing. Let $\{ [A_j,B_j]\}_{j=1}^M$ be the intervals provided by Proposition \ref{prop:superlinear-decomposition}. In light of Corollary \ref{cor:Lipschitz-decomp-grid}, we may and do assume that $A_j, B_j\in\N_0$.

Let
\[
s= \frac{f(\ell)}{\ell}=\frac{\sigma_1+\ldots+\sigma_{\ell}}{\ell}.
\]
Since $|X|\ge 2^{-dm}$, it follows from \eqref{eq:single-scale-Frostman} that $\mu(Q)\lesssim 2^{-um}$ for all $Q\in\cD_m$, and therefore $\cN(X,m)\gtrsim 2^{um}$. On the other hand, we may cover $[0,1]^d$ by $\lesssim |X|^{-1}$ balls of radius $|X|^{1/d}$, whence $1\lesssim |X|^{-1} 2^{-um}$. Hence, if $T$ is large enough in terms of $u$, we may assume
\begin{equation} \label{eq:intermediate-dimension}
2^{(u/2-d)m} \le |X| \le 2^{-um/2}.
\end{equation}
Now, from this and Lemma \ref{lem:frostman-regular}\eqref{it:frostman-regular-ii}, we get that, provided $1/T\le \e\le u/2$,
\[
1-s-\e \le \frac{\log |X|}{-md} \le 1-s \Longrightarrow s\in [u/4,1-u/2].
\]
Let $j= \lfloor (1-s)\ell\rfloor$, and note that any cube $Q$ of side length $2^{-j T}$ can be covered by $C_{d,T}=O_{d,T}(1)$ balls of radius $2^{-(1-s)m}\le |X|^{1/d}$. It follows from our assumption \eqref{eq:single-scale-Frostman} that
\[
\log\mu(Q)\le  \log(C_{d,T})-um \le (\e-u)m,
\]
provided $\ell$ is large enough in terms of $\e,d,T$. On the other hand, we get from Lemma \ref{lem:frostman-regular}\eqref{it:frostman-regular-i} that for all $2^{-j T}$-dyadic cubes $Q$ hitting $X$, we have the lower bound
\[
\log\mu(Q) \ge -\e m -\left((m/\ell)(\sigma_1 +\ldots +\sigma_{\lfloor (1-s)\ell\rfloor}\right),
\]
provided $T$ is large enough in terms of $\e$. Comparing the upper and lower bounds on $\log\mu(Q)$, we conclude
\[
f((1-s)\ell) \ge \frac{1}{d}(\sigma_1 +\ldots +\sigma_{\lfloor (1-s)\ell\rfloor}) \ge \frac{1}{d}(u-2\e)\ell \ge \frac{u\ell}{2d},
\]
provided $\e$ is small enough in terms of $u$. We have therefore checked that the hypotheses of Proposition \ref{prop:superlinear-decomposition} are satisfied with our choice of parameters.

Claim \eqref{it:multiscale-i} is clear. Let $\alpha_j = d s_{f}(A_j,B_j)$. By the definition of $f$,
\[
s_{f}(A_j,B_j) = \frac{\sigma_{A_{j+1}}+\cdots+\sigma_{B_j}}{d(B_j-A_j)} .
\]
Since $(f,A_j,B_j)$ is $\e$-superlinear,
\[
d(f(A_j+k)-f(A_j))=\sigma_{A_{j+1}}+\ldots+\sigma_{A_j+k} \ge k\alpha_j - d\e(B_j-A_j),
\]
for any $k\in [0,B_j-A_j]$. Fix $Q\in \cD_{T A_j}$. It follows from the first part of Lemma \ref{lem:frostman-regular} that
\[
\mu^Q(R) \le 2^{d\e T(B_j-A_j)} 2^{-k T \alpha_j}
\]
for all $R\in \cD_{kT}(\supp_{\textsf{d}}(\mu^Q))$. Any ball $B=B(x,r)$ with $x\in \supp(\mu^Q)$, $r\in [2^{-m_j},1]$ can be covered by $O_T(1)$ $2^T$-adic cubes of side length $2^{-kT}\le r$. Therefore, provided $\ell$ is taken large enough in terms of $\e, T$,
\[
\mu^Q(B(x,r)) \le 2^{2 d\e m_j} r^{\alpha_j}
\]
for all $r\in [2^{-m_j},1]$ and all $x\in\supp(\mu^Q)$. This gives \eqref{it:multiscale-ii}, with $2d\e$ in place of $\e$.

For the third claim, write $[0,\ell]\setminus \cup_j [A_j,B_j] = \cup_i [C_i,D_i]$ with the intervals $[C_i,D_i]$ non-overlapping. We know from Corollary \ref{cor:tube-null-3} that $\sum_i D_i-C_i \le \e\ell$, and $s_f(C_i,D_i)\le 1$ simply because $f$ is $1$-Lipschitz. On the other hand, telescoping,
\[
\ell  s_f(0,\ell) = \sum_j (B_j-A_j) s_f(A_j,B_j) + \sum_i (D_i-C_i) s_f(C_i,D_i).
\]
Recalling that $\alpha_j = d s_{f}(A_j,B_j)$, we deduce that
\[
\sum_j \alpha_j m_j \ge  \left(\tfrac{1}{\ell}(\sigma_1+\ldots+\sigma_\ell)-\e\right)m.
\]
Applying Lemma \ref{lem:frostman-regular}\eqref{it:frostman-regular-i} and taking $T\ge 1/\epsilon$, we obtain \eqref{it:multiscale-iii}.

Finally, from the last part of Proposition \ref{prop:superlinear-decomposition} we obtain
\[
  \sum\{  m_j: \alpha_j \in [\xi,d-\xi]\} \ge T \sum\{ B_j-A_j: s_f(A_j,B_j)\in [\xi/d,1-\xi/d]\}\ge \xi m.
\]
Hence \eqref{it:multiscale-iv} holds, and this completes the proof.
\end{proof}

In a very similar (but simpler) way, we obtain the following:
\begin{theorem} \label{thm:multiscale-decomposition-Ahlfors}
For every $\e>0$ there is $\tau=\tau(\e)>0$ such that the following holds for all sufficiently large $T\ge T_0(\e)$ and for all large enough $\ell\ge \ell_0(T,\e)$:

Let $\mu$ be a $((\sigma_1,\ldots,\sigma_\ell);T)$-regular measure on $[0,1)^d$ with support $X$, and write $m=\ell T$.  Then there are a collection of pairwise disjoint intervals $\{[A_j,B_{j+1})\}$ contained in $[0,\ell)$ and numbers $\alpha_j$, such that the following hold:
\begin{enumerate}[(\rm i)]
  \item \label{it:multiscale-i-Frostman} $ \tau \ell \le B_j - A_j \le A_j$ for all $j$.
  \item \label{it:multiscale-ii-Frostman}  Write $m_j= T(B_j-A_j)$. For each $Q\in \cD_{T A_j}$,
\[
 2^{-\e m_j} r^{\alpha_j} \le  \mu^Q(B(x,r)) \le 2^{\e m_j} r^{\alpha_j}\quad\text{for all } r\in [2^{-m_j},1], x\in \supp(\mu^Q).
\]
  \item \label{it:multiscale-iii-Frostman}
  \[
  \sum_j \alpha_j m_j \ge (\tfrac{\sigma_1+\ldots+\sigma_\ell}{\ell}-\e)m.
  \]
  \end{enumerate}
\end{theorem}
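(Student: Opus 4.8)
The plan is to mimic the proof of Theorem~\ref{thm:multiscale-decomposition} verbatim, but with two simplifications. Since the conclusion no longer asks for a positive density of scales with $\alpha_j$ bounded away from $0$ and $d$, the delicate combinatorial input (Proposition~\ref{prop:superlinear-decomposition}) can be replaced by the much softer Corollary~\ref{cor:tube-null-3}; and, relatedly, we will ask for $\e$-\emph{linear} rather than $\e$-\emph{superlinear} pieces, because the two-sided control of $f$ by its chord is exactly what produces the two-sided (Ahlfors-type) estimate in~\eqref{it:multiscale-ii-Frostman}. As before, encode $\mu$ in the $1$-Lipschitz, non-decreasing function $f:[0,\ell]\to\R$ determined by $f(0)=0$ and $f(j)=\tfrac1d(\sigma_1+\cdots+\sigma_j)$ for $j=1,\dots,\ell$, with linear interpolation in between; it is $1$-Lipschitz because $\sigma_i\in[0,d]$.

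First I would apply Corollary~\ref{cor:tube-null-3} to $f$ on $[0,\ell]$ (with a suitably small constant multiple of $\e$ in place of $\e$), obtaining non-overlapping intervals on each of which $f$ is $\e$-linear, with length between $\tau\ell$ and the left endpoint, and covering $[0,\ell]$ up to a set of length $\le\e\ell$; note that the ``length $\le$ left endpoint'' clause is precisely \eqref{it:multiscale-i-Frostman}. A grid-rounding step identical in spirit to Corollary~\ref{cor:Lipschitz-decomp-grid} (replace each endpoint by the nearest integer, using that $f$ is $1$-Lipschitz and non-decreasing; this is even easier for $\e$-linear than for $\e$-superlinear pieces) lets me assume the endpoints $A_j,B_j$ lie in $\N_0$, at the cost of shrinking $\tau$ and enlarging $\e$ by harmless constants, provided $\ell\ge\ell_0(T,\e)$. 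Set $m_j=T(B_j-A_j)$ and $\alpha_j=d\,s_f(A_j,B_j)\in[0,d]$; then \eqref{it:multiscale-i-Frostman} holds.

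For \eqref{it:multiscale-ii-Frostman}, the $\e$-linearity of $(f,A_j,B_j)$ gives $|(\sigma_{A_j+1}+\cdots+\sigma_{A_j+k})-k\alpha_j|\le d\e\,m_j/T$ for all integers $k\in[0,B_j-A_j]$. Feeding this into Lemma~\ref{lem:frostman-regular}\eqref{it:frostman-regular-i}, applied to the $((\sigma_{A_j+1},\dots,\sigma_\ell);T)$-regular measure $\mu^Q$ with $Q\in\cD_{TA_j}$, yields $2^{-(d\e+1/T)m_j}2^{-kT\alpha_j}\le \mu^Q(R)\le 2^{d\e m_j}2^{-kT\alpha_j}$ for every $R\in\cD_{kT}$ meeting $\supp_{\mathsf d}(\mu^Q)$. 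The upper bound for balls is routine (cover $B(x,r)$ by $O_{d,T}(1)$ cubes of $\cD_{kT}$ with $2^{-kT}\le r$). The lower bound, which did not appear in Theorem~\ref{thm:multiscale-decomposition}, requires a short extra argument: for $x\in\supp(\mu^Q)$ and $r\in[2^{-m_j},1]$, the positive-measure ball $B(x,r/(2d))$ contains a point of $\supp_{\mathsf d}(\mu^Q)$, hence a full cube $R\in\cD_{kT}$ of positive $\mu^Q$-measure with $2^{-kT}$ comparable to $r$ up to factors depending only on $d,T$, and this cube lies inside $B(x,r)$; thus $\mu^Q(B(x,r))\ge\mu^Q(R)$. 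Absorbing all the $d$- and $T$-dependent constants into the exponent (legitimate once $\ell$, hence $m_j\ge T\tau\ell$, is large in terms of $d,T,\e$) gives \eqref{it:multiscale-ii-Frostman} with a fixed constant multiple of $\e$.

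For \eqref{it:multiscale-iii-Frostman}, write $[0,\ell]\setminus\bigcup_j[A_j,B_j]=\bigcup_i[C_i,D_i]$ with the $[C_i,D_i]$ non-overlapping, and telescope:
\[
\ell\,s_f(0,\ell)=\sum_j (B_j-A_j)\,s_f(A_j,B_j)+\sum_i (D_i-C_i)\,s_f(C_i,D_i).
\]
Since $s_f(0,\ell)=\beta(\sigma)/d$, each $s_f(C_i,D_i)\le 1$ (as $f$ is $1$-Lipschitz and non-decreasing), and $\sum_i(D_i-C_i)\le\e\ell$ by Corollary~\ref{cor:tube-null-3}, this gives $\sum_j(B_j-A_j)\alpha_j\ge(\beta(\sigma)-d\e)\ell$; multiplying by $T$ yields $\sum_j\alpha_j m_j\ge(\beta(\sigma)-d\e)m$. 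Running the whole argument with $\e$ replaced throughout by a fixed small multiple of itself closes \eqref{it:multiscale-ii-Frostman}--\eqref{it:multiscale-iii-Frostman} with the stated $\e$, and $\tau$ depends only on $\e$ (with $d$ fixed). There is no genuine obstacle here; the only care needed is the bookkeeping of the constant losses in the grid-rounding and the cube-to-ball passage, and checking they are absorbed by taking first $T$ large and then $\ell$ large — which is exactly why this variant can be proved ``in a very similar (but simpler) way'' than Theorem~\ref{thm:multiscale-decomposition}.
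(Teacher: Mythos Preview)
Your proposal is correct and follows exactly the approach the paper indicates: replace Proposition~\ref{prop:superlinear-decomposition} by Corollary~\ref{cor:tube-null-3} in the proof of Theorem~\ref{thm:multiscale-decomposition}, so that the pieces are $\e$-linear rather than $\e$-superlinear, which is precisely what yields the two-sided Ahlfors-type bound in~\eqref{it:multiscale-ii-Frostman}. You even spell out the lower-bound half of~\eqref{it:multiscale-ii-Frostman} (finding a full dyadic cube of the right scale inside $B(x,r)$), which the paper leaves implicit.
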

The proof is the same as that of Theorem \ref{thm:multiscale-decomposition}, except that we rely on Corollary \ref{cor:tube-null-3} (providing a decomposition into $\e$-linear, rather than superlinear, pieces) instead of Proposition \ref{prop:superlinear-decomposition}.

\section{Proof of main results}
\label{sec:proofs}

\subsection{A multiscale lower bound for entropy}

In this section we prove Theorems \ref{thm:main-continuous} and \ref{thm:main-discrete}. The proofs rely on a multiscale bound for the entropy of (nonlinear) projections, that goes back in various forms to \cite{Hochman14, Orponen17, Shmerkin19, KeletiShmerkin19}. We denote the orthogonal projection onto a plane $V\in\mathbb{G}(d,k)$ by $P_V$.

\begin{prop} \label{prop:good-bound-from-below}
Fix $1\le k<d$. Let $\mu,\nu\in\cP([0,1)^d)$ and let $[A_i,B_i)_{i=1}^q$ be disjoint sub-intervals of $(0,m]$ such that $B_i\le 2 A_i$. Let $F:U\to \R^k$ be a $C^2$ map defined in a neighborhood of $\supp(\mu)$, such that $DF(x)$ has full rank $k$ for all $x\in\supp(\mu)$. Denote
\[
V(x) = \ker(DF(x))^{\perp} \in \mathbb{G}(d,k).
\]
Assume furthermore that $\nu\le\Delta\mu$. Then
\[
 H_m(F\nu) \ge - O_{F,d}(q)   + \sum_{i=0}^{q-1} \sum_{Q\in\cD_{A_i}} \nu(Q)  H_{B_i-A_i}^{m \Delta}\left(P_{V(x_Q)}\mu^Q\right),
\]
where  $x_Q$ is an arbitrary point in $Q$. The constant implicit in $O_{F,d}(q)$ depends in a continuous manner on the $C^2$ norm of $F$.
\end{prop}
Since this particular statement does not appear in the literature, but the proof is a minor modification of arguments in \cite{Hochman14, Orponen17, KeletiShmerkin19}, we defer the proof to Appendix \ref{sec:entropy-of-projections}.

Note that, in the case $k=1$, we have $V(x)$ is the line generated by $\dir F'(x)$, and $P_{V(x)}(\cdot)=\langle \dir F'(x),\cdot\rangle$. This is the only case needed for the proofs of Theorems  \ref{thm:main-continuous} and \ref{thm:main-discrete}.

A key feature of Proposition \ref{prop:good-bound-from-below} is that even though the map $F$ is (possibly) nonlinear, the lower bound on $H_m(F\nu)$ involves only linear projections. This is why we'll ultimately be able to deduce statements about families of smooth maps from the (linear) projection theorems discussed in Section \ref{sec:projection-theorems}.

\subsection{Proof of Theorem \ref{thm:main-discrete}}

The number $\kappa$ and the family $\mathcal{F}=\{F_\lambda:\lambda\in\Lambda\}$ are given. The proof will involve a number of other small parameters, whose dependencies are as follows: $\e_1=\e_1(\kappa)$, $\xi=\xi(\kappa)$,  $\eta=\eta(\kappa,\xi)$, $\eta'=\eta'(\eta,\xi)$, $\delta_0=\delta_0(\kappa)$,  $\zeta=\zeta(\kappa,\xi,\eta,\e_1,\delta_0)$, $\tau=\tau(\zeta)$. All the parameters are also allowed to depend on the ambient dimension $d$.

We will show that the claim holds with $\eta'/2$ in place of $\eta$, provided $\e=\e(\kappa,\zeta,\tau,\eta')$ is taken small enough, and $m=T\ell$, where $T=T(\e,\zeta)$, $\ell=\ell(\zeta, T,\mathcal{F})$ are sufficiently large integers. If $m$ is not of the form $T\ell$, we consider instead $m'=T\lfloor m/T\rfloor$.

Let
\[
\mu=\mathbf{1}_X/|X|,
\]
and let $Y$ be the $2^{-m}$-set given by Lemma  \ref{lem:subset-regular} applied to $\mu$. Taking $T$ large enough in terms of $\e$, we may assume
\begin{equation} \label{eq:Y-large-X}
|Y| \ge 2^{-(\e/2) m}|X|.
\end{equation}
%and hence
%\[
%2^{(\wt{\alpha}-\e/4)m}\le \cN(Y,m) \le 2^{\wt{\alpha}m}.
%\]
%Let $\sigma\in [0,d]^\ell$ denote the sequence associated to the regularity of $\mu_Y$. By Lemma \ref{lem:frostman-regular}\eqref{it:frostman-regular-ii}, if $T$ is large enough in terms of $\e$, then
%\begin{equation} \label{eq:beta}
%\frac{\sigma_1+\ldots+\sigma_\ell}{\ell} \in \left[\frac{\log\cN(Y,m)}{m}-\frac{1}{T},\frac{\log\cN(X,m)}{m}\right] .
%\end{equation}
The non-concentration hypothesis \eqref{eq:hyp-non-concentration} yields
\[
\mu_Y(B(x,|Y|^{1/d})) \le \mu(Y)^{-1} \mu(B(x,|X|^{1/d})) \le 2^{(\e/2-\kappa) m} \le 2^{-\kappa m/2},
\]
taking $\e$ small enough in terms of  $\kappa$.  We can therefore apply Theorem \ref{thm:multiscale-decomposition} to the measure $\mu_Y$, with $u=\kappa/2$. Let $\e_1=\e_1(\kappa/2)$ and $\xi=\xi(\kappa/2)$ be the numbers given by the theorem. Pick $0<\zeta<\e_1$, let $\tau=\tau(\zeta)$, and suppose that $T$ is chosen large enough in terms of $\zeta$, and $\ell$ is chosen large enough in terms of $\zeta$ and $T$, that the conclusions of Theorem \ref{thm:multiscale-decomposition} hold (with $\zeta$ in place of $\e$).

Let $\{[A_j,B_j)\}_j$ and $\{\alpha_j\}_j$ be the intervals and exponents obtained from Theorem \ref{thm:multiscale-decomposition} applied to $\mu_Y$. Write
\[
m_j=T (B_j-A_j).
\]
Let $\eta=\eta(\kappa), \delta_0=\delta_0(\kappa)$ be the numbers provided by Theorem \ref{thm:projection-combined}. Our aim is to apply Theorem \ref{thm:projection-combined} to the measures $\mu_Y^Q$, $Q\in \cD_{T A_j}$ and $\rho_x:= \theta_x\nu_{\Lambda_x}$, and the scales $2^{-m_j}$. Since $m_j\ge \tau m$ by the first part of Theorem \ref{thm:multiscale-decomposition}, the non-concentration assumption \eqref{eq:hyp-circle-non-concentration} on $\rho_x$ implies that
\begin{equation} \label{eq:rho-x-Frostman}
\rho_x(H^{(r)}) \le 2^{(\e/\tau) m_j}  r^\kappa \quad (H\in \mathbb{G}(d,d-1), r\in [2^{-m_j},1]).
\end{equation}
Given $Q\in \cD_{T A_j}$, we know from Theorem \ref{thm:multiscale-decomposition}\eqref{it:multiscale-ii}  that
\[
\mu_Y^Q(B(x,r)) \le 2^{\zeta m_j} r^{\alpha_j} \quad(r\in [2^{-m_j},1]).
\]
Thus, if $\zeta$ and then $\e$ are chosen so that $\e/\tau\le \zeta<\delta_0(\kappa)$, the hypotheses of Theorem \ref{thm:projection-combined} are met with $\zeta$ in place of $\delta$. Note that by making $\ell$ large enough in terms of all other parameters, we can make $\tau m_j$ large so that the theorem is applicable.

Recall that given $x\in [0,1)^d$,  the only element of $\cD_{T A_j}$ containing $x$ is denoted by $Q_{T A_j}(x)$. We apply Theorem \ref{thm:projection-combined} to the measures $\mu_Y^{Q_{T A_j}(x)}$ and $\rho_x$, to obtain sets $E(x,j)$ with
\[
\rho_x(E(x,j))\le 2^{-\zeta m_j} \le 2^{-\zeta \tau m},
\]
such that
\[
e\notin E(x,j) \Longrightarrow P_e\mu_Y^{Q_{T A_j}(x)} \text{ is } (\gamma(\alpha_j),\zeta,m_j)\text{-robust},
\]
where $\gamma$ is the function from Theorem \ref{thm:projection-combined}. Note that $\gamma(a)\ge a/d-6\zeta$ for all $a\in [0,d]$. A little algebra shows that, provided $\eta<\kappa/(2d)$ (which we may assume) and $\zeta$ is small enough in terms of $\kappa$, then
\[
\gamma(a) \ge  a/d+\eta  \text{ if }  a\in [(\eta+6\zeta)d/(d-1),d-d\eta-6d\zeta].
\]
Making $\eta$ and $\zeta$ smaller in terms of $\xi$ and $d$ only, we may further assume that
\begin{equation} \label{eq:estimate-gamma}
\gamma(a) \ge \left\{
\begin{array}{ccc}
  a/d-6\zeta & \text{ if } & a\notin [\xi,d-\xi] \\
  a/d+\eta & \text{ if } & a\in [\xi,d-\xi]
\end{array}
\right..
\end{equation}

Let $E(x)=\cup_j E(x,j)$. Then
\begin{equation}  \label{eq:rho-bad-set-small-measure}
\rho_x(E(x)) \le (\ell/\tau) 2^{-\zeta \tau m} \le 1/2,
\end{equation}
assuming $\ell$ is large enough in terms of $\zeta$. Recalling that $\rho_x = \theta_x\nu_{\Lambda_x}$ and that $\nu(\Lambda_x)\ge 2^{-\e m}$ by assumption, this shows that there exists a set $G_x\subset\Lambda$ such that $\nu(G_x)\ge 2^{-\e m}/2$ and $\theta_x(\lambda)\notin E(x)$ for all $\lambda \in G_x$. A standard argument shows that the set $\{ (\lambda,x):\theta_x(\lambda)\notin E(x)\}$ is Borel. By Fubini, we can find $\lambda\in \supp(\nu)$ and a set $Z\subset Y$ with
\begin{equation} \label{eq:Z-large-Y}
|Z|\ge \tfrac{1}{2} 2^{-\e m}|Y|\ge 2^{-2\e m}|X|,
\end{equation}
and such that $\lambda \in G_x$ for all $x\in Z$. We work with this value of $\lambda$ for the rest of the proof, and note that
\begin{equation}  \label{eq:robust-conclusion}
P_{\theta_x(\lambda)}\mu_Y^{Q_{T A_j}(x)} \text{ is } (\gamma(\alpha_j),\zeta,m_j)\text{-robust for all $j$ and $x\in Z$}.
\end{equation}
On the other hand, we estimate
\begin{align*}
\sum_j \gamma(\alpha_j). m_j &\ge \sum_{j:\alpha_j\notin [\xi,d-\xi]} (\alpha_j/d- 6\zeta)m_j + \sum_{j:\alpha_j\in [\xi,d-\xi]} (\alpha_j/d+\eta)m_j & (\text{Eq. } \eqref{eq:estimate-gamma})\\
&\ge -6\zeta m + \frac{1}{d}\sum_{j} \alpha_j m_j + \eta\sum_{j:\alpha_j\in [\xi,d-\xi]}  m_j\\
&\ge -6\zeta m + (\log\cN(Y,m) - 2\zeta)/d + \eta\xi m & (\text{Thm }\ref{thm:multiscale-decomposition}\eqref{it:multiscale-iii}\text{-}\eqref{it:multiscale-iv})\\
&\ge \log \cN(X,m)/d +(\eta\xi -9\zeta) m & (\text{Eq. } \eqref{eq:Y-large-X},\e\le \zeta)
\end{align*}
Hence, making $\zeta$ small enough in terms of $\xi$ and $\eta$, and writing $\eta'=\xi\eta/2$,
\begin{equation} \label{eq:multiscale-dim-gain}
\frac{1}{m}\sum_j \gamma(\alpha_j). m_j  \ge \frac{\log\cN(X,m)}{d m} + \eta'.
\end{equation}

Note that the number $q$ of intervals $[A_j,B_j]$ is at most $1/\tau$.  For each $Q\in \cD_{T A_j}$ such that $Q\cap Z\neq\varnothing$, pick some $x_Q\in Q\cap Z$. We are now ready to apply Proposition \ref{prop:good-bound-from-below}. Suppose $Z'\subset Z$, $|Z'|\ge 2^{-\e m}|Z|$. Recalling  \eqref{eq:Z-large-Y}, we get that $|Z'|\ge 2^{-2\e m-1}|Y|$. Appyling Proposition \ref{prop:good-bound-from-below} to $\nu=\mu_{Z'}$, $\mu=\mu_Y$, $\Delta=2^{2\e m+1}$, and the sequence $([T A_j, T B_j])_{j=1}^q$, we get:
\begin{equation} \label{eq:lower-bound-multiscale-entropy}
 H_m(F_\lambda\mu_{Z'}) \ge - O_{\mathcal{F}}(1/\tau)   + \sum_{j=1}^{q} \sum_{Q\in\cD_{T A_j}: Q\cap Z'\neq\varnothing} \mu_{Z'}(Q)  H_{m_j}^{m 2^{2\e m+1}}\left(P_{\theta_{x_Q}(\lambda)}\mu_Y^Q\right).
\end{equation}
Recall from \eqref{eq:robust-conclusion} that $P_{\theta_{x_Q}(\lambda)}\mu_Y^Q$ is $(\gamma(\alpha_j),\zeta,m_j)$-robust. Lemma \ref{lem:robust-to-entropy} then yields that (provided $\ell$ is large enough in terms of $\zeta$, $\e$ and $\tau$, which makes $m_j$ large enough in terms of $\zeta$, $\e$)
\[
H^{2^{\zeta m_j/2}}_{m_j}(P_{\theta_{x_Q}(\lambda)}\mu_Y^Q) \ge  (\gamma(\alpha_j)-\e)m_j.
\]
If $\e$ is small enough in terms of $\zeta$ of $\tau$, then
\[
2^{\zeta m_j/2} \ge 2^{\zeta \tau m/2} \ge m 2^{2\e m+1}.
\]
Recalling \eqref{eq:multiscale-dim-gain} and \eqref{eq:lower-bound-multiscale-entropy}, we conclude that, provided $m$ is large enough in terms of $\tau$, $\e$ and $\mathcal{F}$,
\begin{align*}
H_m(F_\lambda\mu_{Z'}) &\ge -\e m + \sum_{j=1}^{q} \sum_{Q\in\cD_{T A_j}: Q\cap Z'\neq\varnothing} \mu_{Z'}(Q) (\gamma(\alpha_j)-\e)m_j\\
&\ge \log\cN(X,m)/d + (\eta'-2\e)m.
\end{align*}
Thus, assuming $\e<\eta'/4$,
\[
\log\cN(F_\lambda(Z'),m) \ge  \log\cN(X,m)/d +\eta'/2,
\]
giving the claim with $\eta'/2$ in place of $\eta$ and $Z$ in place of $X'$.

\subsection{\texorpdfstring{The case $\Lambda_x=\Lambda$ for all $x$}{The case of Lambda constant}}
%\subsection{The case $\Lambda_x=\Lambda$ for all $x$}

We indicate what changes are needed in the proof to obtain the stronger conclusions when $\Lambda_x=\Lambda$. As before, let $\mu=\mathbf{1}_X/|X|$. We need to apply Lemma \ref{lem:decomposition-regular} to $\mu$ (with the parameter $1.5\e$ in place of $\e$) instead of Lemma \ref{lem:subset-regular}, to obtain the sets $(X_i)_i$. Taking $T$ large enough in terms of $\e$, we have $\mu(X_i)\ge 2^{-2\e m}$. Other than the $X_i$ having slightly smaller measure, depending on $\e$, the analysis we did for the set $Y$ in the proof of Theorem \ref{thm:main-discrete} carries over to each of the $X_i$ verbatim. In particular, the bound \eqref{eq:rho-bad-set-small-measure} holds for any $X_i$ and $x\in X_i$.  We take $\e$ small enough in terms of $\zeta,\tau$ so that
\[
\rho_x(E(x)) =\nu\{\lambda: \theta_x(\lambda)\in E(x) \} \le 2^{-5\e m}.
\]
Write $\wt{X}=\cup_i X_i$. By Fubini, denoting Lebesgue measure by $\cL$,
\[
(\nu\times \cL)\{(\lambda,x)\in \Lambda\times \wt{X}: \theta_x(\lambda)\in E(x) \}\le 2^{-5\e m}|\wt{X}|.
\]
It follows that the set
\[
\Lambda' = \{\lambda\in\Lambda: |x\in\wt{X}: \theta_x(\lambda)\in E(x)|\ge 2^{-4\e m}|\wt{X}|\}
\]
has $\nu$ measure at most $2^{-\e m}$.

Fix $\lambda\in\Lambda\setminus\Lambda'$ and $X'\subset X$ with $|X'|\ge 2^{-\e m}|X|$. By the first part of Lemma \ref{lem:decomposition-regular} (which we are applying with $1.5\e$ in place of $\e$), we can find $i$ with
\[
\frac{|X'\cap X_i|}{|X_i|} \ge \frac{|X'|}{|X|}-2^{-1.5\e m} \ge 2^{-\e m}/2.
\]
Since $\lambda\notin\Lambda'$,
\[
|x\in\wt{X}: \theta_x(\lambda)\in E(x)| < 2^{-4\e m}|X| < \tfrac{1}{2}|X'\cap X_i|.
\]
Let $Z= \{ x\in X'\cap X_i: \theta_x(\lambda)\notin E(x)\}\subset X'$. We have seen that $\mu_{X_i}(Z)\ge 2^{-\e m}/4$. Now starting with \eqref{eq:robust-conclusion}, the exact same argument from the previous section (with $X_i$ in place of $Y$) yields the desired conclusion
\[
\cN(F_\lambda(X'),m) \ge \cN(F_\lambda(Z),m) \ge  2^{m\eta'/2}\cN(X,m)^{1/d}.
\]

\subsection{Proof of Theorem \ref{thm:main-continuous}}

Our next goal is to prove Theorem \ref{thm:main-continuous}. We will in fact establish the following stronger fact.
\begin{theorem} \label{thm:main-continuous-measure}
Given $\kappa>0, 0<\alpha<d$ there is $\eta=\eta_d(\kappa,\alpha)>0$ (that can be taken continuous in $\kappa,\alpha$) such that the following holds.

Let $(\Lambda,\nu)$ be a Borel probability space, let $U\subset\R^d$ be an open domain, and let $\mu$ be a Borel probability measure on $U$ such that
\[
\mu(B(x,r)) \le C\, r^\alpha, \quad (x\in U, r>0).
\]
Let $F:\Lambda \times U\to\R$ be a Borel function such that, for each $\lambda\in\Lambda$, the map $F_\lambda(x)=F(\lambda,x):U\to\R$ is $C^2$ without singular points. For each $x\in \R^d$  define the map
\[
\theta_x(\lambda) = \dir(\nabla F_\lambda(x)) \in S^{d-1}.
\]
Suppose further that for $\mu$-almost all $x$ there are a set $\Lambda_x$ with $\nu(\Lambda_x)\ge c$ and a number $C_x>0$ satisfying the decay condition \eqref{eq:main-thm-cont-projective-decay}.

Then there are a constant $c'=c'(c)>0$ and  a set $\Lambda_0$ with $\nu(\Lambda_0)\ge c'$, such that the following holds: for all $\lambda\in\Lambda_0$ there is a set $A_\lambda$ with $\mu(A_\lambda)\ge c'$ such that, for all sufficiently large $m$, the measure $F_\lambda (\mu_{A_\lambda})$ is $(\alpha/d+\eta,\eta,m)$-robust.

Moreover, $c'\to 1$ as $c\to 1$.
\end{theorem}

Note that Theorem \ref{thm:main-continuous} follows by considering a Frostman measure $\mu$ on $A$ of exponent arbitrarily close to $\alpha$, and applying  Lemma \ref{lem:robust-to-dimension}. The proof of Theorem \ref{thm:main-continuous-measure} is very similar to that of Theorem \ref{thm:main-discrete}. We indicate the required changes.

By passing to a subset of nearly full $\mu$-measure, we may assume that $C_x\le C$ for all $x$. Denote $\rho_x=\theta_x\nu_{\Lambda_x}$. We have the following (stronger) analog of \eqref{eq:rho-x-Frostman}:
\begin{equation} \label{eq:positive-dim-of-directions}
\rho_x(H^{(r)}) \le C r^\kappa\quad (r\in (0,1], H\in \mathbb{G}(d,d-1)).
\end{equation}

Unlike the proof of Theorem \ref{thm:main-discrete}, we need to consider all small scales at once.  We can define the parameters $\zeta, \tau, \xi, \eta,\delta_0$ arising from Theorems \ref{thm:multiscale-decomposition} and \ref{thm:projection-combined}; these are independent of the scale. Fix a small parameter $\e$, a large integer $T$ and an even larger integer $\ell$. We continue to write $m=T\ell$. As before, $\e, T,\ell$ can depend on all the previous parameters, $T$ can depend on $\e$ and $\ell$ can depend on $T$ and $\e$.

For each large $\ell$, we apply Lemma \ref{lem:decomposition-regular} to $\mu, T$ and $\e$, to obtain a family of sets $(X_{\ell,i})_i$. For each $i$ and $x\in \cup_i X_{\ell,i}$, we define the set $E(x)$ as in the proof of Theorem \ref{thm:main-discrete}, but now denote it by $E(x,\ell)$. For completeness, we define $E(x,\ell)=\varnothing$ if $x\notin \cup_i X_{\ell,i}$. As before, for each $x\in X$ we have
\[
\rho_x(E(x,\ell)) \le (m/\tau) 2^{-\zeta \tau m}.
\]
Hence, provided $\ell_1$ is large enough in terms of $\zeta,T,\e$ only, we can ensure $\rho_x(E(x))\le \e$ for all $x\in X$, where
\[
E(x) =\bigcup_{\ell=\ell_1}^\infty E(x,\ell).
\]
Unwrapping the definitions, this means that
\[
\nu\{ \lambda\in\Lambda_x: \theta_x(\lambda)\notin E(x) \}\ge \nu(\Lambda_x)(1-\e)\ge c(1-\e).
\]
By Fubini, if we let $c'=1-\sqrt{1-c(1-\e)}$, there is a set $\Lambda_0$ with $\nu(\Lambda_0)\ge c'$ such that if $\lambda\in \Lambda_0$ then
\begin{equation} \label{eq:def-Z}
\mu(Z_\lambda)\ge c', \quad\text{where }Z_\lambda=\{ x: \theta_x(\lambda)\notin E(x)\} .
\end{equation}
We fix $\lambda\in\Lambda_0$ from now on and write $Z=Z_\lambda$.

%It is enough to show that, for some $\e=\e(\kappa,\alpha)>0$, $\eta'=\eta'_d(\kappa,\alpha)>0$, if $m$ is large enough, then for all $Y$ with $\mu_Z(Y)\ge 2^{-\e m/2}$ we have
%\[
%H_m(F_\lambda \mu_Y)\ge m(\alpha/d+\eta').
%\]
Let $\e=\e(\kappa,\alpha)>0$, $\eta'=\eta'(\kappa,\alpha)>0$ be numbers to be determined. Suppose that $F_\lambda\mu_Z(A)\ge 2^{-\e m/2}$; our goal is to show that, provided $m$ is large enough,
\begin{equation} \label{eq:large-counting-number-to-prove}
\log\cN(A,m)\ge m(\alpha/d+\eta').
\end{equation}
It is enough to consider the case $m=\ell T$ with $\ell\ge \ell_1$ sufficiently large.

Let $Y=F_\lambda^{-1}A$. Since $\mu(Y)\ge 2^{-\e m/2}$, the first part of Lemma \ref{lem:decomposition-regular} guarantees that there is a set $X_{\ell,i}$ with $\nu(Y) \ge 2^{-\e m}$, where $\nu=\mu_{X_{\ell,i}}$. At this point we are working at a fixed (small) scale, with a regular measure $\nu$ and a set $Y$ of sufficiently large measure, which is a subset of $Z$ given in \eqref{eq:def-Z}. Then essentially the same argument from the proof of Theorem \ref{thm:main-discrete} shows that
\[
H_m(F_\lambda \nu_Y)\ge m(\alpha/d+\eta'),
\]
which, since $F_\lambda\nu_Y$ is supported on $A$, gives \eqref{eq:large-counting-number-to-prove}. The only difference is that $\nu$ might be very uniform, so that the upper bound in \eqref{eq:intermediate-dimension} needs not hold. However, this case is even better for us. Formally, let $\ol{\alpha} = (1+\alpha)/2$. Then we consider the cases $\sigma_1+\ldots+\sigma_{\ell}\le \ol{\alpha}\ell$ and $\sigma_1+\ldots+\sigma_\ell> \ol{\alpha}\ell$ separately. In the first case,  \eqref{eq:intermediate-dimension} does hold, with $u=u(\alpha)$, so we can proceed exactly as in the proof of Theorem \ref{thm:main-discrete}. Otherwise, we apply Theorem \ref{thm:multiscale-decomposition-Ahlfors} in place of Theorem \ref{thm:multiscale-decomposition}, and use the fact that the function $\gamma$ from Theorem \ref{thm:projection-combined} satisfies $\gamma(\alpha)\ge \alpha/d-7\delta$. In the end, the lower estimate we get on $H_m(\nu_Y)$ in this case is $m(\overline{\alpha}-\text{error term})$, where we can make the error term arbitrarily small, completing the proof.

\section{Applications and generalizations}
\label{sec:applications}

\subsection{Real analytic families}
\label{subsec:real-analytic}

In this section we derive several applications and generalizations of Theorems \ref{thm:main-continuous} and \ref{thm:main-discrete}. We start with a statement about one-parameter real analytic families.

\begin{theorem} \label{thm:real-analytic}
Given $d\ge 2$, $\alpha\in (0,d)$, $\kappa\in (d-2,2)$ there is $\eta=\eta_d(\alpha,\kappa)>0$ (depending continuously on the parameters) such that the following holds.

Let $F_y(x)=F(x,y):\R^d\times \R\to \R$ be a real-analytic function. Let $X\subset\R^d$ be a Borel set of Hausdorff dimension $>\alpha$, and suppose $y\mapsto \dir F'_y(x)$ is non-constant for all $x\in X$ outside of a set of dimension $<\hdim(X)$. Then
\[
\hdim\left\{y: \hdim(F_y X)\le \frac{\alpha}{d}+\eta \right\}\le \kappa.
\]
\end{theorem}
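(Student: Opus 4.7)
My plan is to apply Theorem \ref{thm:main-continuous} with a Frostman measure $\nu$ supported on the exceptional set $E := \{y : \hdim(F_y X) \le \alpha/d + \eta\}$. Since $y \in \R$, the exceptional set has Hausdorff dimension at most $1$, so the conclusion is automatic unless $\kappa < 1$, which forces $d = 2$ as the only non-trivial case; I focus on this setting.

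Suppose for contradiction that $\hdim(E) > \kappa$ for some small $\eta > 0$ to be determined. Choose $\kappa' \in (\kappa, \hdim(E))$ and let $\nu$ be a $\kappa'$-Frostman probability measure supported on a compact interval $\Lambda \subset E$. By the non-constancy hypothesis, the set $X' \subset X$ of points $x$ for which $y \mapsto \dir F'_y(x)$ is non-constant on $\Lambda$ satisfies $\hdim X' = \alpha$; I will pass to a compact subset of $X'$ contained in a neighborhood where $F$ has no singular point. The essential step is to verify, for all $x \in X'$,
\[
\theta_x\nu(H^{(r)}) \le C\, r^{\kappa''} \quad\text{for all } H\in\mathbb{G}(2,1),\ r\in(0,1],
\]
with a positive exponent $\kappa''$ uniform in $x$.

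For $x \in X'$ and $v \in S^1$, consider the real-analytic function $g(x,y,v) := \langle \nabla_x F(x,y), v\rangle$. Non-constancy of $\theta_x$ implies $g(x,\cdot,v) \not\equiv 0$ on $\Lambda$ for every $v$: otherwise the image of $\theta_x$ would be contained in $H_v \cap S^1 = \{\pm v\}$, which is disconnected, contradicting the fact that the image is a connected analytic arc. The plan is then to invoke a Lojasiewicz inequality with parameters: there exist $k, C > 0$ such that, uniformly in $(x,v) \in X' \times S^1$, $\leb\{y \in \Lambda : |g(x,y,v)| < r\} \le C r^{1/k}$, with a uniform bound on the number of zeros of $g(x,\cdot,v)$ in $\Lambda$. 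Combined with $\nu(B(y,r)) \le C r^{\kappa'}$ (realizing the sublevel set as a bounded union of short intervals) and the lower bound on $|\nabla_x F|$, this gives $\theta_x\nu(H^{(r)}) \le C'\, r^{\kappa'/k}$.

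With $\kappa'' := \kappa'/k$ and $\Lambda_x \equiv \Lambda$, Theorem \ref{thm:main-continuous} produces some $\lambda \in \Lambda \subset E$ with $\hdim(F_\lambda X') \ge \alpha/d + \eta_d(\kappa'',\alpha)$; combined with $X' \subset X$ and $\lambda \in E$, this forces $\eta \ge \eta_d(\kappa'',\alpha)$, which is the desired contradiction provided $\eta$ is taken strictly smaller. I expect the principal obstacle to be the uniform parametric Lojasiewicz bound: controlling both the order of vanishing and the number of zeros of $g(x,\cdot,v)$ uniformly in $(x,v)$ as the parameters vary in a compact set where $g$ is not identically zero in $y$. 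This should follow from standard subanalytic geometry (e.g.\ Bierstone--Milman type parametric analytic preparation, or resolution of singularities applied to $g$ on a compact neighborhood), although care is needed because the resulting $k$ depends on $F$; one either restricts the dependence into the claim $\eta = \eta_d(\alpha,\kappa)$ or performs an additional stratification $X' = \bigcup_M X_M$ by sets of controlled Lojasiewicz order, combined with the continuity of $\eta_d$ in its arguments.
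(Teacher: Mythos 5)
Your outline — contradiction, Frostman measure $\nu$ on the exceptional set, $\alpha$-Frostman measure $\mu$ on $X$, then Theorem \ref{thm:main-continuous} — agrees with the paper's, and your observation that only $d=2$ is non-trivial is correct (though the paper's argument runs uniformly in $d\ge 2$, converting the ball-Frostman decay of $\theta_x\nu$ into hyperplane decay of exponent $\kappa-(d-2)$). The gap is in how you verify \eqref{eq:main-thm-cont-projective-decay}. Even granting the parametric Lojasiewicz estimate whose uniformity you rightly flag as delicate, it would give $\theta_x\nu(H^{(r)})\le Cr^{\kappa'/k}$ with $k$ the Lojasiewicz exponent of the family $g(x,\cdot,v)$, and $k$ depends on $F$. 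Then $\eta_d(\kappa'/k,\alpha)$ also depends on $F$, contradicting the assertion that $\eta=\eta_d(\alpha,\kappa)$. Neither of your proposed repairs closes this: weakening the claim so that $\eta$ may depend on $F$ changes the theorem, and stratification $X'=\bigcup_M X_M$ by Lojasiewicz order does not help because $\eta_d(\kappa'/M,\alpha)\to 0$ as $M\to\infty$, so no uniform $\eta$ sits below all strata.

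The missing idea is localization. You do not need the non-concentration uniformly over all of $X'$, only over a set of positive $\mu$-measure — and this makes the Lojasiewicz machinery entirely unnecessary and removes the $F$-dependence. For a fixed $x_0\in\supp(\mu)$ the map $y\mapsto\theta_{x_0}(y)$ is non-constant and real-analytic, so the zero set of $\partial_y\theta_{x_0}$ is discrete and hence $\nu$-null; thus for some coordinate $j$ and some $y_0\in\supp(\nu)$ one has $\partial_y\theta_{x_0,j}(y_0)\ne 0$. By joint continuity of the derivative there are open balls $U\ni x_0$ and $V\ni y_0$, meeting $\supp(\mu)$ and $\supp(\nu)$, on which $|\partial_y\theta_{x,j}(y)|\ge c>0$. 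Then $y\mapsto\theta_{x,j}(y)$ is bi-Lipschitz on $V$ uniformly in $x\in U$, so $\theta_x\nu_V$ enjoys the \emph{full} Frostman decay of exponent $\kappa$, with a constant $C_x$ depending on $F$ — which the hypothesis \eqref{eq:main-thm-cont-projective-decay} explicitly permits. Applying Theorem \ref{thm:main-continuous} with $A=U\cap\supp(\mu)$ and $\Lambda=V\cap\supp(\nu)$ then yields $\eta=\eta_d(\kappa-(d-2),\alpha)$, independent of $F$, with no resolution-of-singularities input at all.
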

\begin{proof}
Let $\eta(\kappa,\alpha,d)$ be the number provided by Theorem \ref{thm:main-continuous}. Let $\mu$ be a Frostman measure of exponent $\alpha$ supported on the $x\in X$ for which $y\mapsto \theta_x(y):=\dir F'_y(x)$ is nonconstant. It is enough to show that if $Y$ is a compact set with $\hdim(Y)>\kappa$ then there is $y\in Y$ such that $\hdim(F_y X)>\alpha/d+\eta$.

Let $\nu$ be a $\kappa$-Frostman measure on $Y$. By analyticity and the fact that $\theta_x(y)$ is non-constant, we can find open balls $U\subset\R^d, V\subset \R$ meeting $\supp(\mu)$ and $\supp(\nu)$ respectively, and an index $j\in \{1,\ldots,d-1\}$ such that
\[
\big|\frac{d}{dy}\theta_{x,j}(y)\big| \ge c>0\quad (x\in U, y\in V),
\]
where $\theta_{x,j}$ is the $j$-th coordinate of $\theta_x$. (To be more precise, this holds in a local chart of $S^{d-1}$.) Then $\theta_x\nu_V$ satisfies a Frostman condition of exponent $\kappa$, uniformly in $x\in U$. Indeed, it is enough to prove this for the projection $\theta_{x,j}\nu_V$ of $\theta_x\nu_V$ to its $j$-th coordinate, but this holds since $\theta_{x,j}$ is  bi-Lipschitz on $V$, uniformly in $x\in U$.

Since $\kappa>d-2$, the assumptions of Theorem \ref{thm:main-continuous} are satisfied with $\Lambda=U\cap \supp(\nu)$  and $V\cap \supp(\mu)$ in place of $X$ (and $\Lambda_x\equiv\Lambda$). The conclusion of  Theorem \ref{thm:main-continuous} is exactly what we were trying to prove.
\end{proof}

\begin{remark}
The proof shows that, instead of analyticity, it is enough to assume that $F$ is $C^2$ and the set of zeros of $\frac{d}{dy}\theta_x(y)$ has Hausdorff dimension $<\kappa$ for all $x$ outside of a set of Hausdorff dimension $<\hdim(X)$.
\end{remark}

\subsection{Distance sets}
\label{subsec:distance-sets}

In this section we apply Theorem \ref{thm:main-continuous} to prove  Theorem \ref{thm:main-distance-sets}. In order to verify that the assumptions of Theorem \ref{thm:main-continuous}, we rely on the following spherical projection estimate. Let
\[
e(x,y) = \frac{y-x}{|y-x|}
\]
be the direction spanned by two different points in $\R^d$, and write $e_x(y)=e(x,y)$ for the spherical projection with center $x$.
\begin{theorem} \label{thm:radial-positive-dim}
Fix $1\le k<d$. Given $\kappa,\alpha>0$ there are $c,\eta>0$ (depending continuously on $\alpha,\kappa$) such that the following holds.

Let $\mu,\nu\in\cP(B^d(0,1))$ be measures satisfying decay conditions
\begin{align*}
\mu(V^{(r)}) &\le C_\mu r^{\kappa},\\
\nu(V^{(r)}) &\le C_\nu r^{\alpha},
\end{align*}
for all $V\in \mathbb{A}(d,k-1)$ and $0<r\le 1$. (If $k=1$, $V^{(r)}$ is a ball of radius $r$.) Suppose $\nu$ gives zero mass to every affine $k$-plane.
Then  for all $x$ in a set $E$ of $\mu$-measure $\ge 1/2$ there is a set $K=K(x)$ with $\nu(K(x))\ge 1/2$ such that
\begin{equation} \label{eq:spherical-proj-non-concentration}
e_x(\nu_{K(x)})(W^{(r)}) \le r^\eta \quad r\in (0,r_0], W\in \mathbb{G}(d,k),
\end{equation}
where $r_0>0$ depends only on $d, \mu, C_\nu, \alpha$.

Finally, the set $\{ (x,y) : x\in E, y\in K(x) \}$ is compact.
\end{theorem}
In dimension $d=2$, this is due to T.~Orponen \cite{Orponen19}. The argument in higher dimensions is similar; for completeness, we include the details of the proof in Appendix \ref{sec:spherical-projections}.

\begin{proof}[Proof of Theorem \ref{thm:main-distance-sets}]
We first prove the theorem for the Euclidean metric, and comment on the changes required to handle other norms at the end.

We may assume that $\hdim(X)>\alpha$, provided we can show the value of $\eta$ is continuous in the parameters (we can then apply the claim to $\alpha-\e$ in place of $\alpha$ for a suitably small $\e$ to deduce that the statement also holds for $\hdim(X)=\alpha$). It is enough to show that there is $\eta=\eta_d(\alpha,\kappa)>0$ such that, for all compact sets $Y$ with $\hdim(Y)> \kappa$, there is $y\in Y$ satisfying $\hdim(\Delta^y X)\ge \alpha/d+\eta$. We may assume that $Y$ is disjoint from $X$ and both $X$ and $Y$ are contained in the unit ball. Let $\mu$ be an $\alpha$-Frostman measure on $X$.

Let $\nu$ be a Frostman measure on $Y$ with exponent $\kappa$. Our goal is to apply Theorem \ref{thm:main-continuous} with $\Lambda=Y$ and $F_y(x)=|x-y|$. Note that
\[
F'_y(x)=y-x/|y-x|=e(x,y)\in S^{d-1}.
\]
Thus the maps $\theta_x$ from Theorem \ref{thm:main-continuous} are the spherical projections $e_x(y)=e(x,y)$.

Suppose first that there is a hyperplane $P$ with $\nu(P)>0$; we can then assume that $Y=P$. We consider two sub-cases. If $\hdim(X\cap P)\ge \alpha$, then we are in the $(d-1)$-dimensional setting and we can argue by induction to get an even better estimate (note that the base case $d=1$ is trivial). Otherwise, $\hdim(X\setminus P)=\hdim(X)$, so we may assume that $X\cap P=\varnothing$.

Now, for $x\notin P$, the map $y\mapsto e_x(y):P\cap B(0,1) \to S^{d-1}$ is bi-Lipschitz onto its image, with a constant that depends on $\dist(x,P)$. Hence $e_x\nu(B_r)\lesssim_x r^\kappa$ for all $r$-balls $B_r$ and $x\in X$. For every linear hyperplane $H\in \mathbb{G}(d,d-1)$ we can cover $S^{d-1}\cap H^{(r)}$ by $\lesssim r^{-(d-2)}$ balls of radius $r$. Since $\kappa>d-2$, this shows that the assumptions of
Theorem \ref{thm:main-continuous} hold (with $\kappa-(d-2)$ in place of $\kappa$). Theorem \ref{thm:main-continuous} then gives the desired conclusion.

Suppose now that $\nu$ gives zero mass to all hyperplanes. In this case we apply Theorem \ref{thm:radial-positive-dim} with $k=d-1$. As before, the hypotheses hold because $\kappa,\alpha>d-2$. Let $\kappa'=\kappa'(\alpha,\kappa)>0$ be the value given by Theorem \ref{thm:radial-positive-dim}. The theorem provides with a set $X'$ with $\mu(X')\ge 1/2$, and sets $K(x)$ for each $x\in X'$ with $\nu(K(x))\ge 1/2$, such that \eqref{eq:spherical-proj-non-concentration} holds. The claim now follows from Theorem \ref{thm:main-continuous} applied to the set $X'$ (with parameters $\alpha$ and $\kappa'$).

We now consider the case of a $C^2$ norm $N$ with unit ball of everywhere positive Gaussian curvature. Note that $N'(x)=N'(x/|x|)$ for all non-zero $x$. Hence, defining $\varphi(e)=N'(e):S^{d-1}\to \R^d$ and $\psi=\dir\circ\,\varphi:S^{d-1}\to S^{d-1}$, we have
\[
\theta_x(y)=\dir \frac{d}{d x} N(x-y) = \dir\varphi(e_x(y))=\psi(e_x(y)),
\]
whence $\theta_x=\psi\circ e_x$. Now the hypothesis of everywhere positive Gaussian curvature translates to the map $\psi(e)$ having non-vanishing Jacobian (this can be seen e.g. by expressing the unit sphere of $N$ as a graph $y=f(x)$ in local coordinates, and noting that the Jacobian of $\psi$ is essentially the Hessian of $f$). Therefore we can cover $S^{d-1}$ by finitely many patches on which $\psi$ is a diffeomorphism (onto its image) and hence bi-Lipschitz. It follows that any decay condition enjoyed by $e_x\nu$ is still valid for $\psi e_x\nu=\theta_x\nu$, with different constants but the same exponent. With this observation, the proof in the Euclidean case goes through unchanged.
\end{proof}

\begin{remark} \label{rem:general-norms}
The hypotheses on the norm $N$ can be substantially weakened. What the argument above actually uses is that the Gauss map of the unit ball of $N$ locally has a Lipschitz inverse. This can be relaxed further; for example, if the Gauss map locally has a H\"{o}lder inverse, then the argument still goes through, except that the dimension gain $c$ will depend also on the H\"{o}lder exponent of these local inverses. In particular, Theorem \ref{thm:main-distance-sets} extends, for example, to all $\ell^p$ norms for $p\in (1,\infty)$.
\end{remark}

\subsection{Discretized incidences}
\label{subsec:incidences}

As noted in the introduction, the circle of problems studied in this paper are related to incidence counting in the discretized setting. In this section we show how Theorem \ref{thm:main-discrete} can be used to deduce non-trivial incidence counting bounds, under suitable assumptions. Let $\{ C_a:a\in B^2(0,1)\}$ be a parametrized family of planar curves. Fix $\delta$-separated sets $E, A\subset B^2(0,1)$. We are interested in bounding the size of the (discretized) incidence set
\[
\mathcal{I}(E,A) = \{ (p,a)\in E\times A: \dist(p,C_a)<\delta \}.
\]
Let us make the further assumptions that $E\subset X\times \R$ for some $\delta$-separated set $X\subset [-1,1]$, and that each curve meets vertical lines in a uniformly bounded number of points. In this case, we have the trivial bound
\[
|\mathcal{I}(E,A)|  \lesssim |X||A|.
\]
In general, this bound is sharp, for example if $E^{(2\delta)}$ is a small square and all curves parametrized by $A$ cross this square from side to side. It is also sharp in cases where both $E$ and $A$ satisfy strong non-concentration assumptions. The examples are given by ``train tracks'', a well known object that many problems in discretized geometry have to grapple with (see, for example, \cite[Figure 1]{KatzTao01}). For concreteness, let $X\subset[-1,1]$ be a well separated set of size $\approx \delta^{-1/2}$ (for example, it could be an arithmetic progression of gap $\delta^{-1/2}$), and let $E$ be a maximal $\delta$-separated subset of $X\times [0,\delta^{1/2}]$ (so $E^{(2\delta)}$ resembles a train track). Then if $A$ is any set of $\delta$-separated lines that cross the rectangle $[0,1]\times [0,\delta^{1/2}]$ from side to side, the incidence count $|\mathcal{I}(E,A)|$ equals $|X||A|$. The family of all such lines is essentially (parametrized by) a ball of radius $\delta^{1/2}$, and is therefore maximally concentrated, but if we take a spread-out subset, such as horizontal lines $y=\delta j$ with $j\in [0,\delta^{-1/2}]$, then $A$ is highly non-concentrated, and we still have $|\mathcal{I}(E,A)|\approx |X||A|$. The next theorem shows that, under non-concentration assumptions on $X$ and $A$, and a further hypothesis that rules out train-track examples, an improvement over the trivial bound can be achieved.

\begin{theorem} \label{thm:incidences}
Given $\kappa\in (0,1)$ and $c>0$ there is $\e(\kappa)>0$ such that the following holds for all $\delta$ small enough in terms of $\kappa, c$.

Let $G:B^2(0,1)\times [-1,1]\to\R$ be a $C^2$ map of $C^2$ norm $\le c^{-1}$, and such that
\[
|\partial G/\partial a(\cdot)| \ge c \quad\text{on } B^2(0,1)\times [-1,1].
\]
Assume, further, that
\begin{equation} \label{eq:non-triviality-assumption}
\left|\frac{d}{dx} \dir \left( \frac{\partial G}{\partial a}(\cdot)\right)\right| \ge c\quad\text{on } B^2(0,1)\times [-1,1].
\end{equation}

Let $E,A\subset B^2(0,1)$, $X\subset [-1,1]$ be $\delta$-separated sets such that $E\subset X\times \R$. Further, assume that
\begin{align}
|E|&\le \delta^{-\e}|X||A|^{1/2}, \label{eq:size-assumption-E} \\
|A\cap B(a,\delta|A|^{1/2})|&\le \delta^{\kappa} |A| &(a\in B^2(0,1)),  \nonumber\\
|X\cap B(x,r)| &\le \delta^{-\e} r^{\kappa}|X|& (x\in [-1,1], r\in [\delta,1]). \nonumber
\end{align}
Then
\[
| \{(p,a)\in E\times A: \dist(p,\textup{Graph}(G_a))<\delta\} | \le \delta^\e |X||A|.
\]
\end{theorem}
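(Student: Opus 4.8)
The plan is to apply Theorem~\ref{thm:main-discrete} in the plane ($d=2$) to the one-parameter family $F_t(a):=G(a,t)$, indexed by $t\in X$, acting on a dyadic thickening of the set $A$, and then to convert the ``robust expansion'' it yields into the incidence bound. First I would reduce to a clean combinatorial statement. Since $C_a=\mathrm{Graph}(G_a)=\{(s,G(a,s)):s\in[-1,1]\}$ is a Lipschitz graph of norm $\le c^{-1}$, a point $p=(t,y)\in E$ (so $t\in X$) with $\dist(p,C_a)<\delta$ must satisfy $|G(a,t)-y|\le C_c\,\delta$; and because $E$ is $\delta$-separated, for each fixed $t$ and $a$ there are only $O_c(1)$ such $y$'s. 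Hence, writing $E_t:=\{y:(t,y)\in E\}$ and $A_t:=\{a\in A:G(a,t)\in E_t^{(C_c\delta)}\}$, we get $|\mathcal I(E,A)|\lesssim_c\sum_{t\in X}|A_t|$. Moreover $E_t$ is $\delta$-separated in $\R$, so $\cN(E_t^{(C_c\delta)},m)\lesssim_c|E_t|$, and since $F_t(A_t)\subset E_t^{(C_c\delta)}$ we obtain the crucial bound $\cN(F_t A_t,m)\lesssim_c|E_t|$ for every $t$, where $2^{-m}=\delta$.

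Next I would invoke Theorem~\ref{thm:main-discrete}. Let $\widetilde A$ be the union of the $2^{-m}$-dyadic cubes meeting $A$ (so $\cN(\widetilde A,m)\approx|A|$ and $|\widetilde A|\approx|A|\delta^2$), take $\widetilde A$ as the set ``$X$'' of that theorem, let $U\supset\widetilde A$ be a ball, let $(\Lambda,\nu)$ be $X$ with its normalized counting measure, and let the parameter be $\lambda=t$. Then $F_t'(a)=\partial G/\partial a(a,t)$, so $c_\Lambda\ge c$ and $C_\Lambda\le c^{-1}$, and $\theta_a(t)=\dir(\partial G/\partial a(a,t))\in S^1$. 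The single-scale non-concentration hypothesis~\eqref{eq:hyp-non-concentration} for $\widetilde A$ at scale $|\widetilde A|^{1/2}\approx\delta|A|^{1/2}$ is precisely the assumed bound $|A\cap B(a,\delta|A|^{1/2})|\le\delta^\kappa|A|$. For~\eqref{eq:hyp-circle-non-concentration} I would take $\Lambda_a=\Lambda=X$ for every $a$: the point of the non-degeneracy hypothesis~\eqref{eq:non-triviality-assumption} is that $t\mapsto\theta_a(t)$ has continuous, nowhere-vanishing, hence constant-sign angular speed lying in $[c,O(c^{-2})]$, so $\theta_a$ is strictly monotone on $[-1,1]$, at most $O_c(1)$-to-one, and locally bi-Lipschitz with uniform constants; consequently the Frostman estimate for $X$ pushes forward, $\theta_a\nu_X(H^{(r)}\cap S^1)\lesssim_c\delta^{-\e}r^\kappa\le 2^{2\e m}r^\kappa$ for all lines $H$ through the origin and $r\in[\delta,1]$. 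Thus the hypotheses of Theorem~\ref{thm:main-discrete} hold (with $2\e$ for its $\e$), and since $\Lambda_a\equiv\Lambda$ we are in its last case: there is $X_{\mathrm{bad}}\subset X$ with $|X_{\mathrm{bad}}|\le 2^{-\e m}|X|$ such that for all $t\in X\setminus X_{\mathrm{bad}}$ and all dyadic $A'\subset\widetilde A$ with $|A'|\ge 2^{-\e m}|\widetilde A|$ one has $\cN(F_t A',m)\ge 2^{\eta m}\cN(\widetilde A,m)^{1/2}\approx 2^{\eta m}|A|^{1/2}$, where $\eta=\eta_2(\kappa)$.

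To finish, I would estimate $\sum_{t\in X}|A_t|$ by splitting $X$ into three classes. The cubes of $X_{\mathrm{bad}}$ contribute at most $2^{-\e m}|X||A|$. Call $t\notin X_{\mathrm{bad}}$ \emph{heavy} if $|A_t|\ge 2^{-\e m}|A|$; for such $t$ the robust expansion gives $2^{\eta m}|A|^{1/2}\lesssim\cN(F_t A_t,m)\lesssim_c|E_t|$, so $|E_t|\gtrsim_c 2^{\eta m}|A|^{1/2}$, and since $\sum_t|E_t|=|E|\le\delta^{-\e}|X||A|^{1/2}$ by~\eqref{eq:size-assumption-E}, there are $\lesssim_c 2^{(\e-\eta)m}|X|$ heavy $t$, contributing $\lesssim_c 2^{(\e-\eta)m}|X||A|$. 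The remaining (\emph{light}) $t$ satisfy $|A_t|<2^{-\e m}|A|$ and contribute at most $2^{-\e m}|X||A|$. Altogether $|\mathcal I(E,A)|\lesssim_c(2^{-\e m}+2^{(\e-\eta)m})|X||A|\le\delta^{\e/2}|X||A|$ as soon as $\e<\eta/4$ and $\delta$ is small in terms of $\kappa$ and $c$; relabelling $\e$ gives the statement, with $\e$ depending only on $\eta_2(\kappa)$, i.e.\ only on $\kappa$.

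The main obstacle is the verification of~\eqref{eq:hyp-circle-non-concentration} with $\Lambda_a\equiv\Lambda$: one must upgrade the pointwise derivative bound~\eqref{eq:non-triviality-assumption} to a uniform-in-$a$ power-law non-concentration estimate for the spherical image $\theta_a\nu_X$, and it is essential that this works for the whole family, so that the stronger ``all but a $2^{-\e m}$-fraction of the parameters'' conclusion of Theorem~\ref{thm:main-discrete} is available---the weaker ``one good parameter'' form would give no control over a positive proportion of the fibres $E_t$ and hence no incidence bound. Assumption~\eqref{eq:non-triviality-assumption} is exactly what excludes the train-track configurations described before the theorem. The remaining points---matching the discrete scale $\delta=2^{-m}$ to the Lebesgue normalization used in Theorem~\ref{thm:main-discrete}, handling the dyadic thickening of $A$, and absorbing the $C^2$-dependent constants---are routine bookkeeping.
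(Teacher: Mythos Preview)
Your proposal is correct and follows essentially the same approach as the paper: apply Theorem~\ref{thm:main-discrete} to the family $a\mapsto G(a,t)$ with the (thickened) set $A$ and parameter space $X$, verify the spherical non-concentration~\eqref{eq:hyp-circle-non-concentration} via the bi-Lipschitz property of $t\mapsto\theta_a(t)$ granted by~\eqref{eq:non-triviality-assumption}, and convert the robust expansion $\cN(F_t A_t,m)\gtrsim 2^{\eta m}|A|^{1/2}$ into a lower bound on $|E_t|$ that contradicts~\eqref{eq:size-assumption-E}. The only cosmetic differences are that the paper argues by contradiction and first pigeonholes to a subset $X_0\subset X$ of ``heavy'' parameters before applying Theorem~\ref{thm:main-discrete}, whereas you run a direct argument on all of $X$ and split into bad/heavy/light afterward; both routes use the $\Lambda_a\equiv\Lambda$ case of Theorem~\ref{thm:main-discrete} in the same essential way.
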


Before presenting the deduction of this theorem from Theorem \ref{thm:main-discrete},  we make some remarks and then give some examples.

\begin{remark}
The assumption \eqref{eq:size-assumption-E} rules out train track examples. Consider again lines $y=ax+b$ parametrized by $(a,b)$. Let $X\subset [-1,1]$ be a well separated set with $|X|=\delta^{-t}$, let $E=X\times \{j\delta\}_{j=0}^{\delta^{-t}}$, and let $A_0=\{j\delta\}_{j=0}^{\delta^{-t}}\times \{j\delta\}_{j=0}^{\delta^{-t}}$. Then $|E|\approx |X||A_0|^{1/2}$ and $|\cI(E,A_0)|\approx |X||A_0|$. Of course, in this case $A_0$ is maximally concentrated. But if we allowed $|E|$ to be much larger than $|X||A|^{1/2}$, we could take $A$ to be large non-concentrated subset of $A_0$. Then all the hypotheses of the theorem would hold, except for \eqref{eq:size-assumption-E}, and the conclusion would clearly fail. There is nothing special about lines here - for more general graphs, one has to consider ``curved train tracks'' of the form $C^{(\delta^{s})}\cap (X\times\R)$ for a fixed graph $C$ in the family.
\end{remark}

%\begin{remark}
%The most interesting case is that in which $|E|\approx |X|^2\approx |A|$. Increasing the size of $E$ within the bound \eqref{eq:size-assumption-E} only makes the statement stronger.
%\end{remark}

\begin{remark}
In the case of lines, Theorem \ref{thm:incidences} follows from Bourgain's discretized projection theorem, except that our non-concentration requirement on the lines is weaker (and in some sense the weakest possible one). We note that (for lines), M.~Bateman and V.~Lie \cite{BatemanLie19} have previously obtained a related result, but under different assumptions. Roughly speaking, they assume more on the family of lines, and on the structure of the set $E$, but on the other hand, they have a single-scale non-concentration hypothesis on $X$ (while we have a single-scale non-concentration hypothesis on the set $A$ of lines). Recently, L.~Guth, N.~Solomon and H.~Wang \cite{GSW19} studied incidences between tubes satisfying the strongest possible non-concentration assumption, and obtained sharp incidence bounds in that regime.
\end{remark}

\begin{remark}
We have tried to give an indication of the relationship between our results and incidence counting, but Theorem \ref{thm:incidences} is not the strongest or most general formulation possible. The hypothesis \eqref{eq:non-triviality-assumption} can be weakened, although of course some non-degeneracy condition on the family of curves is needed. Concretely, the number on the left-hand side of \eqref{eq:non-triviality-assumption} can be allowed to be zero on a set small enough that, if we remove a suitable neighborhood of it, the incidence counting does  not change. Using the results in \S\ref{subsec:higher-rank} below, one can also deduce versions in higher dimensions, and for higher-dimensional families of curves, although the required assumptions become more restrictive and cumbersome to verify.
\end{remark}

We now give some examples of families of curves to which Theorem \ref{thm:incidences} applies. The verifications are straightforward calculations.
\begin{corollary}
Let $\kappa,\e, E, A$ and $X$ be as in Theorem \ref{thm:incidences}. For each of the following families of curves, provided $\delta$ is small enough in terms of $\kappa$ and the family, we have
\[
|\mathcal{I}(E,A)| \le \delta^\e |X||A|.
\]
\begin{enumerate}[(i)]
\item The family of lines $y=ax+b$, with $(a,b)$ in some fixed bounded set.
\item The family of parabolas $y=ax^2+bx$, with $(a,b)$ in some fixed bounded set.
\item The family of circles with center $(a,0)$ and radius $r$, with $(a,r)$ in some bounded set.
\item The family of circles of unit radius and centre in a bounded set $S$, provided the projection of $S$ onto the $x$-axis is separated from $(X-1)\cup (X+1)$.
\end{enumerate}
\end{corollary}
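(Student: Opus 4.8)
The plan is to verify, for each of the four families separately, that it satisfies the hypotheses of Theorem~\ref{thm:incidences}, after which the bound $|\mathcal{I}(E,A)|\le\delta^\e|X||A|$ is immediate. The non-concentration hypotheses on $E$, $A$ and $X$ are assumed in the statement of the corollary, so the only task is to exhibit, for each family, a $C^2$ function $G(a,x)$ with $\mathrm{Graph}(G_a)=C_a$ (splitting a curve into finitely many graph pieces when necessary) and to check the three conditions: $\|G\|_{C^2}\lesssim 1$, $|\partial G/\partial a|\ge c$, and the non-degeneracy condition \eqref{eq:non-triviality-assumption}, all on the relevant compact range of the parameter $a$ and of $x\in[-1,1]$. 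In each case this is a direct computation; the one genuinely geometric point is the positivity of $\bigl|\tfrac{d}{dx}\dir(\partial G/\partial a)\bigr|$, which is the analytic incarnation of the family not admitting train-track configurations.

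For (i) take $a=(a_1,a_2)$ and $G(a,x)=a_1x+a_2$, so $\partial G/\partial a=(x,1)$, hence $|\partial G/\partial a|\ge1$, and $\dir(\partial G/\partial a)=(x,1)/\sqrt{x^2+1}$ has $x$-derivative with first coordinate $(x^2+1)^{-3/2}\ge 2^{-3/2}$ on $[-1,1]$; since $G$ is affine, all constants are absolute. The remaining families follow the same recipe. For (ii), $G(a,x)=a_1x^2+a_2x$ gives $\partial G/\partial a=(x^2,x)$. For (iii), parametrizing the upper or lower half of the circle of centre $(a_1,0)$ and radius $a_2$ by $G(a,x)=\pm\sqrt{a_2^2-(x-a_1)^2}$ gives $\partial G/\partial a=\pm(x-a_1,a_2)/\sqrt{a_2^2-(x-a_1)^2}$, hence $\dir(\partial G/\partial a)=\pm(x-a_1,a_2)/\sqrt{(x-a_1)^2+a_2^2}$ and $\bigl|\tfrac{d}{dx}\dir(\partial G/\partial a)\bigr|=a_2^2\bigl((x-a_1)^2+a_2^2\bigr)^{-3/2}$, which is bounded below once $a_2$ ranges over a compact subset of $(0,\infty)$ and $|x-a_1|$ stays bounded. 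For (iv), parametrizing the upper or lower half of the unit circle centred at $a=(a_1,a_2)$ by $G(a,x)=a_2\pm\sqrt{1-(x-a_1)^2}$ one finds the convenient identity $\dir(\partial G/\partial a)=\bigl(x-a_1,\pm\sqrt{1-(x-a_1)^2}\bigr)$ on the corresponding half, whose $x$-derivative has first coordinate identically $1$, while $|\partial G/\partial a|=(1-(x-a_1)^2)^{-1/2}\ge1$.

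I expect the only mildly delicate part to be the bookkeeping around two degeneracies, and in each case the hypotheses of the corollary are exactly what makes the argument close. First, circles are not globally graphs over the $x$-axis: one decomposes each circle into its two halves (at the cost of slightly decreasing $\e$, since this splits the incidence count into two pieces each controlled by Theorem~\ref{thm:incidences}) and must keep $x$ away from the points of vertical tangency, i.e. from $x=a_1\pm a_2$ in (iii) and $x=a_1\pm1$ in (iv); for (iv) this is precisely the content of the hypothesis that the $x$-projection of $S$ be separated from $(X-1)\cup(X+1)$, while for (iii) it follows from $a_2$ being bounded away from $0$ together with the observation that $|x-a_1|>a_2$ contributes no incidences and that the (lower order) contribution from configurations with $x$ near a vertical tangent is discarded. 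Second, in (ii) one has $\partial G/\partial a=(x^2,x)=0$ at $x=0$, with $\dir(\partial G/\partial a)$ discontinuous there — this reflects the genuine degeneracy that every parabola in the family passes through the origin — so one restricts $x$ (equivalently $X$) to a fixed compact subinterval of $[-1,1]\setminus\{0\}$, the neighbourhood of $0$ being negligible for the incidence count by the non-concentration of $X$ and the flexibility in \eqref{eq:non-triviality-assumption} noted in the remark following Theorem~\ref{thm:incidences}. On the remaining range every relevant quantity is bounded below by a constant depending only on $\kappa$ and the family, Theorem~\ref{thm:incidences} applies to each graph piece with $G$ extended smoothly outside the relevant $x$-range, and summing the resulting estimates over the bounded number of pieces yields $|\mathcal{I}(E,A)|\le\delta^\e|X||A|$.
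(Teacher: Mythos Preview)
Your approach is correct and matches the paper's, which simply states that ``the verifications are straightforward calculations''; you have in fact supplied considerably more detail than the paper does, including a careful treatment of the degeneracies in cases (ii)--(iv) that the paper leaves implicit. One minor slip: in (iii) the expression $a_2^2\bigl((x-a_1)^2+a_2^2\bigr)^{-3/2}$ is the first component of $\tfrac{d}{dx}\dir(\partial G/\partial a)$ rather than its magnitude (which is $a_2/((x-a_1)^2+a_2^2)$), but either quantity being bounded below suffices, so the argument stands.
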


\begin{proof}[Proof of Theorem \ref{thm:incidences}]
Suppose $\delta=2^{-m}$. Assume on the contrary that
\[
| \{(p,a)\in E\times A: d(p,\textup{Graph}(G_a))<2^{-m}\} | > 2^{-\e m} |X||A|.
\]
Let $E_x=\{ y: (x,y)\in E\}$. Then
\[
\sum_{x\in X} |\{a\in A: d(G_a(x),E_x)<2^{-m} \} | > 2^{-\e m}|X||A|.
\]
Hence, if we set $A_x=\{ a\in A:  d(G_a(x),E_x)<2^{-m}\}$, we have
\[
|\{ x\in X: |A_x|\ge \tfrac{1}{2} 2^{-\e m}|A|\} | \ge \tfrac{1}{2} 2^{-\e m}|X|.
\]
Let $X_0\subset X^{(2^{-m})}$ be the $2^{-m}$-neighborhood of the set appearing on the left-hand side, and write $\nu=\mathbf{1}_{X_0}/|X_0|$. Using $|X_0|\gtrsim 2^{-\e m}|X^{(2^{-m})}|$ and the non-concentration assumption on $X$, we see that
\[
\nu(B(x,r))\lesssim 2^{2\e m} r^\kappa, \quad r\in [2^{-m},1].
\]
Our goal is to apply Theorem \ref{thm:main-discrete} to the family $\{ a\mapsto G_a(x)\}$ and the set $A^{(2^{-m})}$. Note that $X$ plays the role of $\Lambda$.  Using \eqref{eq:non-triviality-assumption}, the same argument in the proof of Theorem \ref{thm:real-analytic} shows that
\[
\theta_a \nu(B(y,r)) \lesssim_c 2^{2\e m} r^\kappa,
\]
uniformly over $a\in A$. Using the assumptions, we see that if $\e$ is small enough in terms of $\kappa$, then the hypotheses of Theorem \ref{thm:main-discrete} are met (with $A^{(2^{-m})}$ in place of $X$). By definition of $X_0$, we have $|A_x|\gtrsim 2^{-\e m}|A|$ for $x\in X_0$ so, again assuming $\e$ is small enough only in terms of $\kappa$, Theorem \ref{thm:main-discrete} (applied to the $2^{-m}$-neighborhoods of $A, A_x$) guarantees that, provided $m$ is large enough in terms of $c$ and $\kappa$,
\[
|E_x| \gtrsim \cN(\{ G_a(x):a\in A_x\},m) \gtrsim   2^{\eta m}|A|^{1/2} \quad\text{for all }x\in X_0,
\]
where $\eta=\eta(\kappa)>0$ is the value provided by Theorem \ref{thm:main-discrete}. We conclude
\[
|E| \gtrsim 2^{\eta m}|X_0||A|^{1/2} \gtrsim 2^{(\eta-\e)m} |X||A|^{1/2}.
\]
This contradicts \eqref{eq:size-assumption-E} if $\e$ is small enough compared to $\eta$ (hence in terms of $\kappa$ only), finishing the proof.
\end{proof}

\subsection{A higher rank non-linear projection theorem}
\label{subsec:higher-rank}

W. He \cite[Theorem 1]{He20} extended Bourgain's discretized projection theorem to higher rank projections. Using his result, we can obtain a corresponding higher rank version of Theorems \ref{thm:main-continuous} and \ref{thm:main-discrete}. The proofs are nearly the same; we only need to take a little care in extending Theorem \ref{thm:projection-combined} properly.

Following \cite{He20}, given $V\in \mathbb{G}(d,k)$ and $W\in \mathbb{G}(d,d-k)$, we define $d(V,W)=\det(P_{W^{\perp}|V})$, where $P_{W^{\perp}|V}$ denotes the restriction of $P_{W^{\perp}}$ to $V$. See \cite[Eq. (13)]{He20} for this characterization of $d(V,W)$. We remark that $d$ is \emph{not} a metric; to begin with, $V$ and $W$ live in different spaces. Also, $d(V,W)=0$ if and only if $\dim(V\cap W)\ge 1$. However, $d$ is symmetric. For $W\in \mathbb{G}(d,d-k)$, we will denote
\[
\cV(W,r) = \{ V\in \mathbb{G}(d,k): d(V,W)<r\}.
\]
Given $W\in \mathbb{G}(d,d-k)$, the set $\{ V\in \mathbb{G}(d,k): d(V,W)=0\}$ is a smooth hypersurface in $\mathbb{G}(d,k)$. Hence, $\cV(W,r)$ can be seen as a neighborhood of this hypersurface.

Later we will need to deal with the case $k=d-1$, which is particularly simple. If $H\in \mathbb{G}(d,d-1)$, $\ell\in \mathbb{G}(d,1)$, then
\begin{equation} \label{eq:dist-hyperplane-line}
d(H,\ell) = |\det(P_{H^{\perp}|\ell})| = |\cos(\angle(H^{\perp},\ell))|.
\end{equation}

The following lemma will help us achieve the correct generalization of Theorem \ref{thm:projection-combined}.
\begin{lemma} \label{lem:linear-algebra}
Fix $1\le k\le d-1$.
\begin{enumerate}
\item[\textup{(i)}]
Given $x\in S^{d-1}\cap W$, with $W\in \mathbb{G}(d,d-k)$,
\[
d(W,V)\le \dist(x,V).
\]
\item[\textup{(ii)}]
Given $x\in S^{d-1}\cap W^\perp$ with $W\in \mathbb{G}(d,d-k)$,
\[
d(W,V)\le |P_V(x)|.
\]
\end{enumerate}
\end{lemma}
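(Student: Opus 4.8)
The plan is to reduce both inequalities to a single elementary fact about orthogonal projections: \emph{if $E,F\subset\R^d$ are linear subspaces of the same dimension $p$ and $x\in E$ is a unit vector, then $|\det(P_{F|E})|\le|P_F x|$.} To see that this suffices, note that from the definition $d(V,W)=|\det(P_{W^\perp|V})|$ and the stated symmetry of $d$ one gets, for $W\in\mathbb{G}(d,d-k)$ and $V\in\mathbb{G}(d,k)$, the two equivalent determinantal formulas $d(W,V)=|\det(P_{V^\perp|W})|$ and $d(W,V)=|\det(P_{V|W^\perp})|$; the latter holds because for the two $k$-dimensional subspaces $V$ and $W^\perp$ the restricted projections $P_{W^\perp|V}$ and $P_{V|W^\perp}$ are mutually adjoint, hence have determinants of equal absolute value. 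For part (i) I would apply the elementary fact with $E=W$, $F=V^\perp$ (both of dimension $d-k$) and the first formula: since $x\in W$ and $|P_{V^\perp}x|=\dist(x,V)$, this gives $d(W,V)\le\dist(x,V)$. For part (ii) I would apply it with $E=W^\perp$, $F=V$ (both of dimension $k$) and the second formula: since $x\in W^\perp$, this gives $d(W,V)\le|P_V x|$.

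To prove the elementary fact, complete $x$ to an orthonormal basis $x=e_1,e_2,\dots,e_p$ of $E$. Expressing $P_{F|E}$ in this basis together with any orthonormal basis of $F$, the absolute value of its determinant equals the norm of the decomposable $p$-vector $P_F e_1\wedge\cdots\wedge P_F e_p$ in $\bigwedge^p\R^d$. By Hadamard's inequality and the fact that orthogonal projections do not increase norms (so $|P_F e_i|\le|e_i|=1$),
\[
|\det(P_{F|E})| = \|P_F e_1\wedge\cdots\wedge P_F e_p\| \le \prod_{i=1}^p |P_F e_i| \le |P_F e_1| = |P_F x|.
\]

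I do not expect a genuine obstacle here; the whole content is the exterior-algebra description of the determinant of a projection between equidimensional subspaces, combined with Hadamard's inequality. The only points needing care are the dimension bookkeeping — in particular that the hypothesis $1\le k\le d-1$ is exactly what makes $S^{d-1}\cap W$ and $S^{d-1}\cap W^\perp$ nonempty, so neither statement is vacuous — the identity $|P_{V^\perp}x|=\dist(x,V)$ used in part (i), and the passage between the several equivalent descriptions of $d(W,V)$, each of which is either the definition, the symmetry of $d$, or the mutual adjointness of $P_{F|E}$ and $P_{E|F}$.
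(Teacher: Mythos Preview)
Your proof is correct and follows essentially the same approach as the paper: both extract the Hadamard-type bound $|\det(P_{F|E})|\le|P_F x|$ for a unit vector $x\in E$. The only cosmetic difference is that the paper proves part~(i) directly (row norms in the matrix of $P_{V^\perp|W}$) and then deduces (ii) from (i) via the duality $d(W,V)=d(V^\perp,W^\perp)$, whereas you isolate the elementary fact once and apply it symmetrically to both parts.
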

\begin{proof}
\begin{enumerate}
\item[\textup{(i)}] Note that $\dist(x,V)=|P_{V^\perp}(x)|$. Hence if we extend $x$ to an orthonormal basis of $W$, pick an orthonormal basis for $V^\perp$, and write the matrix of $P_{V^{\perp}|W}$ in these bases, the first row has norm $\dist(x,V)$, while all the rows have norm at most $1$. Hence
    \[
    d(W,V)=|\det(P_{V^{\perp}|W})|\le \dist(x,V).
    \]
\item[\textup{(ii)}] Since $|P_V(x)|=\dist(x,V^{\perp})$, the claim follows from the first one and the identity $d(W,V)=d(V^\perp,W^\perp)$, see \cite[Eq. (14)]{He20}.
\end{enumerate}
\end{proof}

\begin{corollary} \label{cor:rho-decay-comparison}
Fix $1\le k\le d-1$. Let $\rho\in\cP(\mathbb{G}(d,k))$ be a measure satisfying the decay condition
\begin{equation} \label{eq:rho-decay}
\rho(\cV(W,r)) \le C\, r^\kappa \quad\text{for all }0<r\le 1, W\in \mathbb{G}(d,d-k).
\end{equation}
Then for every $x\in \R^d\setminus\{0\}$,
\begin{align}
\rho\{V:\dist(x,V)\le r\} &\le C(r/|x|)^{\kappa}, \label{eq:rho-decay-1}\\
\rho\{V: |P_V(x)|\le r\} &\le C(r/|x|)^{\kappa}. \label{eq:rho-decay-2}
\end{align}
\end{corollary}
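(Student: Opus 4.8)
The plan is to read off both inequalities directly from Lemma~\ref{lem:linear-algebra} after normalizing the vector $x$. Set $e=x/|x|\in S^{d-1}$. Since orthogonal projection onto a linear subspace is linear, we have $\dist(x,V)=|x|\,\dist(e,V)$ and $|P_V(x)|=|x|\,|P_V(e)|$ for every $V\in\mathbb{G}(d,k)$. Hence, writing $s=r/|x|$, the estimates \eqref{eq:rho-decay-1} and \eqref{eq:rho-decay-2} are equivalent to the bounds $\rho\{V:\dist(e,V)\le s\}\le Cs^\kappa$ and $\rho\{V:|P_V(e)|\le s\}\le Cs^\kappa$. We may assume $s\le 1$, the case $s>1$ being trivial since $\rho$ is a probability measure (and we may assume $C\ge1$, or simply restrict attention to $r\le|x|$).

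For \eqref{eq:rho-decay-1}, the first step is to produce a subspace $W\in\mathbb{G}(d,d-k)$ with $e\in W$: this is possible because $d-k\ge 1$, so one extends $\{e\}$ to an orthonormal basis of such a $W$. Lemma~\ref{lem:linear-algebra}(i) then gives $d(W,V)\le\dist(e,V)$ for all $V$, so, using that $d$ is symmetric, $\{V:\dist(e,V)\le s\}\subset\{V:d(V,W)\le s\}$. For \eqref{eq:rho-decay-2}, one instead chooses $W\in\mathbb{G}(d,d-k)$ with $e\in W^\perp$, which is possible because $\dim W^\perp=k\ge 1$; Lemma~\ref{lem:linear-algebra}(ii) then yields $d(W,V)\le|P_V(e)|$, hence $\{V:|P_V(e)|\le s\}\subset\{V:d(V,W)\le s\}$.

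In both cases it remains to bound $\rho\{V:d(V,W)\le s\}$. Since $\cV(W,r)$ is defined by the \emph{strict} inequality $d(V,W)<r$, I would observe that $\{V:d(V,W)\le s\}\subset\cV(W,s')$ for every $s'$ with $s<s'\le 1$, apply the hypothesis \eqref{eq:rho-decay} to each such $\cV(W,s')$, and let $s'\downarrow s$ to conclude $\rho\{V:d(V,W)\le s\}\le Cs^\kappa$. Substituting $s=r/|x|$ finishes the proof. There is no genuine obstacle: the whole corollary is a bookkeeping consequence of Lemma~\ref{lem:linear-algebra}, and the only points to watch are the (inessential) strict-versus-nonstrict issue just described and choosing the auxiliary plane $W$ in the correct space ($e\in W$ for the distance estimate, $e\in W^\perp$ for the projection estimate), both of which use only $1\le k\le d-1$.
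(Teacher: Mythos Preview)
Your proof is correct and follows essentially the same approach as the paper: normalize $x$ to a unit vector, then for each inequality choose $W\in\mathbb{G}(d,d-k)$ with $x\in W$ (resp.\ $x\in W^{\perp}$) and invoke the corresponding part of Lemma~\ref{lem:linear-algebra}. Your treatment is in fact more careful than the paper's, which does not comment on the strict-versus-nonstrict inequality in the definition of $\cV(W,r)$.
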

\begin{proof}
By rescaling, we may assume $x$ has unit norm. Choosing $W$ such that $x\in W$ (resp. $x\in W^{\perp}$), the claim is immediate from Lemma \ref{lem:linear-algebra}.
\end{proof}

The non-concentration condition \eqref{eq:rho-decay} is the one appearing in He's projection theorem, \cite[Theorem 1]{He20}. On the other hand, \eqref{eq:rho-decay-1} is the required decay in order to get the higher rank version of Theorem \ref{thm:projection-Kaufman} and \eqref{eq:rho-decay-2} is the decay needed to obtain the higher rank version of Theorem \ref{thm:projection-Falconer}. The proofs are very similar to the rank $1$ case, see \cite[\S 5.3]{Mattila15} for details. Note that even though in \cite[\S 5.3]{Mattila15} there is a Frostman condition (on balls) for $\rho$, the exponents are chosen large enough so that \eqref{eq:rho-decay-1}--\eqref{eq:rho-decay-2} hold; see in particular \cite[Eqs. (5.11) and (5.12)]{Mattila15}. Thus Corollary \ref{cor:rho-decay-comparison} shows that \eqref{eq:rho-decay} is enough to obtain the decay required in all three projection regimes. As a consequence, the same proof of Theorem \ref{thm:projection-combined} yields:

\begin{theorem} \label{thm:projection-combined-high-rank}
Fix $1\le k\le d-1$. Given $0<\kappa<1$ there exists $\eta=\eta(\kappa)>0$ such that the following holds for all sufficiently small $\delta\le \delta_0(\kappa)$ and all sufficiently large $m\ge m_0(\delta)$. Fix $\alpha\in [0,d]$.  Let $\mu\in\cP_m^d$, and let $\rho\in\cP(\mathbb{G}(d,k))$ satisfy
\begin{align*}
\mu(B(x,r)) &\le 2^{\delta m} r^\alpha \quad\text{for all } r\in [2^{-m},1], x\in [0,1)^d,\\
\rho(\cV(W,r)) &\le 2^{\delta m} r^\kappa \quad\text{for all } r\in [2^{-m},1], W\in \mathbb{G}(d,d-k).
\end{align*}

Then there is a set $E\subset \mathbb{G}(d,k)$ with $\rho(E)\le 2^{-\delta m}$ such that $P_V\mu$ is $(\gamma(\alpha),\delta,m)$-robust for all $V\in \mathbb{G}(d,k)\setminus E$, where
\[
\gamma(\alpha) = \left\{
\begin{array}{ccc}
  \alpha-6\delta & \text{ if } & \alpha < \kappa/2 \\
  \frac{k}{d}\alpha+\eta & \text{ if } & \kappa/2 \le \alpha \le d-\kappa/2 \\
  k-6\delta & \text{ if } &  \alpha > d-\kappa/2
\end{array}
\right..
\]
\end{theorem}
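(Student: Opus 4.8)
The plan is to rerun the proof of Theorem \ref{thm:projection-combined} line by line, replacing each rank-$1$ input by its rank-$k$ counterpart, so I will only highlight the three places where something actually changes. Throughout, $d(V,W)$ is not a metric and $\cV(W,r)$ is a neighborhood of a hypersurface rather than a ball, but this never causes trouble because Lemma \ref{lem:linear-algebra} and Corollary \ref{cor:rho-decay-comparison} let us deduce from the single hypothesis $\rho(\cV(W,r)) \le 2^{\delta m}r^\kappa$ all of the pointwise decay statements we will need.

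First I would treat the middle regime $\kappa/2 \le \alpha \le d-\kappa/2$, where Bourgain's Theorem \ref{thm:projection-Bourgain} is replaced by W.~He's discretized projection theorem \cite[Theorem 1]{He18}. Its non-concentration hypothesis on the measure $\rho\in\cP(\mathbb{G}(d,k))$ is exactly the assumed decay $\rho(\cV(W,r)) \le 2^{\delta m}r^\kappa$, and its conclusion for a $k$-plane $V$ is $\log\cN(P_V X',m) \ge m(\tfrac{k}{d}\alpha+\eta)$ — the factor $k/d$ being precisely why the middle branch of $\gamma$ reads $\tfrac{k}{d}\alpha+\eta$ and not $\tfrac{\alpha}{d}+\eta$. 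As before one may assume $\eta,\delta$ work uniformly for $\alpha$ in the compact range $[\kappa/2,d-\kappa/2]$ and that the conclusion holds with $4\delta$ in place of $\delta$. Then the level-set decomposition $\mu = \mu(Z)\mu_Z + \sum_{j\in J}\mu(X_j)\mu_{X_j}$, with $X_j$ the union of $Q\in\cD_m$ with $\mu(Q)\in(2^{-j-1},2^{-j}]$ and $J = \{\, j : \mu(X_j)\ge 2^{-2\delta m}\,\}$, goes through verbatim: ball non-concentration of $\mu$ transfers to each $\mathbf{1}_{X_j}/|X_j|$ with $3\delta$ in place of $\delta$, He's theorem produces exceptional sets $E_j$ with $\rho(E_j)\le 2^{-4\delta m}$, and for $V\notin E := \bigcup_{j\in J}E_j$ (so $\rho(E)\ll 2^{-\delta m}$) any $A$ with $P_V\mu(A)\ge 2^{-\delta m}$ satisfies $|X_j\cap P_V^{-1}A|\gtrsim 2^{-\delta m}|X_j|$ for some $j\in J$, whence $\log\cN(A,m)\ge (\tfrac{k}{d}\alpha+\eta)m$ and $P_V\mu$ is $(\tfrac{k}{d}\alpha+\eta,\delta,m)$-robust.

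For the two outer regimes I would invoke the rank-$k$ versions of Theorems \ref{thm:projection-Kaufman} and \ref{thm:projection-Falconer}, proved exactly as in \cite[\S 5.3]{Mattila15}; the decay hypotheses they demand of $\rho$ are the pointwise statements \eqref{eq:rho-decay-1} and \eqref{eq:rho-decay-2}, both of which Corollary \ref{cor:rho-decay-comparison} supplies from \eqref{eq:rho-decay}. With those in hand the energy computations are unchanged: ball non-concentration of $\mu$ gives $\cE_\alpha(\mu)\lesssim m 2^{\delta m}$ when $\alpha<\kappa/2$ and $\cE_{d-\kappa}(\mu)\lesssim_\kappa 2^{\delta m}$ when $\alpha>d-\kappa/2$; then averaging, Chebyshev, normalization of $(P_V\mu)_A$, and the two standard Cauchy--Schwarz inequalities $\cE_\alpha(\nu)\gtrsim_\alpha 2^{\alpha m}\cN(A,m)^{-1}$ and $\cN(A,m)\ge 2^m\|\nu\|_2^{-2}$ yield $(\alpha-6\delta,\delta,m)$-robustness and $(k-6\delta,\delta,m)$-robustness respectively. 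The only cosmetic difference from Theorem \ref{thm:projection-combined} is in the $L^2$ case: the target is now $k$-dimensional, so $\cN(A,m)\ge 2^m\|\nu\|_2^{-2}$ is capped at $2^{km}$ and the correct exponent is $k-6\delta$, matching $1-6\delta$ when $k=1$.

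\textbf{The main obstacle.} I do not expect a genuine difficulty, since the whole role of Lemma \ref{lem:linear-algebra} and Corollary \ref{cor:rho-decay-comparison} is to reconcile the superficially different non-concentration conditions demanded by He's theorem and by the Kaufman/Falconer estimates; once that bookkeeping is done, no estimate beyond those in the proof of Theorem \ref{thm:projection-combined} is needed. The one point deserving a second look is checking that the discretized quantifier ``for all $r\in[2^{-m},1]$'' in the hypotheses on $\mu$ and $\rho$ is compatible with the continuous ``for all $0<r\le 1$'' in \eqref{eq:rho-decay} used by Corollary \ref{cor:rho-decay-comparison} — which it is, since the discretized condition is the weaker one and the energy integrals are in any case dominated by the dyadic scales $r\in[2^{-m},1]$.
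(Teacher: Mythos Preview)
Your proposal is correct and follows exactly the approach the paper takes: the paper simply remarks that Corollary \ref{cor:rho-decay-comparison} supplies the decay \eqref{eq:rho-decay-1}--\eqref{eq:rho-decay-2} needed for the rank-$k$ Kaufman and Falconer estimates from \cite[\S 5.3]{Mattila15}, that \cite[Theorem 1]{He18} replaces Theorem \ref{thm:projection-Bourgain} in the middle regime, and that the proof of Theorem \ref{thm:projection-combined} then carries over verbatim. The only quibble is the phrasing in your $L^2$ case: the Cauchy--Schwarz bound in $\R^k$ reads $\cN(A,m)\ge 2^{km}\|\nu\|_2^{-2}$ directly (not ``$2^m$ capped at $2^{km}$''), which is what gives the exponent $k-6\delta$.
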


In turn, the proofs of Theorems \ref{thm:main-continuous} and \ref{thm:main-discrete} carry over to the higher rank setting with only notation changes. Of course the decay assumption on $\rho$ must be of the form \eqref{eq:rho-decay}, and the dimension/exponent of the ``good'' projections becomes $\tfrac{k}{d}\alpha+\eta$. For example, the following is the higher rank analog of Theorem \ref{thm:main-continuous}.

\begin{theorem} \label{thm:main-continuous-higher-rank}
Fix $1\le k\le d-1$. Given $\kappa>0, 0<\alpha<d$ there is $\eta=\eta_{d,k}(\kappa,\alpha)>0$ (that can be taken continuous in $\kappa,\alpha$) such that the following holds.

Let $(\Lambda,\nu)$ be a Borel probability space and let $U\subset\R^d$ be an open domain. Let $F:\Lambda\times U\to \R^k$ be a Borel map such that $F_\lambda(x)=F(\lambda,x)$ is $C^2$ for all $\lambda\in\Lambda$, and $DF_\lambda(x)$ has full rank $k$ for all $(\lambda,x)\in \Lambda\times U$. For each $x\in U$  define the map
\[
V_x(\lambda) =  \ker(DF_\lambda(x))^{\perp} \in \mathbb{G}(d,k).
\]
Let $A\subset U$ be a Borel set of Hausdorff dimension $\ge \alpha$ such that for all $x\in A$ there are a set $\Lambda_x$ with $\nu(\Lambda_x)>0$ and a number $C_x>0$ satisfying
\begin{equation} \label{eq:main-thm-cont-projective-decay-higher-rank}
V_x\nu_{\Lambda_x}\left(\cV(W,r)\right)   \le C_x \, r^\kappa \quad\text{for all }W\in \mathbb{G}(d,d-k), r\in (0,1].
\end{equation}
Then there is $\lambda\in\supp(\nu)$ such that
\[
\hdim(F_\lambda A) \ge \frac{k}{d}\,\alpha+\eta.
\]
\end{theorem}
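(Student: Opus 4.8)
The plan is to mirror the proof of Theorem~\ref{thm:main-continuous} essentially line by line, replacing orthogonal projections onto lines by the projections $P_V$, $V\in\mathbb{G}(d,k)$, the maps $\theta_x$ by the maps $V_x(\lambda)=\ker(DF_\lambda(x))^{\perp}$, and the single‑scale estimate Theorem~\ref{thm:projection-combined} by its higher rank counterpart Theorem~\ref{thm:projection-combined-high-rank}. As there, I would first reduce, via Lemma~\ref{lem:entropy-to-Hausdorff-dim} applied to the push‑forward $F_\lambda\mu_Z$, to the following single‑scale assertion: there is $\eta'=\eta'_{d,k}(\kappa,\alpha)>0$ such that, for every sufficiently large $m$ and every Borel set $Y\subset Z$ carrying at least $m^{-2}$ of the mass of $\mu_Z$, there is a probability measure $\wt\nu$ supported on $Y$ with $H_m(F_\lambda\wt\nu)\ge m(\tfrac{k}{d}\alpha+\eta')$. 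Here $\mu$ is an $\alpha$‑Frostman measure on $A$ (which we may assume exists, using the continuity of $\eta'$ in $\alpha$ exactly as in the proof of Theorem~\ref{thm:main-continuous}), and $Z$, of $\mu$‑measure $\ge c/2$, is produced by the same Fubini argument as there: after passing to a subset of $A$ we assume $\nu(\Lambda_x)\ge c$ and $C_x\le C$ uniformly, so that $\rho_x:=V_x\nu_{\Lambda_x}$ satisfies $\rho_x(\cV(W,r))\le C r^\kappa$ for all $W\in\mathbb{G}(d,d-k)$ and $r\in(0,1]$, and this decay survives, with only a $2^{\e m/\tau}$‑type loss, at every scale $m_j\ge\tau m$ appearing below.

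With the scale $m=T\ell$ fixed, I would decompose $\mu$ into regular pieces by Lemma~\ref{lem:decomposition-regular} and, to each regular piece carrying polynomially large mass, apply Theorem~\ref{thm:multiscale-decomposition} — or its Ahlfors variant Theorem~\ref{thm:multiscale-decomposition-Ahlfors} when the exponent $\beta(\sigma)$ of the piece is too close to $d$, the two regimes being separated by a threshold $\ol\alpha=\ol\alpha(\alpha,d)\in(\alpha,d)$ exactly as in the proof of Theorem~\ref{thm:main-continuous}. This yields scales $[TA_j,TB_j)$ and exponents $\alpha_j\in[0,d]$, and at each such scale one applies Theorem~\ref{thm:projection-combined-high-rank} to the conditional measures $\mu^Q$, $Q\in\cD_{TA_j}$, and to $\rho_x$, producing exceptional sets $E(x,j)\subset\mathbb{G}(d,k)$ of $\rho_x$‑measure $\le 2^{-\zeta m_j}$ off which $P_{V_x(\lambda)}\mu^Q$ is $(\gamma(\alpha_j),\zeta,m_j)$‑robust. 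The only properties of $\gamma$ used are that $\gamma(a)\ge\tfrac{k}{d}a-6\zeta$ for all $a\in[0,d]$ — which holds in each of the three cases of Theorem~\ref{thm:projection-combined-high-rank} since $k\le d$ and $a\le d$ — and $\gamma(a)\ge\tfrac{k}{d}a+\eta$ for $a\in[\xi,d-\xi]$. A final Fubini step over $\lambda$ then selects $\lambda\in\supp(\nu)$ and the set $Z$ so that $V_x(\lambda)\notin E(x,j)$ for every $x\in Z$ and every $j$.

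It remains to feed these single‑scale statements into the entropy‑of‑projections machinery of Appendix~\ref{sec:entropy-of-projections}, and this is the only place needing a genuine (though routine) extra remark: Propositions~\ref{prop:entropy-of-image-measure} and \ref{prop:good-bound-from-below} are phrased for scalar‑valued maps, but their proofs apply verbatim to $C^2$ maps into $\R^k$ once one observes that on a cube $Q\in\cD_{TA_j}$ the map $F_\lambda$ agrees, up to an error negligible at the finest relevant resolution $2^{-TB_j}$ (here one uses $B_j\le 2A_j$ from Theorem~\ref{thm:multiscale-decomposition}), with the composition of $P_{V_x(\lambda)}$ with an affine isomorphism onto $\R^k$ whose linear part absorbs $DF_\lambda(x)|_{V_x(\lambda)}$; this is Taylor's theorem together with the identity $DF_\lambda(x)=\bigl(DF_\lambda(x)|_{V_x(\lambda)}\bigr)\circ P_{V_x(\lambda)}$, valid because $V_x(\lambda)=\ker(DF_\lambda(x))^{\perp}$, the relevant linear map being invertible with norm and inverse‑norm controlled by $\|F_\lambda\|_{C^2}$ and the uniform rank hypothesis. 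Since $m$‑scale entropy changes by at most a bounded additive amount per scale under bi‑Lipschitz maps, Proposition~\ref{prop:good-bound-from-below} gives
\[
H_m(F_\lambda\wt\nu)\ \ge\ -O_{\cF}(1/\tau)\ +\ \sum_j\ \sum_{Q\in\cD_{TA_j}:\,Q\cap Y\neq\varnothing}\wt\nu(Q)\,H^{\Delta}_{m_j}\!\left(P_{V_{x_Q}(\lambda)}\mu^Q\right),
\]
with $\Delta$ a fixed polynomial‑in‑$m$ factor, and the robustness above together with Lemma~\ref{lem:robust-to-entropy} bounds each inner term below by $(\gamma(\alpha_j)-\e)m_j$.

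Summing, and inserting $\sum_j\alpha_j m_j\ge(\beta(\sigma)-\e)m$ and $\sum\{m_j:\alpha_j\in[\xi,d-\xi]\}\ge\xi m$ from Theorem~\ref{thm:multiscale-decomposition} together with $\beta(\sigma)\ge\alpha-O(\e)$ (because $\mu$ is $\alpha$‑Frostman and each regular piece carries polynomially large mass), yields $H_m(F_\lambda\wt\nu)\ge m(\tfrac{k}{d}\alpha+\eta')$; in the too‑uniform regime one instead obtains $H_m(F_\lambda\wt\nu)\ge m(\tfrac{k}{d}\ol\alpha-\text{error})$, which still exceeds $m(\tfrac{k}{d}\alpha+\eta')$ because $\ol\alpha>\alpha$ and, in the extreme $\beta(\sigma)\approx d$, because $k>\tfrac{k}{d}\alpha$ thanks to the hypothesis $\alpha<d$. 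This establishes the single‑scale assertion and hence the theorem. The main obstacle I expect is purely organizational: confirming that each rank‑one ingredient of \S\ref{sec:proofs} and of Appendix~\ref{sec:entropy-of-projections} has the rank‑$k$ analog just indicated, with the exponent $\tfrac1d$ systematically replaced by $\tfrac kd$. There is no new analytic content, since the non‑concentration hypothesis~\eqref{eq:main-thm-cont-projective-decay-higher-rank} is exactly the decay condition appearing in He's theorem and hence in Theorem~\ref{thm:projection-combined-high-rank}, and it transfers to the conditional measures by the reasoning behind Corollary~\ref{cor:rho-decay-comparison}.
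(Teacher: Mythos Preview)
Your proposal is correct and follows essentially the same approach as the paper, which simply observes that the proof of Theorem~\ref{thm:main-continuous} carries over with only notational changes once Theorem~\ref{thm:projection-combined} is replaced by Theorem~\ref{thm:projection-combined-high-rank}. One small correction: Propositions~\ref{prop:entropy-of-image-measure} and~\ref{prop:good-bound-from-below} are in fact already stated for $C^2$ maps $F:U\to\R^k$ with general $1\le k<d$ (note the opening line ``Fix $1\le k<d$'' and the use of $V(x)=\ker(DF(x))^\perp$), so the extension you describe as needing a ``genuine (though routine) extra remark'' is already present in the appendix and requires no additional justification.
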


\subsection{Dimension of spherical projections}
\label{subsec:spherical}

It is well known that if $A\subset\R^d$ is a Borel set of Hausdorff dimension $>d-1$, then the set of directions spanned by $A$ has full measure in $S^{d-1}$ (this follows e.g. from the Marstrand-Mattila Intersection Theorem). This clearly fails if $\hdim(A)=d-1$, as $A$ can then be contained in a hyperplane. But what if $A$ is not contained in a hyperplane? One might conjecture that then the set of directions spanned by $A$ has full dimension $d-1$, and maybe even there is $x\in A$ such that the spherical projection $e_x(A)$ has full dimension (Recall that $e_x(y)=x-y/|x-y|$). This is wide open in all dimensions $d\ge 2$, but Orponen \cite[Theorem 1.5]{Orponen19} proved the following partial result.

\begin{theorem} \label{thm:orponen-radial}
Let $A,E\subset\R^2$ be Borel sets such that $\hdim(E)>0$ and $E$ is not contained in a line. Then there is $y\in E$ such that
\[
\hdim(e_y(A\setminus\{y\})) \ge \frac{\hdim(A)}{2}.
\]
In particular, if $A\subset\R^2$ is not contained in a line, it spans a set of directions of dimension $\ge \hdim(A)/2$.
\end{theorem}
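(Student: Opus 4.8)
The plan is to reduce the statement to a non-concentration estimate for radial projections and then feed that estimate into Theorem~\ref{thm:main-continuous}. Write $s=\hdim(A)$; we may assume $s>0$, and after a routine reduction we may take $A$ and $E$ compact and, for simplicity, separated (the general case is handled by the usual localization, keeping track of the removed point). Fix $\sigma\in[s/2,s)$ and let $\mu$ be a Frostman measure on $A$ of exponent $\sigma$. First I would dispose of a degenerate case: if $\mu(\ell)>0$ for some line $\ell$, then $\mu|_\ell$ is a $\sigma$-Frostman measure on $\ell$, and since $E$ is not contained in $\ell$ we may choose $y\in E\setminus\ell$; the radial projection $e_y$ restricted to $\ell$ is a diffeomorphism onto an arc of $S^1$, hence bi-Lipschitz on $\supp(\mu|_\ell)$, so $\hdim(e_y(A\setminus\{y\}))\ge\hdim(\supp(\mu|_\ell))\ge\sigma\ge s/2$ and we are done. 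Hence we may assume $\mu(\ell)=0$ for every line $\ell$; in particular every pushforward $e_y\mu$ is non-atomic, since a ray through $y$ lies on a line.

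The crux is the following non-concentration statement, which goes back to Orponen~\cite{Orponen19}: for $\mu$ as above there is a single line $\ell_*=\ell_*(\mu)$ such that for every $y\notin\ell_*$ one has $\cE_{\sigma/2}(e_y\mu)<\infty$, and in fact a fixed portion of $e_y\mu$ satisfies a Frostman bound of exponent $\kappa=\kappa(\sigma)>0$ on arcs of $S^1$. I would prove this by a planar energy computation based on the elementary identity
\[
|e_y(x)-e_y(x')|\ \ge\ \frac{|x-x'|\,\dist(y,\ell_{x,x'})}{|x-y|\,|x'-y|}\ \gtrsim_{A,E}\ |x-x'|\,\dist(y,\ell_{x,x'}),
\]
where $\ell_{x,x'}$ is the line through $x$ and $x'$ and the last bound uses the separation of $A$ and $E$: this reduces $\cE_{\sigma/2}(e_y\mu)$ to $\iint|x-x'|^{-\sigma/2}\dist(y,\ell_{x,x'})^{-\sigma/2}\,d\mu(x)\,d\mu(x')$, and the only way this can fail to be finite for $y$ ranging over a set not contained in a line is that the pairs $(x,x')$ concentrate near a fixed line — impossible, because $\mu$ charges no line and $\cE_\sigma(\mu)<\infty$, and because no line is close to three non-collinear points. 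Since a quantitative version of exactly this analysis (and its higher-dimensional analogue) is carried out in Appendix~\ref{sec:spherical-projections} following Orponen, I would invoke it rather than redo it.

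Granting this, the conclusion is immediate. Most directly, since $E$ is not contained in a line, $E\not\subset\ell_*$, so there is $y\in E\setminus\ell_*$, and $\cE_{\sigma/2}(e_y\mu)<\infty$ forces $\hdim(e_y(A\setminus\{y\}))\ge\hdim(\supp(e_y\mu))\ge\sigma/2$; taking $\sigma\uparrow s$ (with the exceptional line uniform in $\sigma$) yields a single $y\in E$ with $\hdim(e_y(A\setminus\{y\}))\ge s/2$. Alternatively — and more in the spirit of this paper — one applies Theorem~\ref{thm:main-continuous} with $d=2$, $\Lambda=E$ carrying a Frostman measure $\nu$ supported off $\ell_*$ (available because $\hdim(E)>0$ and $E\not\subset\ell_*$), and $F_y(x)=e_y(x)$ read in local charts of $S^1$, so that the maps $\theta_x$ of that theorem are, up to a fixed rotation of $S^1$, the spherical projections $e_x$; the non-concentration bound above is precisely hypothesis~\eqref{eq:main-thm-cont-projective-decay} (a neighbourhood of a hyperplane $H\in\mathbb{G}(2,1)$ meets $S^1$ in arcs), and the conclusion produces $y\in\supp\nu\subset E$ with $\hdim(e_y(A\setminus\{y\}))\ge\sigma/2+\eta$, which is $\ge s/2$ and in fact strictly better.

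The main obstacle is the non-concentration step. The hypothesis ``$E$ is not contained in a line'' is purely qualitative, so one cannot assume any quantitative non-concentration near lines of a measure living on $E$; instead one must locate a single exceptional line intrinsic to $\mu$ and show that every degenerate parameter $y$ lies on it. This is where planarity (no line passes near three non-collinear points) and the reduction $\mu(\ell)=0$ enter essentially, and making the estimate robust and uniform enough to supply the arc-Frostman input required by Theorem~\ref{thm:main-continuous} is the delicate part.
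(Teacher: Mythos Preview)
The paper does not prove Theorem~\ref{thm:orponen-radial} itself; it is quoted as Orponen's result, and the paper's own contribution is the strictly stronger Theorem~\ref{thm:radial-projections-dim}, whose planar case implies it. So the relevant comparison is with the proof of Theorem~\ref{thm:radial-projections-dim}.

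Your proposal has a genuine gap, and it occurs exactly where you flag the ``main obstacle''. The assertion that there is a \emph{single} line $\ell_*=\ell_*(\mu)$ such that $\cE_{\sigma/2}(e_y\mu)<\infty$ for every $y\notin\ell_*$ is not what Orponen proves and not what Appendix~\ref{sec:spherical-projections} proves. Theorem~\ref{thm:radial-positive-dim} produces, for a $\mu$-large set of points $x$, an $x$-dependent good set $K(x)\subset\supp\nu$ with $\nu(K(x))\ge 1/2$ such that $e_x\nu_{K(x)}$ has power-law decay on arcs; there is no universal exceptional line, and your heuristic (``no line is close to three non-collinear points'') does not force the bad set of $y$'s to be collinear. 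The uniformity-in-$\sigma$ you invoke for $\ell_*$ is likewise unsupported.

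The second issue is a role reversal in your alternative route via Theorem~\ref{thm:main-continuous}. That theorem requires decay of $\theta_x\nu_{\Lambda_x}$, i.e.\ of the radial projections $e_x\nu$ of the \emph{parameter} measure $\nu$ on $E$, for $\mu$-typical $x\in A$. What you claim to have established is decay of $e_y\mu$ for $y\notin\ell_*$---the wrong object. The paper's proof of Theorem~\ref{thm:radial-projections-dim} keeps the roles straight: after the reduction that $\mu$ charges no line (which you do correctly), it splits according to whether $\nu$ charges a line. If $\nu(P)>0$ for some line $P$, one restricts $\nu$ to $P$, keeps $A$ off $P$, and uses that $e_x|_P$ is bi-Lipschitz to get the arc-Frostman bound on $e_x\nu$ directly; if $\nu$ charges no line, one applies Theorem~\ref{thm:radial-positive-dim} to obtain the decay of $e_x\nu_{K(x)}$. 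In either case one then takes $\Lambda_x=K(x)$ and feeds \eqref{eq:main-thm-cont-projective-decay} into Theorem~\ref{thm:main-continuous}. Your proposal has the right ingredients but applies them to the wrong measure.
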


Very recently, B.~Liu and C-Y.~Shen \cite[Theorem 1.3]{LiuShen20} combined Orponen's approach with Bourgain discretized sum-product theorem to improve this, by showing that if $A\subset\R^2$ has Hausdorff dimension $\alpha\in (0,2)$ and is not contained in a line, then it spans a set of directions of dimension $\ge \alpha/2+c(\alpha)$, where $c(\alpha)>0$. They also obtained a ``pinned'' version, see \cite[Theorem 1.4]{LiuShen20}.  See also \cite[Theorem 1.6]{BLZ16} for an earlier, closely related ``single scale'' result.  As a corollary of our framework, we obtain a pinned version that is slightly different from that of Liu and Shen, and partially extend it  to higher dimensions. We note that Orponen's proof of Theorem \ref{thm:orponen-radial} features heavily in our argument via Theorem \ref{thm:orponen-radial}(see Appendix \ref{sec:spherical-projections} for more details).

\begin{theorem} \label{thm:radial-projections-dim}
Fix $d\ge 2$. Given $\alpha,\kappa\in (d-2,d)$, there is $\eta=\eta_d(\alpha,\kappa)>0$ such that the following holds.

Let $A,E\subset\R^d$ be Borel sets such that $\hdim(A)=\alpha$, $\hdim(E)=\kappa$ and $E$ is not contained in a hyperplane. Then there is $y\in E$ such that
\[
\hdim(e_y A) \ge \frac{(d-1)\alpha}{d} + \eta.
\]
\end{theorem}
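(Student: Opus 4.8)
The plan is to recognize the radial projection $e_y\colon \R^d\to S^{d-1}$, parametrized by the pin $y$, as a rank-$(d-1)$ member of the families to which Theorem \ref{thm:main-continuous-higher-rank} applies (take $k=d-1$), and to feed it the non-concentration hypothesis \eqref{eq:main-thm-cont-projective-decay-higher-rank}. After standard reductions (as in the proof of Theorem \ref{thm:main-distance-sets}) we may assume $A$ and $E$ are compact, disjoint and contained in the unit ball, and, since all the gains $\eta$ invoked below are continuous in the parameters, it is enough to fix $d-2<\alpha_0<\alpha$ and $d-2<\kappa_0<\kappa$, an $\alpha_0$-Frostman measure $\mu$ on $A$ and a $\kappa_0$-Frostman measure $\nu$ on $E$, and to find $y\in E$ with $\hdim(e_yA)\ge \frac{d-1}{d}\alpha_0+\eta$ for some $\eta=\eta_d(\alpha_0,\kappa_0)>0$, afterwards letting $\alpha_0\uparrow\alpha$ and $\kappa_0\uparrow\kappa$. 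For $x\neq y$ one computes $\ker\big(D_x e_y(x)\big)^\perp=(x-y)^\perp\in\mathbb{G}(d,d-1)$, so, writing $e_x(y)=(x-y)/|x-y|$ and using \eqref{eq:dist-hyperplane-line}, for every line $W=\R w\in\mathbb{G}(d,1)$,
\[
(x-y)^\perp\in\cV(W,r)\iff \big|\langle e_x(y),w\rangle\big|<r .
\]
Thus \eqref{eq:main-thm-cont-projective-decay-higher-rank} for the family $\{e_y\}$ says precisely that, for each $x$, the measure $e_x\nu_{\Lambda_x}$ does not concentrate near maximal $(d-2)$-subspheres of $S^{d-1}$.

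Next I would split according to whether $\nu$ charges an affine hyperplane. Suppose first $\nu(P)>0$ for some affine hyperplane $P$, and replace $\nu$ by $\nu_P$. If $\hdim(A\setminus P)=\alpha$, take $\mu$ supported on a compact subset of $A\setminus P$; for each $x\in\supp\mu$ the map $e_x|_P$ is a bi-Lipschitz diffeomorphism onto an open hemisphere of $S^{d-1}$ (with constant depending on $\dist(x,P)>0$), so $e_x\nu$ is $\kappa_0$-Frostman there, and covering the $r$-neighbourhood of a maximal $(d-2)$-subsphere by $\lesssim r^{-(d-2)}$ balls of radius $\sim r$ gives decay $\lesssim_x r^{\kappa_0-(d-2)}$ with $\kappa_0-(d-2)>0$. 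By the equivalence above this verifies \eqref{eq:main-thm-cont-projective-decay-higher-rank} with $\Lambda_x\equiv\supp\nu$ and exponent $\kappa_0-(d-2)$, so Theorem \ref{thm:main-continuous-higher-rank} produces $y\in\supp\nu\subset E$ with $\hdim(e_yA)\ge\frac{d-1}{d}\alpha_0+\eta_{d,d-1}(\kappa_0-(d-2),\alpha_0)$. If instead $\hdim(A\cap P)=\alpha$, I would pick any $y\in E\setminus P$ — nonempty exactly because $E$ is not contained in $P$ — and observe that $e_y$ is injective and bi-Lipschitz on the bounded set $A\cap P\subset P$, so that $\hdim(e_yA)\ge\hdim(e_y(A\cap P))=\hdim(A\cap P)=\alpha\ge\frac{d-1}{d}\alpha+\frac{d-2}{d}$, which suffices once $\eta\le(d-2)/d$.

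In the remaining (generic) case $\nu$ gives zero mass to every affine hyperplane, and here I would invoke Theorem \ref{thm:radial-positive-dim} with $k=d-1$; its hypotheses hold because $\alpha_0,\kappa_0>d-2$. It supplies a set $X'\subset\supp\mu$ with $\mu(X')\ge 1/2$ and, for each $x\in X'$, a set $K(x)\subset\supp\nu$ with $\nu(K(x))\ge1/2$ for which \eqref{eq:spherical-proj-non-concentration} holds; by the translation above this is exactly \eqref{eq:main-thm-cont-projective-decay-higher-rank} for $\{e_y\}$ with $\Lambda=E$, $\Lambda_x=K(x)$ and some $\kappa'=\kappa'(\alpha_0,\kappa_0)>0$. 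Since $\mu_{X'}$ is again $\alpha_0$-Frostman we have $\hdim(X')\ge\alpha_0$, so Theorem \ref{thm:main-continuous-higher-rank} yields $y\in\supp\nu\subset E$ with $\hdim(e_yA)\ge\hdim(e_yX')\ge\frac{d-1}{d}\alpha_0+\eta_{d,d-1}(\kappa',\alpha_0)$. Taking $\eta_d(\alpha,\kappa)$ to be the minimum of $(d-2)/d$ and the $\eta_{d,d-1}$'s produced above (continuous, hence continuous after taking the minimum and after $\alpha_0\uparrow\alpha$, $\kappa_0\uparrow\kappa$) completes the argument.

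I expect the main obstacle to be the hyperplane case. Unlike for distance sets in Theorem \ref{thm:main-distance-sets}, one cannot reduce to $P$ and induct on the ambient dimension, because the target exponent $\frac{(d-1)\alpha}{d}$ is \emph{increasing} in $d$, so a $(d-1)$-dimensional estimate would be too weak; the hypothesis that $E$ is not contained in a hyperplane is used precisely there, to provide a pin off $P$ when $A$ concentrates on $P$. Beyond that, essentially all the content of the generic case is borrowed from Theorem \ref{thm:radial-positive-dim} (the quantitative, higher-dimensional form of Orponen's spherical projection argument) and from Theorem \ref{thm:main-continuous-higher-rank}, so the only genuinely new work is assembling these pieces and checking the Frostman/great-subsphere bookkeeping.
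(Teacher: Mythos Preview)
Your argument is correct and follows essentially the same route as the paper: identify $V_x(y)=e_x(y)^\perp$, translate \eqref{eq:main-thm-cont-projective-decay-higher-rank} via \eqref{eq:dist-hyperplane-line} into great-subsphere non-concentration for $e_x\nu$, split on whether $\nu$ charges a hyperplane, and in the generic case feed Theorem \ref{thm:radial-positive-dim} into Theorem \ref{thm:main-continuous-higher-rank}; the paper merely organises the degenerate cases in a different order (it first reduces to $\mu$ giving zero mass to all hyperplanes, then splits on $\nu$), but the content is identical. Two small slips to fix: in your sub-case 1b the gain is $\alpha-\tfrac{d-1}{d}\alpha=\alpha/d$, not $(d-2)/d$ (your formula vanishes at $d=2$); and Theorem \ref{thm:main-continuous-higher-rank} requires maps into $\R^{d-1}$, so one should pass to a local chart on $S^{d-1}$ as the paper does with $\wt{e}_y(x)=\big((y_i-x_i)/(y_d-x_d)\big)_{i=1}^{d-1}$.
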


\begin{remark}
Compared to \cite[Theorem 1.4]{LiuShen20} (which only considers the planar case), the case $d=2$ of the theorem has the natural hypothesis that $E$, rather than $A$ as in \cite{LiuShen20}, is not contained in a line; this allows us to avoid the dichotomy in \cite[Theorem 1.4]{LiuShen20}. On the other hand, unlike in \cite[Theorem 1.4]{LiuShen20}, the gain $\eta$ depends on the dimensions of both $A$ and $E$.
\end{remark}

\begin{proof}[Proof of Theorem \ref{thm:radial-projections-dim}]
The proof is similar to that of Theorem \ref{thm:main-distance-sets}, but (in dimension $d>2$) we need to appeal to Theorem \ref{thm:main-continuous-higher-rank} instead.

As usual we may assume that $A$ and $E$ are disjoint compact subsets of the unit ball of positive Hausdorff measure in their dimensions.  We consider the parametrized family of smooth maps $\{e_y(x): y\in E\}$ defined on $A$. The range of these maps is $S^{d-1}$, but we can identify it with $\R^{d-1}$ in local coordinates. For example, by restricting and rotating $A$ and $E$, we may assume that $y_d-x_d\neq 0$ for all $x\in A,y\in E$, and work with the map $\wt{e}_y(x)=(y_i-x_i/(y_d-x_d))_{i=1}^{d-1}$ instead.

Our aim is to apply Theorem \ref{thm:main-continuous-higher-rank} (in the case $d=2$, we can apply Theorem \ref{thm:main-continuous}). It is easy to see that the map $V_x(y)$ featuring in Theorem \ref{thm:main-continuous-higher-rank} is given by
\[
V_x(y) = e_x(y)^{\perp}.
\]
Recalling \eqref{eq:dist-hyperplane-line}, this implies that, for $\theta\in S^{d-1}$,
\begin{equation} \label{eq:explicit-expression-distance}
d(V_x(y),\langle \theta\rangle ) = |\langle e_x(y), \theta\rangle | < r \Longleftrightarrow e_x(y)\in (\theta^{\perp})^{(r)}.
\end{equation}

Let $\mu,\nu$ be Frostman measures on $A,E$ of exponents $\alpha,\kappa$. Then
\begin{equation} \label{eq:Frostman-mu-nu}
\begin{split}
\mu(V^{(r)}) &\lesssim r^{\alpha-(d-2)}\\
\nu(V^{(r)}) &\lesssim r^{\kappa-(d-2)}
\end{split}
\end{equation}
for all $V\in \mathbb{A}(d,d-2)$ and $r\in (0,1]$. We may assume that $\mu$  assigns zero mass to all affine hyperplanes, for otherwise there is a hyperplane $P$ such that $\hdim(A\cap P)=\hdim(A)$ and, by assumption, there is a point $y\in E\setminus P$. It is then clear that
\[
\hdim(e_y(A\cap P))=\hdim(A\cap P)=\hdim(A),
\]
and we are done.

Suppose first that $\nu$ assigns positive mass to a hyperplane $P$. Then we can assume that $E\subset P$, and because $\mu(P)=0$, we may also assume that $A$ is disjoint from $P$. Using this and \eqref{eq:explicit-expression-distance}, it follows that, for $x\in A$,
\[
V_x\nu(\cV(\langle\theta\rangle,r))=\nu\left\{ e_x^{-1}\left((\theta^{\perp})^{(r)}\right)\right\}\le \nu\left( V_\theta^{(C_x r)}\right),
\]
where $V_\theta=e_x^{-1}(\theta^{\perp})\cap P\in \mathbb{A}(d,d-2)$, and the constant $C_x$ depends on the distance from $x$ to $P$. According to \eqref{eq:Frostman-mu-nu}, the hypotheses of Theorem \ref{thm:main-continuous-higher-rank} are met, and we get that there is $y\in E$ such that
\[
\hdim(e_y A) \ge \frac{(d-1)\hdim(A)}{d}+\eta,
\]
where $\eta=\eta(\kappa-(d-2),\alpha)$ is the number given by Theorem \ref{thm:main-continuous-higher-rank}.

Suppose now that $\nu$ gives zero mass to every hyperplane. We can then apply Theorem \ref{thm:radial-positive-dim} with $k=d-1$ to obtain a parameter $\kappa'=\kappa'_d(\kappa,\alpha)>0$ and a set $L$ with $\mu(L)\ge 1/2$, such that for each $x\in L$ there is a set $K(x)\subset A$ with $\nu(K(x))\ge 1/2$ so that
\begin{equation} \label{eq:hyperplane-decay}
e_x\nu_{K(x)}(H^{(r)}) \le O_{\nu,\mu}(1) \, r^{\kappa'}\quad\text{ for all }H\in \mathbb{G}(d,d-1), 0< r\le 1.
\end{equation}
Using \eqref{eq:explicit-expression-distance}, we deduce that
\[
V_x \nu_{K(x)}(\cV(\langle\theta\rangle,r)) = e_x\nu_{K(x)} ((\theta^{\perp})^{(r)}).
\]
Recalling \eqref{eq:hyperplane-decay}, this shows that the hypotheses of Theorem \ref{thm:main-continuous-higher-rank} are verified, with $\kappa'$ in place of $\kappa$. Hence, if $\eta=\eta_d(\kappa',\alpha)>0$ is the value provided by Theorem \ref{thm:main-continuous-higher-rank}, then there is $y\in\supp(\nu)\subset E$ such that
\[
\hdim(e_y(A)) \ge \frac{(d-1)\alpha}{d}+\eta.
\]
This is what we wanted to prove.
\end{proof}

\appendix
\section{Entropy of projections}

\label{sec:entropy-of-projections}

\subsection{Entropy basics}

In this section we prove Proposition \ref{prop:good-bound-from-below}. We start by reviewing some basic facts about (Shannon) entropy.   Recall that if $\nu\in\cP(\R^d$) and $\mathcal{A}$ is a finite partition of $\R^d$, up to a $\nu$-null set, then the entropy of $\nu$ with respect to $\mathcal{A}$ is given by
\[
H(\nu,\mathcal{A}) = -\sum_{A\in\mathcal{A}} \nu(A) \log(\nu(A)),
\]
with the usual convention $0\cdot \log 0=0$, and in the case $\mathcal{A}=\cD_m$, we write $H_m(\nu) = H(\nu,\cD_m)$.  Likewise, we define the conditional entropy with respect to the finite measurable partition $\mathcal{G}$ by
\[
H(\mu,\cA|\mathcal{G}) = \sum_{G\in\mathcal{G}:\mu(G)>0} \mu(G) H(\mu_G,\cA).
\]
It follows from the concavity of the logarithm that one always has
\begin{equation} \label{eq:entropy-gral-bounds}
H(\nu,\mathcal{A}|\mathcal{G}) \le \log|\mathcal{A}|.
\end{equation}
%For us, entropy will be a tool to estimate dimension: recall Lemma \ref{lem:entropy-to-Hausdorff-dim}.

The following are some further elementary properties of entropy that we will need to call upon:
\begin{enumerate}[(A)]
\item \label{enum:equivalent-partitions}  If $\cF, \cG$ have the property that each element of $\cF$ hits at most $N$ elements of $\cG$ and vice-versa, then
\[
|H(\mu,\cF)-H(\mu,\cG)| \le \log N.
\]
\item \label{enum:refining-partitions} If $\cG$ refines $\cF$ (that is, each element of $\cF$ is a union of elements in $\cG$), then
\[
H(\mu,\cF|\cG) = H(\mu,\cG)-H(\mu,\cF).
\]
\item \label{enum:concavity-of-entropy} Conditional entropy is concave as a function of the measure: for $t\in [0,1]$,
\[
H(t\mu+(1-t)\nu,\cF|\cG) \ge t H(\mu,\cF|\cG)+(1-t)H(\nu,\cF|\cG).
\]
\end{enumerate}

\subsection{Multiscale estimates for the entropy of projections}

The following proposition provides a lower bound for the entropy of a smooth image in terms of entropies of linear images of conditional measures at certain scales. Recall that we denote orthogonal projection onto $V\in \mathbb{G}(d,k)$ by $P_V$. Furthermore, if $A:\R^d\to\R^k$ is a linear map of rank $k$, we denote by $J_k(A)$ the absolute value of the determinant of $A|_{\ker(A)^{\perp}}: \ker(A)^{\perp}\to \R^k$.

\begin{prop} \label{prop:entropy-of-image-measure}
Fix $1\le k<d$. Let $\mu\in\cP([0,1)^d)$ and let $[A_i,B_i)_{i=1}^q$ be disjoint sub-intervals of $(0,m]$ such that $B_i\le 2 A_i$. Let $F:U\to \R^k$ be a $C^2$ map defined in a neighborhood of $\supp(\mu)$, such that $DF(x)$ has full rank $k$ for all $x\in\supp(\mu)$. Denote
\[
V(x) = \ker(DF(x))^{\perp}.
\]
Then
\begin{equation} \label{eq:lower-bound-entropy}
 H(F\mu,\cD_m) \ge - O_{F,d,k}(q)  + \sum_{i=1}^{q} \sum_{Q\in\cD_{A_i}} \mu(Q)  H\left(P_{V(x_Q)}\mu^Q,\cD_{B_i-A_i}\right),
\end{equation}
where  $x_Q$ is an arbitrary point in $Q$. The constant implicit in $O_{F,d,k}(q)$ depends only on $d$, $k$, $\|F\|_{C^2}$ and $\inf_{x\in\supp(\mu)} J_k(F'(x))$; in particular, it can be taken uniform in a $C^2$-neighborhood of $F$.
\end{prop}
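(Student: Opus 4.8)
The plan is to reduce the smooth statement to a scale-by-scale linearization, exploiting that on a dyadic cube $Q\in\cD_{A_i}$ of side $2^{-A_i}$, the map $F$ is well-approximated by its affine Taylor expansion $x\mapsto F(x_Q)+DF(x_Q)(x-x_Q)$, with error $O(\|F\|_{C^2}\,2^{-2A_i})\le O(\|F\|_{C^2})\cdot 2^{-B_i}$ because $B_i\le 2A_i$. This is precisely why the condition $B_i\le 2A_i$ is imposed: the quadratic error of linearization on a cube of resolution $A_i$ is controlled at resolution $B_i$, so the images $F\mu^Q$ and $(DF(x_Q))\mu^Q$ agree up to a bounded number of dyadic cells of $\cD_{B_i-A_i}$ (after rescaling $Q$ to the unit cube). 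For a linear map $A$ of rank $k$, $A\mu$ and $P_{V}\mu$ with $V=\ker(A)^\perp$ differ only by the invertible linear map $A|_V$, whose bi-Lipschitz constants are controlled by $\|F\|_{C^2}$ and $\inf_x J_k(F'(x))$; hence by property \eqref{enum:equivalent-partitions} their entropies at any scale agree up to an additive constant depending only on those quantities, $d$ and $k$.

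First I would set up the telescoping of entropy along the chosen scales. Using property \eqref{enum:refining-partitions}, for any single interval $[A_i,B_i)$ one has
\[
H(F\mu,\cD_{B_i}) - H(F\mu,\cD_{A_i}) = H(F\mu,\cD_{B_i}\mid \cD_{A_i}) = \sum_{Q'\in\cD_{A_i}(F\mu)} F\mu(Q')\, H\big((F\mu)_{Q'},\cD_{B_i}\big),
\]
where on the right I abuse notation and write $\cD_{A_i}$ for the dyadic partition of the target $\R^k$. Since the intervals $[A_i,B_i)$ are disjoint and contained in $(0,m]$, summing these increments and bounding the ``leftover'' increments of $H(F\mu,\cD_\bullet)$ on $(0,m]\setminus\bigcup_i[A_i,B_i)$ from below by $0$ (entropy of a finer partition is larger), I get
\[
H(F\mu,\cD_m)\ \ge\ \sum_{i=1}^q \Big(H(F\mu,\cD_{B_i}) - H(F\mu,\cD_{A_i})\Big)
\ \ge\ \sum_{i=1}^q \sum_{Q'\in\cD_{A_i}} F\mu(Q')\,H\big((F\mu)_{Q'},\cD_{B_i}\big) - O(q),
\]
the $O(q)$ absorbing the finitely many boundary corrections. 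It then remains to relate the conditional measure $(F\mu)_{Q'}$ in the target to the conditional measures $\mu^Q$ in the source. Pushing the decomposition $\mu=\sum_{Q\in\cD_{A_i}}\mu(Q)\mu_Q$ through $F$ and using concavity of conditional entropy (property \eqref{enum:concavity-of-entropy}), I can lower bound $H(F\mu,\cD_{B_i}\mid\cD_{A_i})$ by $\sum_{Q\in\cD_{A_i}}\mu(Q)\,H(F\mu_Q,\cD_{B_i}\mid\cD_{A_i})$ up to an $O(1)$ loss coming from the mismatch between the partition of the target into $\cD_{A_i}$-cells and the partition induced by the source cells $Q$ (each $Q$ of diameter $2^{-A_i}$ has image of diameter $O(\|F\|_{C^2}2^{-A_i})$, hitting $O_{d,k}(1)$ cells of $\cD_{A_i}$).

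Finally I would carry out the core estimate: for a fixed $Q\in\cD_{A_i}$, rescale $Q$ to the unit cube by the homothety $\mathrm{Hom}_Q$, so $\mu_Q$ becomes $\mu^Q$ and the relevant scale $B_i$ becomes $B_i-A_i$. Writing $F\circ \mathrm{Hom}_Q^{-1}$ and Taylor-expanding around $x_Q$, the map agrees on $[0,1)^d$ with the affine map $y\mapsto F(x_Q)+2^{-A_i}DF(x_Q)\,y$ up to an additive error $O(\|F\|_{C^2}2^{-2A_i}) = O(\|F\|_{C^2})\,2^{-A_i}2^{-B_i+A_i}$; after dividing by the $2^{-A_i}$ scaling this is an error $O(\|F\|_{C^2})2^{-(B_i-A_i)}$ at resolution $B_i-A_i$, hence changes $\cD_{B_i-A_i}$-entropy by $O_{d,k,\|F\|_{C^2}}(1)$ via property \eqref{enum:equivalent-partitions}. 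Then replacing the linear map $DF(x_Q)$ by the orthogonal projection $P_{V(x_Q)}$ costs only another additive $O_{d,k}(1)$ depending on $\|F\|_{C^2}$ and $\inf_x J_k(F'(x))$, since they differ by composition with an invertible $k\times k$ matrix of controlled condition number. Collecting the $O(1)$ losses over the $q$ intervals and the $\le 2^{kA_i}$ cubes $Q$ — note these per-cube losses are weighted by $\mu(Q)$ and sum to $O(1)$ per interval, so $O(q)$ total — yields \eqref{eq:lower-bound-entropy}.

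The main obstacle I anticipate is bookkeeping the additive $O(1)$ errors uniformly: one must check that each passage (linearization of $F$ on $Q$, replacing $DF(x_Q)$ by $P_{V(x_Q)}$, matching target-side cells with source-side cells, and the boundary terms from scales outside $\bigcup_i[A_i,B_i)$) contributes a constant depending only on $d$, $k$, $\|F\|_{C^2}$ and $\inf_{x}J_k(F'(x))$ — and crucially not on $m$, the cube $Q$, or the particular interval — so that the cumulative error is genuinely $O_{F,d,k}(q)$ and uniform in a $C^2$-neighborhood of $F$. The condition $B_i\le 2A_i$ is the linchpin that makes the linearization error negligible at the target resolution, and I would be careful that this is the only place the hypothesis is used.
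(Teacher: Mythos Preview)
Your proposal is correct and follows essentially the same route as the paper's proof: telescope $H(F\mu,\cD_m)$ into the conditional entropies $H(F\mu,\cD_{B_i}\mid\cD_{A_i})$, use concavity on the source-side decomposition $F\mu=\sum_{Q\in\cD_{A_i}}\mu(Q)\,F\mu_Q$, drop the conditioning at cost $O(1)$ per $i$ (since $F(Q)$ hits $O_{F,d,k}(1)$ cells of $\cD_{A_i}$), then linearize via Taylor (this is the paper's Lemma~\ref{lem:linear-to-nonlinear-entropy}, where $B_i\le 2A_i$ is used exactly as you say) and replace $DF(x_Q)$ by $P_{V(x_Q)}$ at $O(1)$ cost controlled by $\|F\|_{C^2}$ and $\inf_x J_k(F'(x))$. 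The only redundancy is your initial detour through the target-side conditioning $(F\mu)_{Q'}$ before invoking concavity on the source-side mixture; the paper skips this and applies concavity of conditional entropy directly to $\sum_Q \mu(Q)\,F\mu_Q$, which is slightly cleaner but equivalent.
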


The proof of the proposition depends on a linearization argument, which helps explain the assumption that $F$ is $C^2$ and has no singular points. The hypothesis $B_j\le 2A_j$ comes from linearization, and can be dropped if $F$ is already linear.

\begin{lemma} \label{lem:linear-to-nonlinear-entropy}
Under the assumptions of Proposition \ref{prop:entropy-of-image-measure}, if $Q\in\cD_{A_j}$ has positive $\mu$-measure and $x\in Q$, then
\begin{equation} \label{eq:linear-to-nonlinear-entropy}
\left| H\big(F(\mu_Q) ,\cD_{B_j}\big) - H\big(P_{V(x)}(\mu_Q),\cD_{B_j}\big) \right|\lesssim_{F,d} 1 ,
\end{equation}
\end{lemma}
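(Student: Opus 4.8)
The plan is a two-step comparison: first replace $F$ on the small cube $Q$ by its first-order Taylor polynomial at $x$, and then recognize that affine map as $P_{V(x)}$ composed with a bounded linear isomorphism and a translation. Fix $Q\in\cD_{A_j}$ with $\mu(Q)>0$ and $x\in Q$, put $A=DF(x)\colon\R^d\to\R^k$ (of rank $k$), and let $L(y)=F(x)+A(y-x)$ be the affine approximation.

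\textbf{Step 1: linearization.} Since $F\in C^2$ and $Q$ has diameter $\sqrt d\,2^{-A_j}$, Taylor's theorem gives $\|F-L\|_{L^\infty(Q)}\le\tfrac d2\|F\|_{C^2}2^{-2A_j}$, and the hypothesis $B_j\le 2A_j$ upgrades this to $\lesssim_{F,d}2^{-B_j}$. Thus $F\mu_Q$ and $L\mu_Q$ are the two marginals of the pushforward of $\mu_Q$ under $y\mapsto(F(y),L(y))$, a measure on $\R^k\times\R^k$ supported in the $O_{F,d}(2^{-B_j})$-neighbourhood of the diagonal. On that neighbourhood each cylinder over a $\cD_{B_j}$-cube in the first coordinate meets $O_{F,d}(1)$ cylinders over $\cD_{B_j}$-cubes in the second coordinate and vice versa, so property \ref{enum:equivalent-partitions} yields $|H(F\mu_Q,\cD_{B_j})-H(L\mu_Q,\cD_{B_j})|\lesssim_{F,d}1$.

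\textbf{Step 2: from the affine map to $P_{V(x)}$.} Writing $\bar A=A|_{V(x)}\colon V(x)\to\R^k$, we have $A=\bar A\circ P_{V(x)}$, hence $L=T\circ\bar A\circ P_{V(x)}$ for the translation $T$ of $\R^k$ by $F(x)-A(x)$, and so $L\mu_Q=T_*\bar A_*(P_{V(x)}\mu_Q)$. A translation changes $H(\cdot,\cD_{B_j})$ by at most $O_k(1)$ (a translated dyadic cube meets $\le 2^k$ dyadic cubes), again by property \ref{enum:equivalent-partitions}. For $\bar A$: its largest singular value is $\le\|F\|_{C^2}$, while $|\det\bar A|=J_k(F'(x))$ is the product of its singular values, so its smallest singular value is $\ge J_k(F'(x))\|F\|_{C^2}^{-(k-1)}$, which is bounded below in terms of $\|F\|_{C^2}$ and $\inf_{\supp\mu}J_k(F')$; hence $\bar A$ is a bi-Lipschitz linear bijection with controlled constants, and therefore changes entropy at scale $2^{-B_j}$ by $O_{F,d,k}(1)$ via property \ref{enum:equivalent-partitions}. (Identifying $V(x)$ isometrically with $\R^k$, and comparing $\cD_{B_j}$ there with $\cD_{B_j}$ on $\R^d$ restricted to $V(x)$, costs another $O_d(1)$.) Chaining Steps 1 and 2 gives \eqref{eq:linear-to-nonlinear-entropy} with implicit constant depending only on $d$, $k$, $\|F\|_{C^2}$ and $\inf_{\supp\mu}J_k(F')$.

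\textbf{Where the difficulty lies.} Conceptually this is routine, but the subtlety is that all constants must be uniform in a $C^2$-neighbourhood of $F$: this forces one to quantify the invertibility of $\bar A$ through the lower bound on $J_k$ (not merely $\det\bar A\ne 0$), and to measure the Taylor error against the \emph{coarse} scale $2^{-B_j}$, which is exactly what $B_j\le 2A_j$ buys. The one non-cosmetic input is the standard entropy-stability fact that measures coupled so that mass moves $\le O(2^{-m})$ have $O(1)$-close entropies at scale $2^{-m}$, used in Step 1; it is an immediate consequence of property \ref{enum:equivalent-partitions} applied on a neighbourhood of the diagonal.
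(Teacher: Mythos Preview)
Your proof is correct and follows essentially the same two-step strategy as the paper: compare $F$ to its affine approximation $L$ via Taylor's theorem and the hypothesis $B_j\le 2A_j$, then compare $L$ to $P_{V(x)}$ using the factorization $DF(x)=\bar A\circ P_{V(x)}$ with $\bar A$ a bi-Lipschitz isomorphism controlled by $\|F\|_{C^2}$ and $\inf J_k(F')$. The only cosmetic difference is that the paper phrases Step~1 by pulling the partitions $F^{-1}(\cD_{B_j})$ and $L^{-1}(\cD_{B_j})$ back to $Q$ and applying property~\eqref{enum:equivalent-partitions} there, whereas you push forward to $\R^k\times\R^k$ and argue on a neighbourhood of the diagonal; these are equivalent formulations of the same counting bound.
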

\begin{proof}
All implicit constants are allowed to depend on $d$. The claim can be rewritten as
\[
\left| H(\mu_Q ,F^{-1}(\cD_{B_j})) - H(\mu_Q,P_{V(x)}^{-1}(\cD_{B_j})) \right|\lesssim_F 1.
\]
Let $L_{x}(z)= F(x)+DF(x)\cdot(z-x)$ be the affine approximation to $F$ centred at $x$. Note that
\[
DF(x)|_{V(x)} P_{V(x)} = DF(x),
\]
and $DF(x)|_{V(x)}$ is an isomorphism from $V(x)$ to $\R^k$. By the assumption that $F$ has no singular points, the determinant of this isomorphism (given by $J_k(DF(x))$ in absolute value) is bounded away from $0$ and $\infty$ for $x\in \supp(\mu)$. Hence it follows from Property \eqref{enum:equivalent-partitions} that
\[
\left| H(\mu_Q ,L_x^{-1}(\cD_{B_j})) - H(\mu_Q,P_{V(x)}^{-1}(\cD_{B_j})) \right|\lesssim_F 1.
\]
So it is enough to show that
\begin{equation} \label{eq:equivalent-partitions}
\left| H(\mu_Q ,F^{-1}(\cD_{B_j})) - H(\mu_Q,L_x^{-1}(\cD_{B_j})) \right|\lesssim_F 1.
\end{equation}
This is just a consequence of Taylor's formula. Indeed, since $F$ is $C^2$,
\[
|F(z)-L_x(z)| \le  O_{\|F\|_{C^2}}(|z-x|^2) \le O_{\|F\|_{C^2}}(2^{-B_j})\quad\text{for }z\in Q,
\]
using the assumption $B_j\le 2 A_j$. This implies that each element of $F^{-1}(\cD_{B_j})$ hitting $Q$ intersects $\lesssim_{F} 1$ elements of $L_x^{-1}(\cD_{B_j})$, and vice-versa. Property \eqref{enum:equivalent-partitions} above yields that \eqref{eq:equivalent-partitions} is verified, and this establishes the claim \eqref{eq:linear-to-nonlinear-entropy}.
\end{proof}

We can now conclude the proof of Proposition \ref{prop:entropy-of-image-measure}.
\begin{proof}[Proof of Proposition \ref{prop:entropy-of-image-measure}]
\begin{align*}
H(F\mu,\cD_m) &\overset{\eqref{enum:refining-partitions} }{\ge} -H(F\mu,\cD_0) + \sum_{i=1}^{q}  H(F\mu,\cD_{B_i}|\cD_{A_i}) \\
&\overset{\eqref{eq:entropy-gral-bounds}}{\ge} -O_{F,d}(1)+\sum_{i=1}^{q} H\left(\sum_{Q\in\cD_{A_i}} \mu(Q) F(\mu_Q) ,\cD_{B_i}|\cD_{A_i}\right)\\
&\overset{\eqref{enum:concavity-of-entropy}}{\ge} -O_{F,d}(1) + \sum_{i=1}^{q} \sum_{Q\in\cD_{A_i}} \mu(Q) H(F(\mu_Q) ,\cD_{B_i}|\cD_{A_i})\\
&\overset{\eqref{enum:refining-partitions}, \eqref{eq:entropy-gral-bounds} }{\ge}- O_{F,d}(q)+ \sum_{i=1}^{q} \sum_{Q\in\cD_{A_i}} \mu(Q) H(F(\mu_Q),\cD_{B_i})\\
&\overset{\eqref{eq:linear-to-nonlinear-entropy}}{\ge} -O_{F,d}(q) + \sum_{i=1}^{q} \sum_{Q\in\cD_{A_i}} \mu(Q)\left( H(P_{V(x_Q)}\mu_Q,\cD_{B_i}) - O_F(1)\right)\\
&\overset{P_{V(x_Q)} \text{ linear}}{=} -O_{F,d}(q)+\sum_{i=1}^{q} \sum_{Q\in\cD_{A_i}} \mu(Q) H(\Pi_{V(x_Q)}\mu^Q ,\cD_{B_i-A_i}).
\end{align*}
An inspection of the proof shows that the constants depending on $F$ do so in the way described in the statement.
\end{proof}

We can now complete the proof of Proposition \ref{prop:good-bound-from-below}, which can be seen as a ``robust'' version of Proposition \ref{prop:entropy-of-image-measure}; we repeat the statement for the reader's convenience.
\begin{prop}
Under the same assumptions and notation of Proposition \ref{prop:entropy-of-image-measure}, if $\nu\in\cP(\R^d)$ satisfies $\nu\le \Delta\mu$, then
\[
 H_m(F\nu) \ge - O_{F,d}(q)   + \sum_{i=0}^{q-1} \sum_{Q\in\cD_{A_i}} \nu(Q)  H_{B_i-A_i}^{m \Delta}\left(P_{V(x_Q)}\mu^Q\right).
\]
\end{prop}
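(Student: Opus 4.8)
The plan is to deduce Proposition~\ref{prop:good-bound-from-below} from Proposition~\ref{prop:entropy-of-image-measure} by the same kind of argument that relates robust entropy to ordinary entropy, applied scale-by-scale. Fix $\nu\le\Delta\mu$. For each $i$ and each $Q\in\cD_{A_i}$ with $\mu(Q)>0$, observe that the conditional measure $\nu_Q$ (when $\nu(Q)>0$) satisfies $\nu_Q\le \nu(Q)^{-1}\nu|_Q\le \nu(Q)^{-1}\Delta\,\mu|_Q = \Delta\,\mu(Q)/\nu(Q)\cdot \mu_Q$; after pushing forward by the \emph{linear} map $P_{V(x_Q)}$ this becomes $P_{V(x_Q)}\nu_Q \le (\Delta\mu(Q)/\nu(Q))\, P_{V(x_Q)}\mu_Q$, and hence, scaling $P_{V(x_Q)}\mu^Q$ appropriately, $P_{V(x_Q)}\nu^Q \le (\Delta\mu(Q)/\nu(Q))\,P_{V(x_Q)}\mu^Q$. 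The point is that the comparison constant can be large, so $H(P_{V(x_Q)}\nu^Q,\cD_{B_i-A_i})$ need not be bounded below by anything good; but if we only want the bound for \emph{one} suitable $\nu$, we are allowed to choose, for each $Q$, a near-minimizer in the definition of robust entropy.

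Concretely, first I would apply Proposition~\ref{prop:entropy-of-image-measure} not to $\mu$ but to $\nu$ itself (this is legitimate: $\nu\in\cP([0,1)^d)$, the intervals and $F$ are the same), obtaining
\[
H_m(F\nu) \ge -O_{F,d,k}(q) + \sum_{i=1}^q \sum_{Q\in\cD_{A_i}} \nu(Q)\, H\!\left(P_{V(x_Q)}\nu^Q,\cD_{B_i-A_i}\right).
\]
Now I need to bound each $H(P_{V(x_Q)}\nu^Q,\cD_{B_i-A_i})$ from below by $H^{m\Delta}_{B_i-A_i}(P_{V(x_Q)}\mu^Q)$. Since $\nu^Q$ is not under our control, this requires re-running the proof of Proposition~\ref{prop:entropy-of-image-measure} with a twist: at the step where concavity of conditional entropy is used to pass from $H(\sum_Q \mu(Q)F(\mu_Q),\cD_{B_i}\mid\cD_{A_i})$ to $\sum_Q \mu(Q) H(F(\mu_Q),\cD_{B_i}\mid\cD_{A_i})$, instead replace $F(\mu_Q)$ by $F(\lambda_Q)$ for a measure $\lambda_Q\le (\Delta\mu(Q)/\nu(Q)) \mu_Q$ that is a near-minimizer of $H(P_{V(x_Q)}\cdot,\cD_{B_i-A_i})$, and observe that the measure $\sum_Q \nu(Q) \lambda_Q$ is $\le \Delta\mu$ (hence an admissible competitor after pushing forward), while the linearization lemma (Lemma~\ref{lem:linear-to-nonlinear-entropy}) continues to apply to $\lambda_Q$ because it only used $\supp(\lambda_Q)\subset Q$ and the $C^2$ bound on $F$, not any lower bound on the measure.

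So the cleanest route is: apply Proposition~\ref{prop:entropy-of-image-measure} directly to an auxiliary measure $\nu' = \sum_i \sum_{Q\in\cD_{A_i}} \text{(weight)}\cdot \lambda_{Q}$ built from such near-minimizers — but since the intervals overlap in their $A_i$'s only through the partition structure, the honest thing is to redo the five-line display in the proof of Proposition~\ref{prop:entropy-of-image-measure} verbatim with $F\nu$ on the left, using at the concavity step the measures $\lambda_Q\le (m\Delta)\,\mu_Q$ whenever $\nu(Q)\ge \mu(Q)/m$ (there are at most finitely many ``bad'' $Q$ with smaller $\nu$-mass, contributing total $\nu$-mass $\le |\cD_{A_i}\cap\supp\mu|/m$... actually one must be slightly careful here). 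The cleaner bookkeeping: for $Q$ with $\nu(Q)>0$ we have $P_{V(x_Q)}\nu^Q\le \Delta\,\tfrac{\mu(Q)}{\nu(Q)} P_{V(x_Q)}\mu^Q$; discard from the inner sum those $Q$ with $\nu(Q) < \mu(Q)/m$, whose total $\nu$-contribution across one scale is negligible (bounded by $\sum_Q \mu(Q)/m \le 1/m$, costing at most $O(\log|\cD_{B_i}|/m)=O(1)$ per scale by \eqref{eq:entropy-gral-bounds}); for the remaining $Q$, $\Delta\mu(Q)/\nu(Q)\le m\Delta$, so $H(P_{V(x_Q)}\nu^Q,\cD_{B_i-A_i})\ge H^{m\Delta}_{B_i-A_i}(P_{V(x_Q)}\mu^Q)$ directly from the definition of robust entropy (robust entropy is a minimum over a \emph{larger} family when $\Delta$ is larger, hence smaller, so $H(P_{V(x_Q)}\nu^Q,\cdot)\ge \inf\{H(\lambda,\cdot):\lambda\le m\Delta\,P_{V(x_Q)}\mu^Q\} = H^{m\Delta}(P_{V(x_Q)}\mu^Q,\cdot)$). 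Summing over $i$ and absorbing the $O(q)$ losses gives the claim.

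The main obstacle is the bookkeeping around cubes $Q$ of very small $\nu$-measure (where the comparison constant $\Delta\mu(Q)/\nu(Q)$ blows up past $m\Delta$): one must verify that excising them costs only $O_{F,d}(q)$ total, which follows because their combined $\nu$-weight per scale is $\le 1/m$ and each missing term is bounded in absolute value by $\log|\cD_{B_i-A_i}|\lesssim m$. Everything else is a routine re-run of the proof of Proposition~\ref{prop:entropy-of-image-measure} — in particular Lemma~\ref{lem:linear-to-nonlinear-entropy} is used exactly as before — together with the trivial monotonicity of robust entropy in the parameter $\Delta$.
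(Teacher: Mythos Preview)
Your proposal is correct and, once you settle on the ``cleaner bookkeeping'' in the final paragraph, it is exactly the paper's proof: apply Proposition~\ref{prop:entropy-of-image-measure} to $\nu$, split the inner sum at each scale into cubes with $\nu(Q)\ge \mu(Q)/m$ (where $\nu^Q\le m\Delta\,\mu^Q$, so the ordinary entropy of $P_{V(x_Q)}\nu^Q$ dominates the $m\Delta$-robust entropy of $P_{V(x_Q)}\mu^Q$) and cubes with $\nu(Q)<\mu(Q)/m$ (whose total $\nu$-mass is $<1/m$ per scale, contributing at most $k(B_i-A_i+O(1))/m=O_{d}(1)$ to the right-hand side). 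The earlier detour through auxiliary near-minimizers $\lambda_Q$ is unnecessary and can be dropped.
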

\begin{proof}
Fix $i\in \{0,\ldots,q-1\}$, and note that
\begin{equation} \label{eq:low-density-has-small-measure}
\sum\{  \nu(Q): Q\in\cD_{A_i}, \nu(Q) < \tfrac{1}{m} \mu(Q)\} < \tfrac{1}{m}.
\end{equation}
Suppose $\nu(Q) \ge \tfrac{1}{m}\mu(Q)>0$ for a given $Q\in\cD_{A_i}$. Then
\[
\nu_Q(S) = \frac{\nu(Q\cap S)}{\nu(Q)}  \le \frac{\Delta \mu(Q\cap S)}{\tfrac{1}{m}\mu(Q)} = m \Delta \mu_Q(S)
\]
for any Borel set $S\subset [0,1)^2$. We deduce that $\Pi\nu_Q\le m\Delta \Pi\mu_Q$ for any linear map $\Pi$, and hence
\[
H_{B_i-A_i}\left(P_{V(x_Q)}\nu^Q\right) \ge  H^{m \Delta}_{B_i-A_i}\left(P_{V(x_Q)}\mu^Q\right),
\]
always assuming that $\nu(Q)\ge \tfrac{1}{m}\mu(Q)>0$ and $Q\in \cD_{A_i}$.

On the other hand, for fixed $i$, from \eqref{eq:low-density-has-small-measure} and the trivial bound $H_p(\cdot) \le k(p+O(1))$ for measures supported on a ball of radius $O(1)$ in $\R^k$, we get
\[
\sum_{Q\in\cD_{A_i}:\nu(Q)<\tfrac{1}{m} \mu(Q)} \nu(Q)  H_{B_i-A_i}^{m \Delta}\left(P_{V(x_Q)}\mu^Q\right) \le k(B_i-A_i+O_d(1))/m.
\]
Splitting (for each $i$) the sum $\sum_{Q\in\cD_{A_i}}$ in Proposition \ref{prop:entropy-of-image-measure} into the cubes with $\nu(Q) \ge \tfrac{1}{m}\mu(Q)$ and $\nu(Q)< \tfrac{1}{m}\mu(Q)$, we get the desired result.
\end{proof}

\section{Spherical projections}
\label{sec:spherical-projections}

In this section we prove Theorem \ref{thm:radial-positive-dim}. In fact, we will prove the following more quantitative statement:
\begin{theorem} \label{thm:radial-positive-dim-app}

Fix $1\le k<d$. Given $\kappa,\alpha>0$ there are $c,\eta>0$ (depending continuously on $\alpha,\kappa$) such that the following holds.

Let $\mu,\nu\in\cP(B^d(0,1))$ be measures satisfying decay conditions
\begin{align*}
\mu(V^{(r)}) &\le C_\mu r^{\kappa},\\
\nu(V^{(r)}) &\le C_\nu r^{\alpha},
\end{align*}
for all $V\in \mathbb{A}(d,k-1)$ and $0<r\le 1$. (If $k=1$, $V^{(r)}$ is a ball of radius $r$.) Fix $\e\in(0,1)$ and suppose there is $\tilde{\delta}>0$ such that
\begin{equation} \label{eq:small-plate-base}
\nu(V^{(\tilde{\delta})}) \le c\e, \quad V\in\mathbb{A}(d,k).
\end{equation}
Then  for all $x$ in a set $E$ of $\mu$-measure $\ge 1-\e$ there is a set $K(x)$ with $\nu(K(x))\ge 1-\e$ such that
\begin{equation} \label{eq:spherical-proj-non-concentration-app}
e_x(\nu_{K(x)})(W^{(r)}) \le r^\eta \quad r\in (0,r_0], W\in \mathbb{G}(d,k),
\end{equation}
where $r_0>0$ depends only on $d, C_\mu, C_\nu, \alpha, \tilde{\delta},\e$.

Finally, the set $\{ (x,y) : x\in E, y\in K(x) \}$ is compact.
\end{theorem}

Theorem \ref{thm:radial-positive-dim} follows taking $\e=1/2$ and observing that, by compactness, if $\nu$ gives zero mass to all affine $k$-planes, then there is $\delta_0$ such that \eqref{eq:small-plate-base} holds. We follow closely the ideas from \cite[Section 2]{Orponen19}, although there are some differences: we work in arbitrary dimension, while Orponen works only in the plane. Also, Orponen obtains a single $x$ such that \eqref{eq:spherical-proj-non-concentration-app} holds.  On the other hand, some aspects of the proof are simplified; in particular, we avoid having to argue by induction in the scale, which reduces the number of parameters to keep track of in the proof (I thank Hong Wang for suggesting this simplification and allowing me to include it in this article).

The proof of the theorem depends on a single-scale variant that we state and prove first. By a $k$-plate in $\R^d$ we mean a set of the form
\[
W^{(\delta)} \cap B^d(0,1)
\]
with $W\in \mathbb{A}(d,k)$. In particular, $0$-plates are balls and $1$-plates are tubes (intersected with $B(0,1)$).  We refer to $\delta$ as the width of the plate, and denote the family of all $k$-plates of width $\delta$ intersecting a set $E$ by $\mathcal{T}_k(E,\delta)$.

\begin{prop} \label{prop:radial-single-scale}
Given $\kappa,\alpha>0$ and $1\le k\le d-1$, the following holds if $\eta\le\eta_0(\kappa,\alpha)$, and $\delta$ is small enough depending on $C_\mu, C_\nu$ appearing below and all the other parameters.

Let $\nu,\mu\in\cP(B^d(0,1))$ be measures satisfying decay conditions
\begin{align*}
\mu(V^{(r)}) &\le C_\mu r^{\alpha} & (V\in \mathbb{A}(d,k-1), 0<r\le 1), \\
\nu(V^{(\delta)}) &\le C_\nu\delta^{\kappa} & (V\in \mathbb{A}(d,k-1)).
\end{align*}

Then there exist:
\begin{itemize}
\item A set $E\subset\supp(\mu)$ with $\mu(\R^d\setminus E)\le \delta^{\kappa\alpha/2}$,
\item For each $x\in E$, a set $P(x)$, which is either empty or a $k$-plate in $\cT_k(x,\delta^{\eta/2})$,
\end{itemize}
such that $\{ (x,y): x\in E, y\in \supp\nu\setminus P(x)\}$ is compact, and
\[
\nu(W)\le \delta^{\eta} \quad\text{for all } W\in\cT_k(x,\delta)\cap \cT_k(\supp(\nu)\setminus P(x),\delta).
\]
\end{prop}
In turn, the proposition relies on the following lemma stating that all $\delta$-plates of too large measure are contained in a relatively small number of relatively thin plates.
\begin{lemma} \label{lem:structure-bad}
Let $\nu\in\cP(B^d(0,1))$ satisfy $\nu(W)\le  C_\nu\delta^{\kappa}$ for all $(k-1)$-plates $W$ of width $\delta$. Fix $\eta>0$.

Then there exists a family of $\lesssim_d C_\nu \delta^{-\eta}$ $k$-plates $T_j$ of width $\delta^{\kappa-2\eta}$, such that any $k$-plate $W$ of width $\delta$ with $\nu(W)\ge \delta^{\eta}$ is contained in one of the plates $T_j$.
\end{lemma}
\begin{proof}
We construct a sequence $Y_1,\ldots, Y_M$ of $k$-plates of width $\delta$ as follows. To begin, pick (if it exists) a $k$-plate $Y_1$ of width $\delta$ such that $\nu(Y_1)\ge\delta^\eta$. Now suppose $Y_1,\ldots,Y_m$ are $k$-plates of width $\delta$ such that:
\begin{enumerate}
\item $\nu(Y_i) \ge \delta^\eta$,
\item $\nu(Y_i\cap Y_j) \le \delta^{2\eta}/2$ for $1\le i<j\le m$.
\end{enumerate}
If there exists $Y_{m+1}$ such that the collection $(Y_1,\ldots,Y_{m+1})$ still satisfies properties (1)-(2), we add it to the list; otherwise we stop. A standard $L^2$ argument implies that we have to stop after $\le 2\delta^{-\eta}$ steps. Indeed, if $f=\sum_{i=1}^m \mathbf{1}_{Y_i}$, then
\begin{align*}
(\sum_i \nu(Y_i))^2 &=  \left(\int f\,d\nu\right)^2 \le  \int f^2\,d\nu \\
&=   \sum_{1\le i,j\le m} \nu(Y_i\cap Y_j) < \left(\sum_{i}\nu(Y_i)\right) + m^2 \delta^{2\eta}/2.
\end{align*}
Let $S=\sum_{i=1}^m \nu(Y_i)\ge m\delta^\eta$. We have seen that $S^2-S< m^2\delta^{2\eta}/2$. If $m > 2\delta^{-\eta}$, then $S^2-S \ge S^2/2$, and we deduce that
\[
m^2 \frac{1}{2}\delta^{2\eta} > \frac{1}{2}(\sum_i \nu(Y_i))^2 \ge \frac{1}{2} m^2 \delta^{2\eta},
\]
which is a contradiction. Let, then, $(Y_i)_{i=1}^M$ be the final family so obtained, with $M\le 2\delta^{-\eta}$.

Now fix a $k$-plate $W$ of width $\delta$ such that $\nu(W)\ge \delta^{\eta}$. Hence
\[
\nu(W\cap Y_j) \ge \delta^{2\eta}/2 \quad\text{ for some } Y_j,
\]
for otherwise we could add $W$ to the list of $Y_i$. On the other hand, $W\cap Y_j$ is contained in a box of size at most a constant $C_d$ times
\[
\underbrace{1\times \cdots \times 1}_{k-1 \text{ times}} \times \delta /\angle(W,Y_j) \times \underbrace{\delta\times \cdots \times\delta}_{d-k \text{ times}},
\]
where $\angle(\cdot,\cdot)$ denotes the largest principal angle between the $k$-planes determining the corresponding plates. Hence the assumption on $\nu$ yields
\[
\nu(W\cap Y_j) \lesssim_d \frac{C_\nu\delta^{\kappa}}{\angle(W,Y_j)}.
\]
Comparing the upper and lower bounds on $\nu(W\cap Y_j)$, we deduce that $\angle(W,Y_j)\lesssim_d  C_\nu\delta^{\kappa-2\eta}$ and, therefore, $W$ is contained in the plate $T_j$ centered at (the plane defining) $Y_j$ but of width $\lesssim_d C_\nu \delta^{\kappa-2\eta}$. This is what we wanted to prove.
\end{proof}

\begin{proof}[Proof of Proposition \ref{prop:radial-single-scale}]
Let
\[
\bad = \{ x\in B^d(0,1): \nu(W) \ge \delta^{\eta} \text{ for some } W\in\cT_k(x,\delta)\}.
\]
Let $T_j$ be the plates provided by Lemma \ref{lem:structure-bad}, and define
\[
\bad\bad = \{ x\in \bad: x\in T_i\cap T_j \text{ for some }i,j \text{ such that } \angle(T_i,T_j) \ge \delta^\eta \}.
\]
Let $r= \delta^{k-2\eta}$. As before, $T_i\cap T_j$ is contained in the union of $\lesssim_d 1$ boxes of size
\[
\underbrace{1\times \cdots \times 1}_{k-1 \text{ times}} \times r /\angle(T_i,T_j) \times \underbrace{r\times \cdots \times r}_{d-k \text{ times}},
\]
and hence, now using the non-concentration assumption on $\mu$,
\begin{align*}
\mu(\bad\bad) &\le \sum\{\mu(T_i\cap T_j):  \angle(T_i,T_j) \ge \delta^\eta\}\\
&\lesssim_d C_\mu C_\nu\delta^{-3\eta} \delta^{(\kappa-2\eta)\alpha} = C_\mu C_\nu\delta^{\kappa\alpha-\eta(3+2\kappa)}.
\end{align*}
Hence, if $\eta$ is small enough in terms of $\kappa,\alpha$, and $\delta$ is small enough in terms of $C_\mu, C_\nu$ and $d$, then
\[
\mu(\bad\bad) \le \delta^{\kappa\alpha/2}/2.
\]
Let $E=\supp(\mu)\setminus\bad\bad$, and fix $x\in E$. If $x\notin\bad$, we  can just take $P(x)=\varnothing$, so assume that $x\in\bad$. By Lemma \ref{lem:structure-bad},  $\bad$ is covered  by the $2 \delta^{-\eta}$ plates $T_j\in\cT_k(E,O(C_\nu)\delta^{\kappa-2\eta})$. Let $E_j=E\cap T_j\setminus  \cup_{i=1}^{j-1} T_i$, and for $x\in E_j$ let $P(x)$ be the $\delta^{\eta/2}$-plate centered at $T_j$. By passing to a subset of $E$, we can ensure that $E\cap\bad$ and the $E_j$ are compact, and still $\mu(\R^d\setminus E)\le \delta^{\kappa\alpha/2}$. This ensures that $\{ (x,y): x\in E, y\notin P(x)\}$ is compact.

Now fix $x\in E_j$ and suppose $W\in\cT_k(x,\delta)\cap\cT_k(\supp(\nu)\setminus P(x),\delta)$. Then, provided $\eta$ is taken small enough that $\kappa-2\eta\ge\eta$ and $\delta$ is small, the plate $W$ cannot be contained in any of the plates $T_i$ making an angle smaller than $\delta^\eta$ with $T_j$. Since $x\in T_j\setminus\bad\bad$, we conclude that the plate $W$ cannot be contained in any of the $T_i$. Lemma \ref{lem:structure-bad} now implies that $\nu(W)\le \delta^\eta$, completing the proof.
\end{proof}

We can now complete the proof of Theorem \ref{thm:radial-positive-dim-app}.
\begin{proof}[Proof of Theorem \ref{thm:radial-positive-dim-app}]
Let $\eta>0$ be the number given by Proposition \ref{prop:radial-single-scale}. Making $\eta$ slightly smaller if needed, we may assume that $\eta/2=2^{-N}$ for some integer $N$. We will prove the claim with $\eta/2$ in place of $\eta$.  We take $c=1/(2N)$, so by assumption there is $\delta_0>0$ such that $\nu(W)\le \e/(2N)$ for all $k$-plates $W$ of width $\delta_0^{\eta/2}$. Making $\delta_0$ smaller if needed, we can also ensure that
\begin{equation} \label{eq:sum-mass-tubes-small}
\sum_{n=1}^\infty \delta_0^{2^n\cdot \eta}  < \e/2.
\end{equation}
Finally, we also take $\delta_0$ small enough that Proposition \ref{prop:radial-single-scale} applies to all $\delta\le \delta_0$.

Write $\delta_n=\delta_0^{2^n}$. For each $n$, let $E_n$ and $P_n(x), x\in E_n$ be the sets provided by Proposition \ref{prop:radial-single-scale} for the scales $\delta_n$. Let
\begin{align*}
E &= \cap_{n=0}^\infty E_n,\\
K(x) &= \supp(\nu)\setminus \cup_{n=0}^\infty P_n(x).
\end{align*}
Then $\{ (x,y):x\in E, y\in K(x)\}$ is compact as an intersection of compact sets, and $\nu(E)\ge 1-\e$ by \eqref{eq:sum-mass-tubes-small}, taking $\eta<\alpha\kappa/2$. Fix $x\in E$ for the rest of the proof.

By Proposition \ref{prop:radial-single-scale}, and choosing for each $r<\delta_1$ the $n$ such that $\delta_n^2=\delta_{n-1}< r\le\delta_n$, we get that $\nu(K(x)\cap W)\le r^{\eta/2}$. This yields \eqref{eq:spherical-proj-non-concentration-app}.

It remains to show that $\nu(K(x))\ge 1-\e$. By construction, $\nu(P_n(x))\le \e/(2N)$ for all $n\le N$. Recall that $\eta/2=2^{-N}$, and hence $\delta_n^{\eta/2}=\delta_{n-N}\le \delta_1$ for all $n>N$. Applying Proposition \ref{prop:radial-single-scale} at scale $\delta_{n-N}$ with $P_n(x)$ in place of $W$,  we get
\[
\nu(P_n(x)\setminus \cup_{j=1}^{n-1} P_j(x)) \le \nu(P_n(x)\setminus P_{n-N}(x)) \le \delta_{n-N}^\eta\quad\text{for all }n>N.
\]
Adding up and recalling \eqref{eq:sum-mass-tubes-small}, we see that indeed $\nu(K(x))\ge 1-\e$, and this completes the proof.
\end{proof}

%\bibliographystyle{plain}
%\bibliography{nonlinearbourgain}

\end{document}